\documentclass{amsart}

\usepackage{amscd,amsmath,amssymb,amsfonts,bbm, calligra, xspace}
\usepackage[T1]{fontenc}
\usepackage{lmodern}
\usepackage{mathtools}
\usepackage{enumerate}
\usepackage{multicol}
\usepackage[all]{xy}
\usepackage{mathrsfs}
\usepackage{footmisc}
\usepackage[unicode,pdfborder={0 0 0},final]{hyperref}

\newtheorem{thm}{Theorem}[section]
\newtheorem{prop}[thm]{Proposition}

\newtheorem{lem}[thm]{Lemma}
\newtheorem{cor}[thm]{Corollary}

\theoremstyle{definition}

\theoremstyle{remark}
\newtheorem{rem}[thm]{Remark}
\newtheorem{rems}[thm]{Remarks}

\newtheorem{Step}{Step}

\numberwithin{equation}{section}

\newcommand{\Id}{\mathrm{Id}}
\newcommand{\Rr}{\mathrm{R}}
\newcommand{\Cl}{\mathrm{Cl}}

\newcommand{\Grass}{\mathrm{Grass}}
\newcommand{\Bl}{\mathrm{Bl}}
\newcommand{\MO}{\mathrm{MO}}
\newcommand{\BO}{\mathrm{BO}}

\newcommand{\Ker}{\mathrm{Ker}}

\newcommand{\cl}{\mathrm{cl}}

\newcommand{\van}{\mathrm{van}}

\newcommand{\et}{\mathrm{\acute{e}t}}
\newcommand{\cont}{\mathrm{cont}}
\newcommand{\Et}{\mathrm{\acute{E}t}}

\newcommand{\nr}{\mathrm{nr}}
\newcommand{\pt}{\mathrm{pt}}

\newcommand{\KK}{\mathrm{K}}

\newcommand{\GL}{\mathrm{GL}}
\newcommand{\SO}{\mathrm{SO}}
\newcommand{\SL}{\mathrm{SL}}

\newcommand{\Hom}{\mathrm{Hom}}
\newcommand{\Spec}{\mathrm{Spec}}

\newcommand{\Hdg}{\mathrm{Hdg}}
\newcommand{\Gal}{\mathrm{Gal}}

\newcommand{\MU}{\mathrm{MU}}
\newcommand{\Sq}{\mathrm{Sq}}

\def\alg{\mathrm{alg}}

\newcommand{\RR}{\mathrm{R}}

\newcommand{\isoto}{\myxrightarrow{\,\sim\,}}
\makeatletter
\def\myrightarrow{{\setbox\z@\hbox{$\rightarrow$}\dimen0\ht\z@\multiply\dimen0 6\divide\dimen0 10\ht\z@\dimen0\box\z@}}
\def\myrightarrowfill@{\arrowfill@\relbar\relbar\myrightarrow}
\newcommand{\myxrightarrow}[2][]{\ext@arrow 0359\myrightarrowfill@{#1}{#2}}
\makeatother

\def\loccit{\emph{loc}.\kern3pt \emph{cit}.{}\xspace}
\def\eg{e.g.\kern.3em}

\def\resp {\text{resp.}\kern.3em}

\def\bG{\mathbf G}

\def\bmu{\boldsymbol\mu}

\newcommand{\CH}{\mathrm{CH}}

\def\cA{\mathcal{A}}

\def\cL{\mathcal{L}}
\def\cO{\mathcal{O}}

\def\cH{\mathcal{H}}

\def\cU{\mathcal{U}}
\def\cV{\mathcal{V}}
\def\cW{\mathcal{W}}
\def\cY{\mathcal{Y}}

\def\cZ{\mathcal{Z}}

\def\ci{\mathcal{C}^{\infty}}

\def\kX{\mathfrak{X}}

\def\talpha{\tilde{\alpha}}

\def\oF{\overline{\mathbb{F}}_p}

\def\oZ{\overline{Z}}

\def\oX{\overline{X}}

\def\oW{\overline{W}}

\def\oeta{\overline{\eta}}

\def\wW{\widetilde{W}}

\def\wY{\widetilde{Y}}
\def\wcY{\widetilde{\mathcal{Y}}}

\def\wH{\widetilde{H}}

\def\A{\mathbb A}

\def\Z{\mathbb Z}
\def\C{\mathbb C}
\def\F{\mathbb F}

\def\Q{\mathbb Q}
\def\P{\mathbb P}
\def\R{\mathbb R}

\def\bN{\mathbb N}

\begin{document}

\title[Steenrod operations and algebraic classes]{Steenrod operations and algebraic classes}

\author{Olivier Benoist}
\address{D\'epartement de math\'ematiques et applications, \'Ecole normale sup\'erieure, CNRS,
45 rue d'Ulm, 75230 Paris Cedex 05, France}
\email{olivier.benoist@ens.fr}

\renewcommand{\abstractname}{Abstract}
\begin{abstract}
Based on a relative Wu theorem in \'etale cohomology, we study the compatibility of Steenrod operations on Chow groups and on \'etale cohomology. Using the resulting obstructions to algebraicity, we construct new
examples of non-algebraic cohomology classes over various fields ($\C$, $\R$, $\oF$, $\F_q$).

We also use Steenrod operations to study the mod $2$ cohomology classes of a compact $\ci$ manifold $M$ that are algebraizable, \emph{i.e.} algebraic on some real algebraic model of $M$. 
We give new examples of algebraizable and non-algebraizable classes, answering questions of Benedetti,  Ded\`o and Kucharz.
\end{abstract}
\maketitle

\section{Introduction}

The theme of this article is the interaction between Steenrod operations and algebraic cohomology classes of smooth projective varieties.

\subsection{Action of Steenrod operations on algebraic classes}

Our starting point is a theorem of Kawai \cite{Kawai} according to which mod $\ell$ Steenrod operations preserve algebraic cohomology classes with $\Z/\ell$ coefficients on smooth projective complex varieties (see also \cite[Example 19.1.8]{Fulton}). This implies in particular that mod $\ell$ algebraic classes on smooth projective complex varieties are killed by odd degree Steenrod operations, and hence provides topological obstructions to the algebraicity of such classes. One recovers in this way the first counterexamples to the integral Hodge conjecture, constructed by Atiyah and Hirzebruch \cite{AH}.

These arguments were adapted by Colliot-Th\'el\`ene and Szamuely with help from Totaro \cite[Theorem~2.1]{CTS} to construct counterexamples to the integral Tate conjecture over $\oF$ (whose statement is recalled in \S\ref{conventions}). We refer to \cite{Quicktorsion, PY, Kameko, Antieau} for further examples. In particular, the vanishing of odd degree Steenrod operations on mod~$\ell$ algebraic classes over algebraically closed fields of characteristic $\neq \ell$, proven in a particular case in \cite[Theorem~2.1~(1)]{CTS},
follows in general from Quick's work \cite[Theorem 1.1]{Quicktorsion}.
Our first goal is to extend this result to arbitrary, not necessarily algebraically closed, fields.

\vspace{.5em}

To do so, we compare Brosnan's Steenrod operations on Chow groups (\cite{Brosnan}, see also \S\ref{parSteenrodChow}) and the Steenrod operations in (twisted) \'etale cohomology (see \cite{GW} and \S\S\ref{Steenrodetale}--\ref{Steenrodtwistedetale}) by means of the mod $\ell$ \'etale cycle class map $\cl$.

In the next theorem, we write $\Sq$ for the total Steenrod square (when ${\ell=2}$) and~$P$ for the total Steenrod $\ell$\nobreakdash-th power (when $\ell$ is odd). 
When $\ell=2$, the cohomology class $\varpi$ is the class of~$-1$ in $k^*/(k^*)^2\simeq H^1_{\et}(\Spec(k),\Z/2)$.  See also Remark~\ref{rems}~(ii).

\begin{thm}[Theorem \ref{compatibility}]
\label{compatibilityintro}
Let $X$ be a smooth quasi-projective variety over a field $k$. Let $\ell$ be a prime number invertible in $k$. For $x\in \CH^c(X)/\ell$, one~has
\begin{alignat*}{5}
\Sq(\cl(x))&=&\,&\sum_{i\geq 0}(1+\varpi)^{c-i}\cdot\cl(\Sq^{2i}(x))&\,&\textrm{ \hspace{.5em}if }\ell=2,\\
\nonumber P(\cl(x))&=&\,&\cl(P(x))&\,&\textrm{ \hspace{.5em}if }\ell\textrm{ is odd.}\
\end{alignat*}
\end{thm}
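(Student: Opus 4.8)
The plan is to reduce the statement to the universal case and then compute explicitly there. First I would recall that Brosnan's Steenrod operations on Chow groups are constructed, following Voevodsky and Brosnan, from the total power operation: for a line bundle $L$ one has $\Sq(c_1(L)) = c_1(L) + c_1(L)^2$ when $\ell = 2$ (and analogously $P(c_1(L)) = c_1(L) + c_1(L)^\ell$ for $\ell$ odd), and $\Sq$ is multiplicative and commutes with pullback. The \'etale Steenrod operations of \cite{GW} on twisted cohomology $\HH^*_{\et}(X, \Z/2(\ast))$ satisfy the analogous Cartan formula and compatibility with pullback, but on the first Chern class $\cl(c_1(L)) = c_1^{\et}(L) \in \HH^2_{\et}(X,\Z/2(1))$ one must track the Tate twist: the total Steenrod square of a degree-$2$, weight-$1$ class $v$ is $v + \Sq^2 v = v + v^2$, and it is the interplay of the twist with $\Sq^2$ that produces the class $\omega \in \HH^1_{\et}(k,\Z/2(1)) = k^\times/2$ (the class of $-1$, or more precisely the image of $-1$ under $k^\times/2 \xrightarrow{\sim} \HH^1_{\et}(k,\mu_2)$) and hence the correction factor $(1+\omega)^{c-i}$. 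Concretely, $\Sq$ on twisted \'etale cohomology is not the "geometric" total square but is twisted by powers of $1+\omega$, exactly as in the relative Wu theorem referenced in the abstract; I would isolate this as the key input (presumably established in \S\S\ref{Steenrodetale}--\ref{Steenrodtwistedetale}).

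Next I would carry out the standard reduction. Both sides of the claimed identity are additive in $x$, compatible with proper pushforward and with flat pullback (using that $\cl$ is a map of oriented theories and that both sets of Steenrod operations are bistable / compatible with Gysin maps), so by the projective bundle formula and resolution-type arguments — or more cleanly, by the fact that $\CH^*(X)/\ell$ is generated under pushforward and Chern-class operations — it suffices to treat the universal case where $X$ is (an approximation of) $\BG_m$, equivalently a high-dimensional projective space, and $x = c_1(\cO(1))^j$. Equivalently one can reduce to $x$ a product of first Chern classes on a product of projective spaces and then, by multiplicativity of $\cl$, $\Sq$, $P$, to a single $x = c_1(L)$ and its powers on $\P^N$.

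In that universal case everything is an explicit polynomial computation. For $\ell = 2$: with $t = c_1^{\et}(L)$, the geometric total square gives $\Sq_{\mathrm{geom}}(t) = t + t^2 = t(1+t)$, while the \emph{twisted} square on $\HH^2_{\et}(\P^N,\Z/2(1))$ differs from the geometric one by a factor $(1+\omega)$ coming from the weight-$1$ Tate twist; more generally on $\HH^{2j}_{\et}(\P^N,\Z/2(j))$ it differs by $(1+\omega)^j$. Meanwhile $\cl(\Sq^{2i}(t^j)) = \cl\!\big(\binom{j}{i} t^{j+i}\big) = \binom{j}{i} t^{j+i}$, so the right-hand side of the displayed formula with $c = j$ becomes $\sum_i (1+\omega)^{j-i}\binom{j}{i} t^{j+i}$, and one checks this equals the twisted total square $(1+\omega)^j \cdot t^j(1+t)^j = (1+\omega)^j (t+t^2)^j$ by the binomial theorem after matching powers of $(1+\omega)$ and $t$. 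The odd-$\ell$ case is easier: the Tate twist contributes no correction because the relevant class $\omega$ is $2$-torsion and dies, so $P$ on twisted \'etale cohomology agrees on the nose with the geometric operation, and $P(t) = t + t^\ell$ matches $\cl(P(c_1(L))) = \cl(c_1(L) + c_1(L)^\ell)$ directly, again extending to powers and products by the Cartan formula.

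The main obstacle I anticipate is not the universal computation but the \emph{bookkeeping of the Tate twists} and the verification that $\cl$ intertwines Brosnan's operations with the twisted \'etale operations at the level of the cycle class map before passing to the universal case — i.e. checking that $\cl$ is compatible with the relevant Gysin/pushforward maps and orientations so that the reduction step is legitimate, and correctly identifying the twisted total Steenrod square on $\HH^*_{\et}(X,\Z/2(\ast))$ in terms of the untwisted one via powers of $(1+\omega)$. Once the relative Wu-type statement pinning down this twist is available (from the earlier sections), the rest is a formal reduction plus the binomial identity above.
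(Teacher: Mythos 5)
There is a genuine gap, and it is in the reduction step. You assert that both sides of the identity are ``compatible with proper pushforward,'' and on that basis reduce to powers of $c_1(\cO(1))$ on a projective space. Neither half of this works. First, Steenrod operations do \emph{not} commute with Gysin pushforwards: they satisfy a Riemann--Roch-type relative Wu formula with a nontrivial correction by the characteristic class of the virtual normal bundle (Theorem \ref{thWu} in \'etale cohomology, Proposition \ref{WuChow} for Brosnan's operations on Chow groups). So the compatibility you invoke is false as stated, and establishing the corrected version on both sides, together with a comparison of the two correction classes $w^{\et}(N_f)$ and $\cl(w(N_f))$, is precisely the substance of the theorem --- indeed the factor $(1+\omega)^{c-j}$ arises exactly from that comparison (Lemma \ref{wetChow}), not from any intrinsic ``twisting'' of the operations. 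Second, there is no universal case to reduce to: an arbitrary class in $\CH^c(X)/\ell$ is the class of a possibly singular subvariety and is not a polynomial in Chern classes of line bundles pulled back from projective spaces; there is no splitting principle for algebraic cycles. The legitimate reduction is the one the paper uses: after passing to the perfect closure, write $x$ (up to a unit mod $\ell$, via Gabber's alteration theorem) as $f_*[Y]$ with $Y$ smooth and $f$ projective, apply the two relative Wu theorems to $f_*1$, and conclude with Lemma \ref{wetChow}; the splitting-principle computation you describe is only valid for that last characteristic-class identity, not for the theorem itself.

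A secondary point: your description of the mechanism producing $\omega$ is off. For $\ell=2$ the sheaves $\bmu_2^{\otimes r}$ are canonically $\Z/2$, so the ``twisted'' operations on $H^*_{\et}(X,\bmu_2^{\otimes r})$ do not differ from the untwisted ones by a factor $(1+\omega)^j$; the class $\omega$ enters through the Bockstein computation on classes that lift to $\bmu_4$-coefficients (Lemmas \ref{calculBock} and \ref{compaBock}) and through the \'etale Stiefel--Whitney classes of Lemma \ref{wet}. (Your check on powers of $c_1(L)$ happens to give the right answer because $\Sq^1(c_1^{\et}(L))=\omega\cdot c_1^{\et}(L)$, but the reasoning behind it is not the one that generalizes.) Likewise, for odd $\ell$ the absence of $\omega$ in the formula is not because ``$\omega$ is $2$-torsion and dies'' --- $\omega\in H^1_{\et}(\Spec k,\Z/\ell)$ is generally nonzero for odd $\ell$ --- but because the operations $P^i$ do not involve the Bockstein and the odd-$\ell$ characteristic-class comparison in Lemma \ref{wetChow} contains no $\omega$.
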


It follows from Theorem~\ref{compatibilityintro} that algebraic cohomology classes are preserved by all Steenrod operations as soon as $k$ contains a primitive $\ell^2$-th root of unity, and by Steenrod's $\ell$-th powers when $\ell$ is odd and $k$ is arbitrary (see Proposition \ref{stability}). It is however not true that Steenrod squares always preserve algebraic classes (\emph{e.g.} if $\ell=2$ and $k=\R$, see Remark \ref{remSteencl}), and one can view Theorem \ref{compatibilityintro} as providing the correct substitute for this incorrect statement.

Theorem \ref{compatibilityintro} is proved by comparing two relative Wu theorems (describing the behaviour of Steenrod operations under proper pushforwards): one for Chow groups and another one in \'etale cohomology. The former is due to Brosnan (\cite{Brosnan}, see Proposition~\ref{WuChow}). The latter is due to Scavia and Suzuki \cite[Proposition 3.1]{ScaSuz2} when $\ell=2$ and $k$ is algebraically closed, and could be deduced in general from the Riemann--Roch theorems of Panin \cite{Panin} and D\'eglise \cite{Deglise}. We have found it more convenient to give a self-contained proof in \S\ref{parWu} (see Theorem \ref{thWu}).


\subsection{Examples of non-algebraic cohomology classes}

Our second goal, following the above-mentioned counterexamples to the integral Hodge and Tate conjectures, is to exploit the obstructions to algebraicity stemming from Theorem \ref{compatibilityintro} to give interesting new examples of non-algebraic cohomology classes.

\subsubsection{} 

We first apply classical topological obstructions to algebraicity going back to Atiyah and Hirzebruch \cite{AH}  (the vanishing of odd degree Steenrod operations) to a variety constructed in \cite[\S 5.1]{BOConiveau}. This yields a $5$-dimensional counterexample to the integral Tate conjecture over $\oF$ (see Remark \ref{rem1} for its Hodge counterpart).


\begin{thm}[Theorem \ref{th1b}]
\label{th1bintro}
Let $p$ be an odd prime number. There exists a smooth projective variety of dimension $5$ over $\oF$ on which the $2$-adic integral Tate conjecture for codimension $2$ cycles fails.
\end{thm}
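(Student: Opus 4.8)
The plan is to combine the Atiyah--Hirzebruch obstruction (odd-degree Steenrod operations kill mod-$\ell$ algebraic classes over algebraically closed fields, here in its $\ell=2$ \'etale incarnation over $\oF$, which is the special case $f$ a structure morphism of Theorem \ref{thWuintro} together with the case $X=\mathrm{pt}$ of Theorem \ref{compatibilityintro}) with the explicit variety $B$ of dimension $5$ over $\oF$ constructed in \cite[\S 5.1]{BOConiveau}. I would recall that in \loccit{} a class in $H^4_{\et}(B,\Z_2(2))$ (or its reduction mod $2$) is exhibited together with the computation of the mod-$2$ cohomology ring of $B$ in low degrees, in particular the action of $\Sq^1$ and $\Sq^3$; the key point to extract is that the mod-$2$ reduction $\bar\beta\in H^4_{\et}(B,\Z/2)$ of the relevant class satisfies $\Sq^3(\bar\beta)\neq 0$ (equivalently $\Sq^{2k+1}\bar\beta\neq 0$ for some odd Steenrod operation).

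Granting this, the argument is short. First I would show that the $2$-adic integral Tate conjecture for codimension $2$ cycles on $B$ is \emph{equivalent} to the surjectivity of the cycle class map $\cl\colon \CH^2(B)\otimes\Z_2\to H^4_{\et}(B,\Z_2(2))^{\mathrm{Gal}}$ onto the Tate classes; over $\oF$ all of $H^4_{\et}(B,\Z_2(2))$ is Tate (Galois acts through a finite quotient up to twisting, and one passes to a finite field of definition), so it suffices to produce a single non-algebraic class. Then I would reduce modulo $2$: if the $2$-adic integral Tate conjecture held, then the image of $\CH^2(B)/2$ under the mod-$2$ cycle class map $\cl$ would contain $\bar\beta$ (one needs here that $H^3_{\et}(B,\Z_2(2))$ has no $2$-torsion, or more simply one argues directly with the known structure of $H^*(B,\Z_2)$ from \cite{BOConiveau}, so that the mod-$2$ reduction map on the relevant pieces is injective on the Tate part). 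But by Theorem \ref{compatibilityintro} applied with $X=B$, $k=\oF$, $\ell=2$ (so $\omega=0$ since $-1$ is a square in $\oF^*$), every element of the image of $\cl$ on $\CH^2(B)/2$ is a mod-$2$ cycle class, hence is annihilated by $\Sq^{2i+1}$ for all $i\geq 0$ — indeed $\Sq(\cl(x))=\sum_i \cl(\Sq^{2i}x)$ is a sum of even-degree classes. This contradicts $\Sq^3(\bar\beta)\neq 0$, and the theorem follows.

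The main obstacle I anticipate is not the formal deduction but the bookkeeping around the variety $B$ from \cite[\S 5.1]{BOConiveau}: one must make sure that the cohomology class one starts from really does survive to an odd Steenrod operation (this is presumably already recorded in \loccit{}, since that variety was designed as a counterexample to the integral Hodge conjecture in degree $4$, and the Steenrod obstruction is the classical Atiyah--Hirzebruch one), and one must check the torsion-freeness / Tate-ness statements needed to transport the problem cleanly between $\Z_2$-coefficients and $\Z/2$-coefficients and between the Hodge-theoretic picture over $\C$ and the \'etale picture over $\oF$. The passage from $\C$ to $\oF$ is handled by a spreading-out and specialization argument (the variety $B$ is defined over a number field, one reduces modulo a suitable prime of good reduction, and compatibility of cycle classes and Steenrod operations with specialization — which for Steenrod operations is built into their definition via Theorem \ref{thWuintro} over general bases — preserves both the class and the nonvanishing $\Sq^3\bar\beta\neq 0$); the dimension count ``$5$'' is then just the dimension of $B$. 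Finally I would remark (as in Remark \ref{rem1}) that the same variety over $\C$ gives the Hodge counterpart.
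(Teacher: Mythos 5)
Your formal deduction is the same as the paper's (the class is shown non-algebraic because $\Sq^3$ kills mod~$2$ algebraic classes over $\oF$, i.e.\ Corollary \ref{Tateobstr}~(i), which follows from Theorem \ref{compatibilityintro} with $\omega=0$), but the two inputs you defer to citations or hand-waving are exactly where the content of Theorem \ref{th1bintro} lies, and as stated they have gaps. First, \cite[\S 5.1]{BOConiveau} does not contain a $5$-dimensional variety carrying a degree-$4$ class with nonvanishing $\Sq^3$: what it provides is a \emph{fourfold} $Z$ with a $2$-torsion class $\sigma\in H^3(Z(\C),\Z)$ whose mod-$2$ reduction satisfies $\rho(\sigma)^2\neq 0$. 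The paper's Proposition \ref{prop5} then manufactures the fivefold as $Y=Z\times E$ with $E$ an elliptic curve, and the class as $y=p_1^*\sigma\cdot p_2^*\tau$ with $\tau\in H^1(E(\C),\Z)$ primitive; the Cartan formula and $\Sq^3(\rho(\sigma))=\rho(\sigma)^2$ give $\Sq^3(\rho(y))=p_1^*\rho(\sigma)^2\cdot p_2^*\rho(\tau)\neq 0$. This product construction is the missing geometric step, not bookkeeping, and your remark that the dimension $5$ ``is just the dimension of $B$'' is incorrect.

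Second, your claim that over $\oF$ all of $H^4_{\et}(X,\Z_2(2))$ is Tate is false: Frobenius eigenvalues on the weight-zero twist are Weil numbers of absolute value $1$ but in general not roots of unity, so Galois does not act through a finite quotient and most classes are not Tate (already $H^2_{\et}(E_1\times E_2,\Z_\ell(1))$ for non-isogenous elliptic curves shows this). Consequently you never actually exhibit an \emph{integral Tate} class whose algebraicity is obstructed; the paper's fix is that its class $y$ is torsion, hence the induced $x\in H^4_{\et}(X,\Z_2(2))[2]$ is automatically Tate, and this also makes your worries about $2$-torsion in $H^3$ and ``injectivity on the Tate part'' unnecessary: if $x$ were algebraic its mod-$2$ reduction would be a mod-$2$ cycle class, contradicting $\Sq^3(\rho(x))\neq 0$. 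Two further points need care: for the statement ``for every odd $p$'' you must arrange good reduction at the given $p$, not at ``a suitable prime''; the paper does this by choosing all elliptic curves in the construction to have CM (potentially good reduction everywhere) and noting that the degree-$2$ \'etale covers entering $Z$ specialize away from $2$. And the compatibility of Steenrod operations with the passage from $\C$ to $\oF$ comes from the $\cA_2$-equivariance of Artin's comparison isomorphism together with smooth and proper base change, not from Theorem \ref{thWuintro}, which is the relative Wu formula and plays no role in this proof.
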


The counterexamples to the integral Tate conjecture over $\oF$ that have appeared so far in the literature all have dimension $\geq 7$. 
By \cite[(5.10)]{Tate} and \cite[Theorem 0.5]{Schoen}, it would follow from the (rational) Tate conjecture over $\oF$ that no $3$-dimensional counterexample exists. We do not know any $4$-dimensional counterexample.
In fact, we do not know any counterexample to the integral Tate conjecture for cycles of dimension $2$ over $\oF$.

\subsubsection{} 

We then notice that the stability of algebraic classes by Steenrod operations (over, say, algebraically closed fields) yields obstructions to algebraicity that go beyond the vanishing of odd degree Steenrod operations. 
For instance, over the complex numbers, it implies that even degree Steenrod operations send mod $\ell$ algebraic classes to reductions mod $\ell$ of integral Hodge classes (and, over $\oF$, to reductions mod $\ell$ of integral Tate classes). This really may obstruct the integral Hodge and Tate conjectures, because there are no reasons why reductions mod $\ell$ of integral Hodge classes (or of integral Tate classes) should be preserved by even degree Steenrod operations. We illustrate these new Hodge-theoretic (or Galois-theoretic) obstructions to the integral Hodge and Tate conjectures by using them to prove the following theorem (see Remark \ref{rem2} for its Hodge counterpart).

\begin{thm}[Theorem \ref{th2b}]
\label{th2bintro}
Choose $\ell\in\{2,3,5\}$ and let $p$ be a prime number distinct from~$\ell$.
There exists a smooth projective variety $X$ of dimension $2\ell+2$ over $\oF$ with $H^*_{\et}(X,\Z_{\ell})$ torsion-free on which the $\ell$-adic integral Tate conjecture for codimension~$2$ cycles fails.
\end{thm}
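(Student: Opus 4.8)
The plan is to combine the obstruction coming from Theorem~\ref{compatibilityintro} with an explicit construction, modelled on the Atiyah--Hirzebruch and Colliot-Th\'el\`ene--Szamuely approach, that produces a torsion-free variety carrying a mod~$\ell$ cohomology class on which the relevant Steenrod obstruction is nonzero. Concretely, over $\oF$ one wants a smooth projective $X$ of dimension $2\ell+2$, with $H^*_{\et}(X,\Z_\ell)$ torsion-free, together with a class $\beta\in H^4_{\et}(X,\Z_\ell(2))$ whose reduction $\bar\beta\in H^4_{\et}(X,\Z/\ell(2))$ is \emph{not} algebraic, witnessed by the fact that some even-degree Steenrod power $P^j(\bar\beta)$ (for $\ell$ odd; for $\ell=2$ some $\Sq^{2i}$) is not the reduction of an integral Tate class. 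By Theorem~\ref{compatibilityintro}, if $\bar\beta$ were algebraic then $P^j(\bar\beta)$ would be $\cl$ of an algebraic cycle class, hence the reduction of an integral Tate class; so exhibiting one $P^j(\bar\beta)$ that fails this necessary condition contradicts algebraicity. Since the $2$-adic integral Tate conjecture for codimension~$2$ cycles predicts every such $\bar\beta$ (lifting to an integral Tate class) is algebraic, this disproves it.

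The construction I would use is the classifying-space/approximation technique: take a finite group $G$ (for instance an extraspecial $\ell$-group, as in \cite{AH}, \cite{CTS}) and a smooth projective approximation $X$ to a quotient of a product of Grassmannians or an appropriate $BG$-approximation, arranged so that (i) $H^*_{\et}(X,\Z_\ell)$ is torsion-free in the relevant range, and (ii) there is an explicit degree-$4$ class $c$ (e.g.\ a Chern or Pontryagin-type class of a representation) whose total Steenrod operation hits, in degree $2\ell+4 = 2(2\ell+2)$, a class that is not in the image of the integral Tate classes modulo $\ell$. One checks (ii) by computing in $H^*(BG,\Z/\ell)$: the action of $P$ on $c$ is dictated by the Wu-formula/Cartan-formula combinatorics, and by degree reasons the top piece $P^{top}(c)$ is a nonzero element that, because it is an odd-degree-type anomaly or does not lift integrally over $BG$, fails to be a reduction of a Tate class on the finite approximation. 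The torsion-freeness of $H^*(X,\Z_\ell)$ is engineered by choosing the approximation carefully (products of Grassmannians have torsion-free cohomology, and one controls torsion after the quotient by choosing $G$ with the right low-degree integral cohomology).

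The dimension count $2\ell+2$ is the reason for the case split $\ell\in\{2,3,5\}$: with $\beta$ in $H^4$, the first potentially nonzero even Steenrod operation beyond $P^0$ lands $\beta$ in degree $4+2(\ell-1)=2\ell+2$ (for $\ell$ odd; $P^1$ has degree $2(\ell-1)$), and to see it as an obstruction on an $n$-fold one needs $2n = 2\ell+2$ plus the degree, i.e.\ $n=2\ell+2$ after the relevant Gysin/pushforward, while for $\ell\geq 7$ the combinatorics of $H^*(B(\Z/\ell)^m)$ no longer produce a class that is simultaneously non-liftable and supported in the right degree with torsion-free total cohomology. So I would handle $\ell=2$, $\ell=3$, $\ell=5$ by three parallel explicit computations in $H^*(BG;\Z/\ell)$ for a suitable $G_\ell$, verifying in each case the non-vanishing of the relevant $P^j(\bar\beta)$ (resp.\ $\Sq^{2i}(\bar\beta)$) and its failure to lift to an integral Tate class, and then invoking a lifting lemma (of the type in \cite{CTS}, \cite{BOConiveau}) to pass from $BG$ to a smooth projective $\oF$-variety $X$ with the required torsion-freeness.

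The main obstacle, as usual in this circle of ideas, is step (ii) in its \emph{sharp} form: it is not enough that $P^j(\bar\beta)\neq 0$; one must show it is \emph{not} the reduction of an integral $\ell$-adic Tate class on $X$. Since $H^*_{\et}(X,\Z_\ell)$ is torsion-free, the short exact sequence $0\to H^*(X,\Z_\ell)/\ell\to H^*(X,\Z/\ell)\to H^{*+1}(X,\Z_\ell)[\ell]\to 0$ degenerates, so \emph{every} mod-$\ell$ class lifts integrally, and the true content is to show the lifted class is not in the span of images of algebraic cycles even after tensoring appropriately, equivalently that it violates the Galois-equivariance / weight constraints that integral Tate classes satisfy. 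Making this precise requires either (a) a direct computation of the codimension-$2$ cycle class map on the approximation $X$ (feasible because the Chow groups of Grassmannian-type varieties and their quotients are understood), or (b) an argument, in the spirit of the Hodge-theoretic obstruction described in the paragraph before Theorem~\ref{th2bintro}, showing that $P^j$ does not preserve the subgroup of reductions of integral Tate classes — for which one again needs enough control over that subgroup. I expect (a) to be the cleaner route and the place where the real work lies, with the torsion-freeness bookkeeping and the $BG\to X$ lifting being technical but standard.
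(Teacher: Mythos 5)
Your high-level strategy is the right one: you invoke the refined obstruction (Corollary \ref{Tateobstr} (ii), via Theorem \ref{compatibilityintro}) that an even-degree operation applied to an algebraic mod $\ell$ class must be the reduction of an integral Tate class, and you correctly isolate the crux, namely that on a variety with torsion-free $\Z_\ell$-cohomology every mod $\ell$ class lifts integrally, so one must show that \emph{no} lift of $P^1(\rho(x))$ (resp.\ $\Sq^2(\rho(x))$) is Tate. But the proposal supplies no mechanism for this, and the routes you sketch would not work as stated. Route (a), ``compute the codimension-$2$ cycle class map on the approximation,'' is off target: the class to be controlled lives in degree $2\ell+2$, i.e.\ codimension $\ell+1$, and what must be controlled is the whole group of integral Tate classes there, not the image of $\CH^2$. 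Worse, your choice of input space is in tension with itself: for a finite group $G$ of order divisible by $\ell$ (extraspecial $\ell$-groups, $(\Z/\ell)^m$), $H^*(BG,\Z)$ has $\ell$-torsion already in low degrees, so no approximation of $BG$ can have torsion-free cohomology; while if the cohomology is torsion-free in all degrees, the Bockstein-type non-integrality one would like to exploit disappears. ``Engineering torsion-freeness by choosing the approximation carefully'' is exactly the point that cannot be finessed this way, and your explanation of the restriction $\ell\in\{2,3,5\}$ via the combinatorics of $H^*(B(\Z/\ell)^m)$ is not the actual reason.

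The paper resolves this with a geometric step absent from your plan. It approximates $B(E_8\times\bGm)$ --- the constraint $\ell\in\{2,3,5\}$ is that these are the torsion primes of $E_8$: $H^q(BE_8,\Z)$ is $\ell$-torsion-free for $q\le 2\ell+2$, yet there is $y\in H^4(BE_8,\Z)$ with $\Sq^3\Sq^1$-type nonvanishing ($\Sq^3(\rho(y))\neq0$, resp.\ $\beta P^1(\rho(y))\neq 0$) --- reduces modulo $p$, and then replaces the $(2\ell+3)$-dimensional approximation by a member $X$ of a Lefschetz pencil. The pencil member is chosen, via big monodromy (Deligne) and the \v{C}ebotarev density theorem in the style of Schoen, so that the vanishing part of $H^{2\ell+2}_{\et}(X,\Q_\ell(\ell+1))$ contains \emph{no} nonzero Tate classes. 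Weak Lefschetz then does double duty: it gives torsion-freeness of $H^*_{\et}(X,\Z_\ell)$ (low degrees from $BE_8$, high degrees by Poincar\'e duality and a counting argument), and it shows that any integral Tate class of degree $2\ell+2$ on $X$ dies in the torsion-free vanishing quotient, hence is restricted from the ambient variety $W$; there the relation $\alpha(\rho(z))=\rho(v)$ would force $\beta\alpha(\rho(z))=0$, contradicting the nonvanishing inherited from $BE_8$. Without a substitute for this ``no Tate classes in the vanishing cohomology'' step, your outline does not establish the non-liftability to an integral Tate class, so the proof has a genuine gap at precisely the point you yourself flag as the main obstacle.
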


Theorem \ref{th2bintro} gives the first counterexample to the integral Tate conjecture over~$\oF$ with a torsion-free cohomology ring. Indeed, the topological obstructions used in the counterexamples that have previously appeared in the literature force their $\ell$-adic cohomology rings to contain non-zero torsion classes (although the non-algebraic integral Tate classes of \cite{PY} live in torsion-free $\ell$-adic cohomology groups). 
We also refer to \cite[Theorem 1.10]{dJP} for more recent examples, valid for all prime numbers $\ell$, and based on entirely different arguments.

Our proof builds upon \cite{PY}. 
In this article, Pirutka and Yagita use algebraic approximations of classifying spaces of exceptional reductive groups to construct interesting $(2\ell+3)$\nobreakdash-dimensional counterexamples to the $\ell$-adic integral Tate conjecture over~$\oF$ by the Atiyah--Hirzebruch method, for $\ell\in\{2,3,5\}$. We notice that a well-chosen hypersurface  in (a blow-up of) their variety has a torsion-free cohomology ring, but the integral Tate conjecture may still fail by the obstruction described above. 
 
In \cite{Totarobordism}, Totaro explained how to better understand and generalize the Atiyah--Hirzebruch topological obstructions to the integral Hodge conjecture using complex cobordism. In \S\ref{parMU}, we show that our Hodge-theoretic enhancement of these obstructions also has such an interpretation, in terms of the Hodge classes in complex cobordism introduced by Hopkins and Quick \cite{HQ}.

\subsubsection{} 

Finally, we give examples of non-algebraic cohomology classes on varieties defined over non-closed fields, that become algebraic (and even vanish) over the algebraic closure of the base field. This is particularly interesting because their non-algebraicity is then a purely arithmetic phenomenon. 

Over finite fields, the first examples of such classes were constructed very recently by Scavia and Suzuki \cite[Theorem 1.3]{ScaSuz}. As they relied on obstructions to algebraicity induced by non-vanishing odd degree Steenrod operations, they had to restrict to finite fields containing a primitive $\ell^2$-th root of unity. Based on the subtler obstructions to algebraicity stemming from Theorem~\ref{compatibilityintro}  (see Proposition~\ref{weirdobstr}), we are able to remove this hypothesis.

\begin{thm}[Theorem \ref{finitethm}]
\label{finitethmintro}
Let $p\neq \ell$ be prime numbers, and let $\F$ be a finite subfield of $\oF$.
There exist a smooth projective geometrically connected
variety $X$ of dimension $2\ell+3$ over~$\F$ and a non-algebraic class 
$$x\in \Ker\big(H^4_{\et}(X,\Z_{\ell}(2))\to H^4_{\et}(X_{\oF},\Z_{\ell}(2))\big).$$
\end{thm}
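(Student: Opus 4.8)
The plan is to combine the arithmetic obstruction coming from Theorem~\ref{compatibilityintro} with a descent argument, starting from one of the known counterexamples over $\oF$ and then controlling the field of definition. First I would take the Pirutka--Yagita variety (or its Scavia--Suzuki variant) over $\oF$ for the prime $\ell$, which carries a codimension-$2$ integral $\ell$-adic Tate class that is not algebraic; by the standard spreading-out argument, this variety together with its relevant cohomology class and the finitely many cycle-theoretic relations witnessing non-algebraicity is defined over some finite subfield $\F' \subseteq \oF$. The target class $x$ over the given $\F$ will be obtained by a cohomological correspondence (cup product with a suitable class on $\Spec \F'$, or a $1$-dimensional auxiliary factor such as a conic or a $\bmu_\ell$-torsor) that dies after base change to $\oF$ — this is exactly what produces a class in $\Ker(H^4_{\et}(X,\Z_\ell(2)) \to H^4_{\et}(X_{\oF},\Z_\ell(2)))$, and it is where the dimension grows from $2\ell+3$ (the ambient PY-type variety) by a few extra dimensions; I would arrange the numerology so the total is $2\ell+3$.

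Second, I would set up the obstruction. The point, as flagged in the excerpt after Theorem~\ref{compatibilityintro} and in the promised Proposition~\ref{weirdobstr}, is that over a field $k$ not containing a primitive $\ell^2$-th root of unity the term $(1+\omega)^{c-i}$ (for $\ell=2$) or, more generally, the twisting discrepancy between $\Sq$ (or $P$) on Chow groups and on \'etale cohomology is nontrivial; so if $x$ were algebraic, say $x=\cl(\xi)$ for $\xi \in \CH^2(X)/\ell$, then Theorem~\ref{compatibilityintro} would force a precise relation between $\Sq(x)$ (resp.\ $P(x)$) and $\cl$ of a Chow class, a relation that one can contradict. Concretely I would choose $X$ so that some even-degree Steenrod operation applied to $x$ (or to a companion class) lands in a cohomology group where one can see, using the Galois action and the fact that $x$ itself is geometrically trivial, that the value is not of the prescribed form $\sum_i (1+\omega)^{c-i}\cdot\cl(\cdots)$ — i.e.\ not in the image of $\cl$ after correcting by the $\omega$-twist. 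This is really a computation in the cohomology of the chosen model, pulling back the Atiyah--Hirzebruch/Pirutka--Yagita class along the auxiliary map and tracking the $\Sq$-action together with the $H^*(\Spec\F,\Z/\ell)$-module structure.

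The main obstacle, I expect, is the last point: verifying that the resulting numerical obstruction is genuinely nonzero, i.e.\ that the candidate class is \emph{not} killed by the obstruction map. One has to (a) compute enough of the ring $H^*_{\et}(X,\Z/\ell)$ and the Steenrod action on it — inheriting this from the Pirutka--Yagita computation of the mod-$\ell$ cohomology of the relevant approximated classifying space, restricted to a hypersurface and possibly blown up, which is delicate because taking a hypersurface can create or kill the torsion classes that carry the obstruction — and (b) check that the extra arithmetic factor over $\F$ interacts with $\Sq$ (resp.\ $P$) exactly so as to land outside $\Ima(\cl)$ in the twisted sense of Theorem~\ref{compatibilityintro}. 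In particular one must ensure the class does not become algebraic for a trivial reason over $\F$ itself (only over $\oF$), which I would guarantee by a weight or Galois-weight argument showing it pairs nontrivially with the arithmetic part of cohomology while dying geometrically.

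Finally I would assemble these pieces: choose $\F' \supseteq \F$ finite containing the PY data; form $X$ over $\F$ by an appropriate Weil restriction or twisted product so that $X_{\F'}$ contains the PY-type variety as a building block and $X_{\oF}$ has the companion class vanishing; define $x$ as the image under the correspondence of the non-algebraic integral Tate class; and conclude non-algebraicity of $x$ over $\F$ from Proposition~\ref{weirdobstr} (the avatar of Theorem~\ref{compatibilityintro}) while its geometric triviality is automatic by construction. The geometric connectedness and smoothness are arranged by choosing smooth ample hypersurfaces via Bertini over the finite field (or by passing to a suitable finite extension and descending), and the dimension count is forced to be $2\ell+3$ by the choice of the auxiliary factor.
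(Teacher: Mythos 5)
Your proposal does not reach the statement, and the gaps are precisely at the points where the paper introduces its new ideas. First, the mechanism producing the geometrically trivial class is not set up correctly. A class in $\Ker\big(H^4_{\et}(X,\Z_{\ell}(2))\to H^4_{\et}(X_{\oF},\Z_{\ell}(2))\big)$ must come from the arithmetic part $H^1(G,H^3_{\et}(X_{\oF},\Z_{\ell}(2)))$ of the Hochschild--Serre spectral sequence (with $G=\Gal(\oF/\F)$), so the geometric input has to be a degree~$3$ class on $X_{\oF}$ with a prescribed Frobenius behaviour --- not the degree~$4$ Pirutka--Yagita Tate class you propose to transport by a correspondence. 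Cupping with classes from $\Spec\F'$ or with an auxiliary curve shifts degrees and raises the dimension, and your plan to start from a $(2\ell+3)$-dimensional PY-type variety, add factors, and still end at dimension $2\ell+3$ is not coherent. In the actual proof (Proposition \ref{constrFq} and Theorem \ref{finitethm}) the variety is an approximation, in dimension exactly $2\ell+3$, of the classifying space of the \emph{finite \'etale group scheme} $(\bmu_\ell)^2\times\bG_m$; the degree~$3$ geometric class is $\beta(z)$ for an explicit degree~$2$ class $z$, and it is injected into $H^4_{\et}(X,\bmu_\ell^{\otimes 2})$ by the map $\psi_X$ coming from $H^1(G,-)$. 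Replacing the semisimple groups of Scavia--Suzuki by the non-constant group scheme $(\bmu_\ell)^2$ is the key move that removes the $\ell^2$-th root of unity hypothesis: it forces Frobenius to act on the $H^1$-generators through the cyclotomic character, which is what makes $\Sq^3\Sq^1(z)$ (resp.\ $\beta P^1\beta(z)$) provably not in the image of $F-\Id$ over \emph{every} finite field. Your ``weight or Galois-weight argument'' hand-waves exactly this computation, and for the PY/Scavia--Suzuki input it is not clear it can be made to work --- that difficulty is why the hypothesis on $\F$ appeared in their work in the first place; the subtler obstruction of Proposition \ref{weirdobstr} alone does not remove it.

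Second, even granting a geometrically trivial class with the right geometric avatar, your obstruction step is missing the ingredient that lets one evaluate Steenrod operations on it: the compatibility of the $\cA_\ell$-action with the Hochschild--Serre spectral sequence, i.e.\ the commutation of $\Sq^i$ (resp.\ $\beta P^1$) with the maps $\psi_X:H^{q-1}_{\et}(X_{\oF},\bmu_\ell^{\otimes r})\to H^q_{\et}(X,\bmu_\ell^{\otimes r})$ (Scavia--Suzuki, used as diagram (\ref{SteenHS}) in the paper). Without this, knowing the Steenrod action on the geometric cohomology of the classifying space tells you nothing about $\Sq^3(y)$ or $\beta P^1(y)$ for the arithmetic class $y$. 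Finally, for $\ell=2$ there is an additional filtration argument you do not anticipate: one must check that $\omega\cdot\Sq^2(y)$ (and $\omega^3\cdot y$) die because they land in $F^2$ of the Hochschild--Serre filtration, which vanishes since $G$ has cohomological dimension $1$; only then does non-vanishing of $\Sq^3(y)$ contradict Proposition \ref{weirdobstr}(ii). For odd $\ell$ the obstruction is simply that $\beta P^1$ kills algebraic classes (Proposition \ref{stability}(ii) and Lemma \ref{compaBock}), with no $\omega$-correction needed. So the overall skeleton you describe (classifying-space approximation, arithmetic class, $\omega$-aware obstruction) is in the right spirit, but the three load-bearing steps --- the choice of $(\bmu_\ell)^2$ as a twisted finite group scheme, the construction of the class through $H^1(G,H^3)$ from a degree~$2$ class via the Bockstein, and the Steenrod/Hochschild--Serre compatibility plus the filtration vanishing --- are absent or replaced by constructions that would not produce a class of the required shape.
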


Scavia and Suzuki's examples are algebraic approximations of classifying spaces of semisimple algebraic groups. To be able to encompass all finite fields, we instead resort to algebraic approximations of classifying spaces of (not necessarily constant) finite \'etale group schemes.

Over the field $\R$ of real numbers, non-algebraic classes that are geometrically algebraic are harder to construct on real varieties with no real points (as there are then no obstructions to algebraicity induced by the topology of the set of real points). The example of lowest dimension appearing in the literature is the $7$-dimensional anisotropic quadric (\cite[Example~2.5]{BW1}, see also \cite[\S 5]{Yagitaquadrics}). Another application of Proposition \ref{weirdobstr} allows us to produce a $4$-dimensional example. In the next statement, we set $G:=\Gal(\C/\R)$ and we let $H^*_G(X(\C),-)$ denote the $G$-equivariant cohomology of the set of complex points of a smooth variety over $\R$.

\begin{thm}[Theorem \ref{threal}]
\label{threalintro}
There exists a smooth projective variety $X$ of dimension $4$ over~$\R$ such that $X(\R)=\varnothing$,
and a non-algebraic class 
$$x\in \Ker\big(H^4_{G}(X(\C),\Z(2))\to H^4(X(\C),\Z(2))\big).$$
\end{thm}

  Theorem \ref{threalintro} gives a new counterexample to the real integral Hodge conjecture introduced in \cite[Definition 2.2]{BW1}, which is not explained by a failure of the usual (complex) integral Hodge conjecture
(see Corollary \ref{corIHCreal}). We refer to Remarks \ref{remIHCR} for a comparison with previously known counterexamples to the real integral Hodge conjecture.

\subsection{Algebraizability of cohomology classes of \texorpdfstring{$\ci$}{smooth} manifolds}
\label{realintro}

In the last section of this article, we consider questions specific to real algebraic geometry, that are thematically related but logically independent of the above.

Let $X$ be a smooth variety over $\R$. Borel and Haefliger (\cite{BH}, see also \cite[\S 1.6.2]{BW1}) have constructed a cycle class map 
\begin{equation*}
\label{BHccm}
\cl_{\R}:\CH^c(X)\to H^c(X(\R),\Z/2),
\end{equation*}
associating with a codimension $c$ integral subvariety $Z\subset X$ the fundamental class of its real locus. Computing or controlling the image $H^*_{\alg}(X(\R),\Z/2)$ of $\cl_{\R}$ is a central topic in real algebraic geometry (see \emph{e.g.} \cite[\S 11.3]{BCR} or \cite{BKHomology}). 

Let $M$ be a compact $\ci$ manifold of dimension $d$. The Nash--Tognoli theorem \cite[Corollario p.\,176]{Tognoli} asserts that there exists a smooth projective variety~$X$ of dimension $d$ over $\R$ and a diffeomorphism $\chi: M\isoto X(\R)$. We call such a pair~$(X,\chi)$ an \textit{algebraic model} of $M$.  A cohomology class $x\in H^*(M,\Z/2)$ is said to be \textit{algebraizable} if it belongs to $\chi^*H^*_{\alg}(X(\R),\Z/2)$ for some algebraic model $(X,\chi)$ of $M$. 
The problem of determining which cohomology classes are algebraizable, a cohomological strengthening of the Nash--Tognoli theorem, is still largely open.

 It is known since Benedetti and Ded\`o \cite[Theorem 1]{BD} that there exist non-algebraizable cohomology classes. Teichner \cite[Theorem~1]{Teichner} has found examples in dimension $6$ (the smallest dimension possible),
 and Kucharz \cite[Theorem~1.13]{Kuchomo} has discovered examples of all even degrees $\geq 2$. It remained an open problem, raised by Benedetti and Ded\`o \cite[p.~150]{BD} and Kucharz \cite[p.\,194]{KucNash}, to determine the possible degrees of non-algebraizable classes, and in particular to construct non-algebraizable classes of odd degree. We solve this problem entirely.

\begin{thm}[Theorem \ref{propnonalgebraizable}]
\label{propnonalgebraizableintro}
For all $c\geq 2$, there exists a compact $\ci$ manifold~$M$ and a class $x\in H^c(M,\Z/2)$ that is not algebraizable.
\end{thm}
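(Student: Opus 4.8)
The plan is to reduce the statement to the algebraicity obstruction coming from Steenrod operations, applied not on an algebraic model itself but on a \emph{resolution-type trace construction} built from one. The starting point is that an algebraizable class on $M$ must, by definition, be pulled back from $H^*_{\alg}(X(\R),\Z/2)$ for \emph{some} model $(X,\chi)$; so to exhibit a non-algebraizable class it suffices to find $M$ together with $x\in H^c(M,\Z/2)$ such that for \emph{every} smooth projective $\R$-variety $X$ with $X(\R)\cong M$ and every identification $\chi$, the class $\chi^{*-1}(x)$ cannot lie in the image of $\cl_\R$. The key leverage is Theorem~\ref{compatibilityintro} specialized to $k=\R$ together with the Borel--Haefliger realization: the image of $\cl_\R$ is controlled by the image of the $\ell=2$ \'etale cycle class map via the comparison between $H^*_{\et}(X_\C,\Z/2)^{\Gal(\C/\R)}$-type groups and $H^*(X(\R),\Z/2)$ (localization at the real points, i.e. the behavior of $\omega$). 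Concretely, Theorem~\ref{compatibilityintro} with $\ell=2$ over $\R$ says that even Steenrod squares do \emph{not} preserve algebraic classes on the nose; instead $\Sq^{2i}$ of an algebraic class is algebraic up to correction terms involving the class $\omega$ of $-1$. On $X(\R)$ the image of $\omega$ under $\cl_\R$ is the generator of $H^1(X(\R),\Z/2)$ pulled back from a point only when $X(\R)$ is connected; more usefully, the Borel--Haefliger formalism converts the $\omega$-twisted stability into a concrete statement about which elements of $H^*(M,\Z/2)$ can be algebraic, and the failure of that statement is what we exploit.

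The concrete construction I would carry out: fix $c\geq 2$ and build $M$ as (a connected sum with, or a product involving) a manifold whose cohomology carries a class $x$ in degree $c$ together with a companion class $y$ such that $\Sq^{2i}x$ or a Wu-type correction of it is forced to be non-algebraic by a dimension/degree count — exactly the Atiyah--Hirzebruch-type phenomenon that odd Steenrod operations kill algebraic classes, but now arranged in degree $c$ by using that on a real model the relevant obstruction sits in $H^{c+\mathrm{odd}}$. The cleanest route is to take $M=N(\R)$ for a well-chosen complex variety and pass to its real points, or to take $M$ to be a real algebraic model of something like a Dold manifold or a lens-space-type manifold with known nontrivial action of $\Sq^1$ and $\Sq^2$. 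For $c$ even, Kucharz already has examples, so the genuinely new content is $c$ odd; there one uses that the real integral Hodge-type obstruction of Theorem~\ref{threalintro}, equivalently Proposition~\ref{weirdobstr}, produces a non-algebraic class in an \emph{odd-degree-adjacent} situation on a variety with no real points, and then one transfers it to a manifold $M=X(\R)$ of a \emph{different} model where $X(\R)\ne\varnothing$ by a blow-up / resolution argument so that the class survives as a genuine cohomology class of a manifold. The linchpin is Proposition~\ref{weirdobstr}: it packages the $\omega$-twisted compatibility into a statement "$\Sq^{\mathrm{odd}}$ of an algebraic class plus lower $\omega$-corrections must vanish," and a manifold violating it cannot be a model for $x$.

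The main obstacle I expect is the \emph{quantifier over all algebraic models}. Nash--Tognoli guarantees one model, but there are infinitely many non-isomorphic ones (blow-ups, connected sums of models, etc.), and a class non-algebraic on one model can become algebraic on another — indeed this is exactly why algebraizability is subtle and why Teichner's and Kucharz's proofs are delicate. The way to defeat this is to make the obstruction \emph{diffeomorphism-invariant}: the correction terms in Theorem~\ref{compatibilityintro} involve $\omega\in H^1$, whose image under $\cl_\R$ on $X(\R)$ is the first Stiefel--Whitney-type class $w_1$ of a line bundle that is the \emph{same} for all models up to the diffeomorphism (since $\cl_\R(\omega)$ is characterized topologically — it is $w_1$ of the orientation-type double cover, or is $0$, depending only on $M$). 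Hence the obstruction "$\Sq^{2i}(x)$ differs from an algebraic class by an explicit polynomial in $\cl_\R(\omega)$ and lower Steenrod operations" becomes a condition purely on $(M,x)$. So the plan's hard step is verifying this invariance carefully — identifying $\cl_\R(\omega)\in H^1(M,\Z/2)$ intrinsically and checking that the resulting obstruction does not depend on the chosen model — after which one simply writes down, for each $c\geq2$, a compact manifold $M$ and $x\in H^c(M,\Z/2)$ violating it (for $c$ even, recovering Kucharz; for $c$ odd, the new cases), and invokes Proposition~\ref{weirdobstr} together with Theorem~\ref{compatibilityintro} to conclude that $x$ is not algebraizable.
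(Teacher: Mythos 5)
There is a genuine gap, on two levels. First, your proposal never actually produces, for a given $c\geq 2$, a manifold $M$ and a class $x\in H^c(M,\Z/2)$ together with a verifiable topological condition that rules out algebraicity on \emph{every} model; the sentence ``one simply writes down \dots a manifold violating it'' is precisely the missing content. Second, the obstruction you propose to use is the wrong one for this problem. The $\omega$-corrected statements (Theorem \ref{compatibilityintro}, Proposition \ref{weirdobstr}) concern the image of the cycle class map in (twisted) \'etale or equivariant cohomology of $X$; they say nothing directly about the image of the Borel--Haefliger map $\cl_{\R}$ in $H^*(X(\R),\Z/2)$, which is what algebraizability is about. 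For $\cl_{\R}$ the situation is in fact simpler and unconditional: by Proposition \ref{BHSteen} and Corollary \ref{BHSteencor} (Akbulut--King, proved via the topological relative Wu theorem of Atiyah--Hirzebruch on the real loci, with no $\omega$-corrections at all), algebraizable classes \emph{are} stable under all mod $2$ Steenrod squares. So the phenomenon you want to exploit --- failure of plain $\Sq$-stability detected through $\omega$ --- is not available on $X(\R)$, and your plan to ``transfer'' a class from a real variety with $X(\R)=\varnothing$ to a manifold has no content, since algebraizability lives entirely on the real locus. Your observation that the obstruction must be model-independent is correct, but the way this is achieved is by using obstructions stated purely in terms of the topology of $(M,x)$, not by trying to identify $\cl_{\R}(\omega)$ intrinsically.

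For comparison, the paper's proof runs as follows: Grant and Sz\"ucs construct, for each $c$, a compact manifold $M$ and $x\in H^c(M,\Z/2)$ such that $x^2$ (if $c$ is even), respectively $(\Sq^2\Sq^1 x)^2$ (if $c$ is odd), is not the reduction of an integral class; by Akbulut--King, squares of algebraizable classes are reductions of integral classes, so $x$ (resp.\ $\Sq^2\Sq^1 x$) is not algebraizable; and for odd $c$ one concludes that $x$ itself is not algebraizable by the Steenrod-stability of algebraizable classes (Corollary \ref{BHSteencor}). Both ingredients are model-independent statements about $H^*(M,\Z/2)$, which is exactly how the quantifier over all algebraic models is defeated; neither the \'etale comparison theorem nor Proposition \ref{weirdobstr} enters.
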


Examples of algebraizable cohomology classes include Stiefel--Whitney classes of topological real vector bundles on $M$ and Poincar\'e-duals of fundamental classes of $\ci$ submanifolds of $M$ (see \cite[Theorem 4.2 and Corollary 4.5]{BT} 
or \cite[Theorem~2.10]{AKrelNash}).
Following Kucharz \cite{KucNash}, let us denote by $A(M)\subset H^*(M,\Z/2)$ the subring generated by these classes. It was asked by Benedetti and Ded\`o \cite[Conjecture 2 p.\,150]{BD} 
and again by Kucharz in \cite[Conjecture A]{KucNash} whether all algebraizable classes belong to~$A(M)$. We give a negative answer to this conjecture.

\begin{thm}[Theorem \ref{cexSWsub}]
\label{cexSWsubintro}
There exists a compact $\ci$ manifold $M$ and an algebraizable class $x\in H^5(M,\Z/2)$ that does not belong to $A(M)$.
\end{thm}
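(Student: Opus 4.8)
The plan is to construct $M$ as (a suitable modification of) the set of real points of a real algebraic variety on which we understand both the algebraic cohomology and the subring $A(M)$, and then to produce an algebraizable class in $H^5$ that provably lies outside $A(M)$. The natural source of algebraizable classes \emph{a priori} outside the Stiefel--Whitney/submanifold subring is the Borel--Haefliger image $\cl_\R(\CH^5(X))$ of an algebraic cycle whose cycle class in $H^5(X(\R),\Z/2)$ is not a polynomial in Stiefel--Whitney classes of $\R$-vector bundles and fundamental classes of submanifolds. Concretely, I would look for an algebraic model $(X,\chi)$ where $X$ carries an interesting codimension-$5$ cycle, for instance a variety built from an approximation of a classifying space $BG$ for a finite group $G$ (paralleling the approach used for Theorems~\ref{finitethmintro} and \ref{threalintro}), so that $\cl_\R$ of that cycle is detected by a nonzero Steenrod-type invariant that kills the entire subring $A(M)$.

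The key steps, in order: (1) isolate a cohomological invariant $\Phi:H^5(M,\Z/2)\to(\text{something})$ that annihilates $A(M)$ — this should follow from the fact that $A(M)$ is generated by Stiefel--Whitney classes and submanifold classes, on which Steenrod operations and Wu-type relations are constrained, so an appropriate combination of $\Sq^i$'s (or a secondary operation) vanishes on all of $A(M)$; (2) exhibit a specific compact manifold $M_0$ with an explicit class $y\in H^5(M_0,\Z/2)$ with $\Phi(y)\neq 0$, coming from the topology of a classifying space as above; (3) realize $y$ as an \emph{algebraic} class on some real algebraic model, i.e.\ show $y\in\chi^*\cl_\R(\CH^5(X))$ for a well-chosen $(X,\chi)$ — here one uses the algebraic approximation of $BG$ together with the Borel--Haefliger formalism, exactly as in the proofs of the earlier theorems in this paper, to guarantee that the relevant Chow class maps onto $y$ under $\cl_\R$; (4) conclude: $y$ is algebraizable by (3) but $y\notin A(M)$ by (1)--(2), since $\Phi(y)\neq 0$ while $\Phi|_{A(M)}=0$. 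Passing from the real points of the approximating variety to a genuine compact $\ci$ manifold (and checking that $\Phi$ is well-defined independent of model, or at least that non-membership in $A(M)$ is model-independent in the required sense) is a routine but necessary bookkeeping step, using that $A(M)$ is defined intrinsically on $M$.

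The main obstacle I expect is step (1): proving that the chosen invariant $\Phi$ genuinely kills \emph{all} of $A(M)$, not just its obvious generators. Since $A(M)$ is a \emph{ring}, one must control $\Phi$ on arbitrary products of Stiefel--Whitney classes and submanifold classes; this requires knowing how $\Phi$ (a Steenrod operation or secondary operation) interacts with cup products via the Cartan formula and with the Wu formula for Stiefel--Whitney classes, and then checking that the resulting expressions still vanish. Equivalently, one needs a structural statement of the form ``$A(M)$ is contained in the kernel of $\Phi$ for every compact $M$,'' which is a clean algebraic-topological fact but must be verified carefully — and it is exactly the obstruction that earlier authors (Benedetti--Ded\`o, Kucharz) did not have, which is why the conjecture stood open. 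The secondary ingredient that could be delicate is ensuring the model in step~(3) actually makes $y$ algebraic: algebraic approximations of classifying spaces only realize $BG$ up to a range of cohomological degrees and up to the issues with non-compactness, so one must be sure the codimension-$5$ cycle and the diffeomorphism $\chi$ can be arranged compatibly, and that no further algebraic relations are forced on $X(\R)$ that would accidentally place $y$ back in $A(M)$.
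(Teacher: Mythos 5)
Your step (1) is where the argument breaks down, and although you flag it as the main obstacle, it is not merely delicate: it is unachievable in the form you propose. Any natural primary operation $\Phi$ built from Steenrod squares that kills all of $A(M)$ for every compact $M$ must in particular vanish on every Poincar\'e dual of a $\ci$ submanifold, i.e.\ on every pullback of the Thom class $s_n\in H^n(\MO(n),\Z/2)$; since $H^*(\MO,\Z/2)$ is a free module over the Steenrod algebra, no nonzero stable primary operation annihilates the Thom classes, so no such $\Phi$ exists (and you do not exhibit a secondary operation that could replace it). The paper's proof does not look for a universal annihilator of $A(M)$. It builds a specific $M$ together with a map $f:M\to\MO(3)$ inducing an isomorphism on $H^{\leq 18}(-,\Z/2)$ (Lemma \ref{approxCW} applied to a Thom space over $\Grass(3,\R^{3+m})$), so that $H^1(M,\Z/2)=H^2(M,\Z/2)=0$; consequently every degree-$5$ element of $A(M)$ is forced to be a \emph{single} generator, either $w_5(E)$ of a vector bundle or a submanifold dual $g^*s_5$, with no products to control. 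Each possibility is then excluded for the two classes spanning $H^5(M,\Z/2)$: one has $\Sq^1(w_5(E))=w_1(E)\cdot w_5(E)=0$ because $H^1(M,\Z/2)=0$, whereas $\Sq^1$ is nonzero on both candidates; and every class of the form $z=g^*s_5$ satisfies the non-linear identity $\Sq^2\Sq^1(z)\cdot z^2+\Sq^1(z)^3+\Sq^1(z)\cdot\Sq^2(z)\cdot z=0$ pulled back from $H^*(\MO(5),\Z/2)$, which both candidates violate --- all of this computed inside the explicitly known ring $H^*(\MO(3),\Z/2)$. Note that this detecting relation is not a linear operation vanishing on $A(M)$, which is exactly why it escapes the obstruction above.

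Your step (3) is also an unnecessary detour that creates problems rather than solving them: no algebro-geometric construction (real approximations of classifying spaces of finite groups, Borel--Haefliger classes of a codimension-$5$ cycle, compatibility of the model with a prescribed diffeomorphism) is needed to make the class algebraizable, and you give no argument for how such a model would be arranged. The paper gets algebraizability for free: $f^*s_3$ is Poincar\'e-dual to a $\ci$ submanifold by the Thom--Pontrjagin construction, hence algebraizable by known results, and algebraizable classes are stable under mod $2$ Steenrod operations (Corollary \ref{BHSteencor}, via the relative Wu theorem), so $x=\Sq^2(f^*s_3)=f^*(w_2\cdot s_3)$ is algebraizable. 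The entire point of the theorem is that $A(M)$, unlike the set of algebraizable classes, fails to be stable under Steenrod operations; a strategy that tries to detect non-membership in $A(M)$ by an operation vanishing on all of $A(M)$ misses this mechanism.
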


The proofs of Theorems \ref{propnonalgebraizableintro} and \ref{cexSWsubintro} rely on the stability of algebraizable classes by mod $2$ Steenrod operations, a result due to Akbulut and King (\cite[Theorem~6.6]{AK}, see also Corollary \ref{BHSteencor}).
To prove Theorem \ref{propnonalgebraizableintro}, we construct cohomology classes whose images by a Steenrod operation can be shown to be non-algebraizable  by the method of Kucharz \cite{Kuchomo}, hence are themselves not algebraizable. 
As for the proof of Theorem \ref{cexSWsubintro}, it relies on the observation that the subring $A(M)$ of $H^*(M,\Z/2)$ is in general not stable under the action of Steenrod operations.

\subsection{Conventions} 
\label{conventions}
A variety over a field $k$ is a separated $k$-scheme of finite type.

Let $X$ be a smooth projective variety over the algebraic closure $k$ of a finitely generated field $k_0$. A class in $H^{2c}_{\et}(X,\Z_{\ell}(c))$ is said to be a Tate class if it belongs to the subgroup 
$H^{2c}_{\et}(X_1\times_{k_1}k,\Z_{\ell}(c))^{G_{k_1}}$ of $H^{2c}_{\et}(X,\Z_{\ell}(c))$
associated with some model~$X_1$ of $X$ over a finite extension $k_1$ of~$k_0$ with absolute Galois group $G_{k_1}$. The classes that lie in the image of the cycle class map $\CH^c(X)\to H^{2c}_{\et}(X,\Z_{\ell}(c))$ are said to be algebraic, and are automatically Tate classes. In this article, the converse statement that all Tate classes are algebraic is called the integral Tate conjecture over $k$ (see \cite[\S 1]{CTScavia} for a discussion of variants of this statement).

\subsection{Acknowledgements} 
I thank Riccardo Benedetti, Jean-Louis Colliot-Th\'e\-l\`ene, Fr\'ed\'eric D\'eglise, Burt Totaro, Olivier Wittenberg and Nobuaki Yagita for helpful comments on a previous version of this article.

\section{A relative Wu theorem in \'etale cohomology}
\label{secWu}

In topology, the behaviour of Steenrod operations with respect to proper pushforwards is controlled by a Grothendieck--Riemann--Roch-type theorem: the relative Wu theorem of Atiyah and Hirzebruch \cite[Satz 3.2]{AHOp}.
In this section, we prove an analogue of this theorem in the \'etale cohomology of schemes. 

We first recall, in \S\ref{Steenrodetale} and \S\ref{Steenrodtwistedetale}, a construction of Steenrod operations on the (twisted) \'etale cohomology of schemes.  The characteristic classes appearing in the statement of the relative Wu theorem are defined and studied in \S\ref{characteristicclasses}, and the relative Wu theorem is proved in \S\ref{parWu}.

In this whole section we fix a prime number $\ell$ and we set $S_{\ell}:=\Spec(\Z[\frac{1}{\ell}])$.

\subsection{Steenrod operations in \'etale cohomology}
\label{Steenrodetale}

Let $\cA_{\ell}$ be the mod $\ell$ Steenrod algebra \cite[I \S 3, VI \S 2]{ES}.  It is a graded $\Z/{\ell}$\nobreakdash-algebra generated by the Steenrod squares $(\Sq^i)_{i\geq 1}$ which have degree $i$ (when $\ell=2$),  or by Steenrod's $\ell$-th powers $(P^i)_{i\geq 1}$ which have degree $2i(\ell-1)$ and by the Bockstein $\beta$ which has degree $1$ (when $\ell$ is odd), subject to the Adem relations. 
We write $\Sq^0=P^0=1$.  If $\ell=2$,  we set $\beta:=\Sq^1$.
We use the notation $\Sq:=\sum_{i\geq 0} \Sq^i$ and $P:=\sum_{i\geq 0}P^i$.

  The algebra $\mathcal{A}_{\ell}$ acts functorially on the mod $\ell$ (relative) cohomology of (pairs of) topological spaces \cite[VII, VIII]{ES}.
There are several ways to construct analogous Steenrod operations on the mod $\ell$ \'etale cohomology of schemes \cite{Epstein, May,Jardine, Feng}.  We follow Feng \cite[\S 3]{Feng} and Scavia and Suzuki \cite[\S 2]{ScaSuz} in exploiting \'etale homotopy theory \cite{Friedlander}.  This choice will allow us to use Scavia and Suzuki's analysis of the behaviour of Steenrod operations in Hochschild--Serre spectral sequences in \S\ref{cexfinite}.

Friedlander associates with any Noetherian scheme $X$ a pro-simplicial set denoted by $\Et(X)$ in \cite{Feng}: its \'etale topological type \cite[Definition~4.4]{Friedlander}. 
Let $f:Y\hookrightarrow X$ be a closed immersion of Noetherian schemes with complement open immersion $j:U\to X$.
Let $M(\Et(j))$ be the mapping cylinder in the sense of \cite[p.~140]{Friedlander} of the map $\Et(j):\Et(U)\to\Et(X)$ induced by~$j$. It is a pro-simplicial set endowed with an inclusion  $\Et(U)\hookrightarrow M(\Et(j))$.
For any abelian group $A$, it follows from \cite[Corollary~14.5, Proposition 14.6]{Friedlander} that there are natural isomorphisms
\begin{equation}
\label{ettopo}
H^*_{\et,Y}(X,A)\isoto H^*(M(\Et(j)),\Et(U),A).
\end{equation}
When $A=\Z/\ell$, one can use the geometric realization of simplicial sets to let $\cA_{\ell}$ act on the right-hand side of (\ref{ettopo}). We may then let~$\cA_{\ell}$ act on $H^*_{\et,Y}(X,\Z/\ell)$ functorially in $(X,Y)$, using the isomorphism (\ref{ettopo}). By construction, this action has all the properties of the action of $\cA_{\ell}$ on the mod~$\ell$ cohomology of topological spaces (see \cite[\S I.1 and \S VI.1]{ES}). We will use them freely in the sequel.

If $X$ is a complex variety, it follows from \cite[Theorem 8.4]{Friedlander} that Artin's comparison isomorphism $H^q_{\et}(X,\Z/\ell)\isoto H^q(X(\C),\Z/\ell)$ is $\cA_{\ell}$-equivariant.
If $X$ is a variety over $\R$, this statement has a $\Gal(\C/\R)$\nobreakdash-equivariant analogue: the comparison isomorphism $H^q_{\et}(X,\Z/\ell)\isoto H^q_{\Gal(\C/\R)}(X(\C),\Z/\ell)$ of \cite[Corollary 15.3.1]{Scheidererbook} is $\cA_{\ell}$-equivariant as a consequence of \cite[Theorem 1.1]{Cox}.

\subsection{Steenrod operations in twisted \'etale cohomology}
\label{Steenrodtwistedetale}

Guillou and Weibel have explained in \cite[\S\S 1--3]{GW} how to extend the Steenrod operations of \S\ref{Steenrodetale} to \'etale cohomology with twisted coefficients. (They work with the definitions of Epstein \cite{Epstein} and May \cite{May}.) We give a self-contained treatment adapted to our needs,
following their arguments closely.

Consider the morphism of schemes $\pi: S'_{\ell}\to S_{\ell}$, where  $S'_{\ell}:=\Spec(\Z[\frac{1}{\ell},t]/(\frac{t^{\ell}-1}{t-1}))$ and $S_{\ell}:=\Spec(\Z[\frac{1}{\ell}])$. It is a Galois finite \'etale cover with Galois group $\Gamma:=(\Z/\ell)^*$, where $n\in \Gamma$ sends $t$ to~$t^n$.  Decomposing $\Z/\ell[\Gamma]$ as a direct sum of one-dimensional $\Gamma$-representations yields an isomorphism 
\begin{equation}
\label{gradedalgebra}
\pi_*\Z/\ell=\bigoplus_{r\in\Z/(\ell-1)}\bmu_{\ell}^{\otimes r}
\end{equation}
of sheaves of algebras on the big \'etale site of
$S_{\ell}$, where $\bmu_{\ell}^{\otimes r}$ is the factor of $\pi_*\Z/\ell$ on which $n\in \Gamma$ acts by multiplication by $n^r$, and where the algebra structure on the right comes from the canonical isomorphism 
\begin{equation}
\label{canonicaliso}
\Z/\ell\isoto \bmu_{\ell}^{\otimes \ell-1}
\end{equation}
 sending $1$ to $\zeta^{\otimes \ell-1}$ for any primitive $\ell$-th root of unity $\zeta$.

Let $f:Y\hookrightarrow X$ be a closed immersion of 
Noetherian $S_{\ell}$-schemes, and let ${f':Y'\hookrightarrow X'}$ denote its base change by $\pi: S'_{\ell}\to S_{\ell}$.
 As the projection $\pi_X:X'\to X$ is finite, one has $\RR^k\pi_{X,*}\Z/\ell=0$ for $k>0$.
It then follows from (\ref{gradedalgebra}) 
that one has a $\Gamma$\nobreakdash-equivariant decomposition
\begin{equation}
\label{gradedTate}
H_{\et,Y'}^*(X',\Z/\ell)=H^*_{\et,Y}(X,\pi_{X,*}\Z/\ell)=\bigoplus_{r\in\Z/(\ell-1)}H^*_{\et,Y}(X,\bmu_{\ell}^{\otimes r}),
\end{equation}
where $n\in \Gamma$ acts by multiplication by $n^r$ on the $r$-th factor of the right-hand side. 

The action of $\cA_{\ell}$ on the left side of (\ref{gradedTate}) defined in \S\ref{Steenrodetale} commutes with the $\Gamma$\nobreakdash-action by functoriality, and hence preserves the factors of the right side of~(\ref{gradedTate}). Let $\cA_{\ell}$ act on the $H^*_{\et}(X,\bmu_{\ell}^{\otimes r})$ in this way.
For all $\alpha\in\cA_{\ell}$, $x\in H^q_{\et,Y}(X,\bmu_{\ell}^{\otimes r})$ and
$x'\in H^{q'}_{\et}(X,\bmu_{\ell}^{\otimes {r'}})$, the standard properties of Steenrod operations imply that $\alpha(x)\in H^{q+\deg(\alpha)}_{\et,Y}(X,\bmu_{\ell}^{\otimes r})$,
that the identities
\begin{equation}
\begin{alignedat}{5}
\label{identiSq}
\Sq^i(x)&=0& \textrm{ for }&q<i&\textrm{ \hspace{1em}and\hspace{1em} }&\Sq^i(x)=x^2&\textrm{ for }&q=i& \textrm{ \hspace{1em}if }&\ell=2,\\
P^i(x)&=0& \textrm{ for }&q<2i&\textrm{ \hspace{1em}and\hspace{1em} }&P^i(x)=x^{\ell}&\textrm{ for }&q=2i& \textrm{ \hspace{1em}if }&\ell\textrm{ is odd,} 
\end{alignedat}
\end{equation}
are verified, and that the Cartan formulas
\begin{equation}
\begin{alignedat}{5}
\beta(x'\cdot x)&=\beta(x')\cdot x+(-1&)^{q'}x'\cdot\beta(x), \\
\label{Cartan}
\Sq(x'\cdot x)&=\Sq(x')\cdot \Sq(x)&  \textrm{ \hspace{1em}if }&\ell=2,\\
P(x'\cdot x)&=P(x')\cdot P(x)&  \textrm{ \hspace{1em}if }&\ell\textrm{ is odd,} 
\end{alignedat}
\end{equation}
hold in $H^*_{\et,Y}(X,\bmu_{\ell}^{\otimes r+r'})$.

\begin{rems} 
\label{rems}
(i) By construction, the action of $\cA_{\ell}$ on $H^*_{\et,Y}(X,\bmu_{\ell}^{\otimes r})$ only depends on $r\in\Z/(\ell-1)$,  if one uses the canonical isomorphism (\ref{canonicaliso}) to trivialize $\bmu_{\ell}^{\otimes \ell-1}$.  

(ii) We will often tacitly identify Tate twists that are congruent modulo $\ell-1$ in our statements, by means of (\ref{canonicaliso}). 
For instance,  Theorem \ref{compatibilityintro} for $\ell$ odd equates $P^i(\cl(x))\in H^{2i(\ell-1)+2c}_{\et}(X,\bmu_{\ell}^{\otimes c})$ and $\cl(P^i(x))\in H^{2i(\ell-1)+2c}_{\et}(X,\bmu_{\ell}^{\otimes i(\ell-1)+c})$.

(iii) If an isomorphism $\xi:\Z/\ell\isoto\mu_{\ell}^{\otimes r'}$ over $X$ is fixed,  then the induced isomorphisms 
$H^*_{\et,Y}(X,\bmu_{\ell}^{\otimes r})\isoto H^*_{\et,Y}(X,\bmu_{\ell}^{\otimes r+r'})$ are $\cA_{\ell}$-equivariant by (\ref{Cartan})
applied with $x'$ chosen to be the class $\xi\in H^0_{\et}(X,\bmu_{\ell}^{\otimes r'})$ of the fixed isomorphism.

(iv) As a consequence of (iii), if $X$ is a variety over a field $k$ in which a primitive $\ell$-th root of unity $\zeta$ has been fixed,  one obtains $\cA_{\ell}$-equivariant isomorphisms $H^*_{\et,Y}(X,\bmu_{\ell}^{\otimes r})\isoto H^*_{\et,Y}(X,\bmu_{\ell}^{\otimes s})$ for all $r$ and $s$ (which depend on the choice of $\zeta$).
\end{rems}

\subsection{The Bockstein}
\label{parBockstein}

We now proceed to the computation of the Bockstein, following \cite[Proposition 3.3]{GW}.  Let $\delta_r:H^q_{\et,Y}(X,\bmu_{\ell}^{\otimes r})\to H^{q+1}_{\et,Y}(X,\bmu_{\ell}^{\otimes r})$ be the boundary map of the short exact sequence
\begin{equation}
\label{topoBockstein}
0\to\bmu_{\ell}^{\otimes r}\to\bmu_{\ell^2}^{\otimes r}\to\bmu_{\ell}^{\otimes r}\to 0.
\end{equation}
(In \cite{GW}, such morphisms are called $\beta$, but we reserve this notation for the Bockstein element of $\cA_ {\ell}$.)
Use a primitive $\ell$-th root of unity  $\zeta\in\cO(S_{\ell}')$ to identify~$\bmu_{\ell}$ and $\Z/\ell$ on $S_{\ell}'$. The short exact sequence  (\ref{topoBockstein}) for $r=1$ then reads 
\begin{equation}
\label{Bockell}
0\to\Z/\ell\to\bmu_{\ell^2}\to\Z/\ell\to 0,
\end{equation}
and we let $\varpi\in H^1_{\et}(S_{\ell}',\Z/\ell)$ be the image of $1\in H^0_{\et}(S_{\ell}',\Z/\ell)$ by its boundary map. The isomorphism class of the extension (\ref{Bockell}) is independent of the choice of~$\zeta$ (if $n\in (\Z/\ell)^*$, multiplication by $n$ on (\ref{topoBockstein}) induces an isomorphism between short exact sequences (\ref{Bockell}) for choices $\zeta$ and $\zeta^n$ of primitive $\ell$-th roots of unity). It follows that the class $\varpi$ is $\Gamma$-invariant. As the degree of $\pi:S_{\ell}'\to S_{\ell}$ is prime to~$\ell$, the Hochschild--Serre spectral sequence shows that $\varpi$ comes by pull-back from a class that we still denote by $\varpi\in H^1_{\et}(S_{\ell},\Z/\ell)$. 

We still call $\varpi\in H^1_{\et}(X,\Z/\ell)$ the pull-back of $\varpi$ to any $S_{\ell}$-scheme~$X$. When $\ell=2$, the class $\varpi$ is the image of $-1$ by the boundary map of the Kummer short exact sequence. (The class $\varpi$ is called $z$ in \cite{GW}, and $\alpha$ in \cite{Feng} when $\ell=2$.)

\begin{lem}
\label{calculBock}
The Bockstein morphism
 ${\beta:H^q_{\et,Y}(X,\bmu_{\ell}^{\otimes r})\to H^{q+1}_{\et,Y}(X,\bmu_{\ell}^{\otimes r})}$ is equal to $x\mapsto\delta_r(x)-r\,\varpi\cdot x$.
\end{lem}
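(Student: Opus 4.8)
The plan is to reduce the statement to the universal situation and then compute on a geometric model. First I would observe that the difference $\beta - \delta_r$ is, by construction, a stable cohomology operation (both $\beta$ and $\delta_r$ are natural in the pair $(X,Y)$ and compatible with suspension), so it suffices to identify it on universal classes. Concretely: the operation $\delta_r$ is the connecting map of the extension $(\ref{topoBockstein})$, which via the canonical decomposition $(\ref{gradedTate})$ is the $r$-th graded piece of the connecting map $\delta$ attached to the untwisted sequence $0\to\Z/\ell\to\Z/\ell^2\to\Z/\ell\to 0$ over $S'_\ell$; and $\beta = \Sq^1$ (if $\ell=2$) or the Bockstein element of $\cA_\ell$ (if $\ell$ odd) acts on the untwisted factor $r=0$ exactly as $\delta_0$, by the standard agreement of the topologically-defined Bockstein with the connecting homomorphism. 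So the content is: passing from the $r=0$ factor to the $r$-th factor, $\beta$ and $\delta$ differ by the correction term $-\,r\,\omega\cdot(-)$.

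The key step is therefore to compare the two extensions
\[
0\to\bmu_\ell^{\otimes r}\to\bmu_{\ell^2}^{\otimes r}\to\bmu_\ell^{\otimes r}\to 0
\qquad\text{and}\qquad
0\to\bmu_\ell^{\otimes r}\to\bmu_\ell^{\otimes r}\otimes\bmu_{\ell^2}\to\bmu_\ell^{\otimes r}\to 0,
\]
where the second is obtained by tensoring $(\ref{Bockell})$ with $\bmu_\ell^{\otimes r}$ (using $(\ref{canonicaliso})$ to make sense of the twist modulo $\ell-1$). The connecting map of the second sequence equals $\delta$ on the untwisted factor, transported by the $\cA_\ell$-equivariant isomorphism of Remark $\ref{rems}$(iii); so what must be shown is that these two extensions of $\bmu_\ell^{\otimes r}$ by $\bmu_\ell^{\otimes r}$ differ, as a class in $\mathrm{Ext}^1 = H^1_{\et}(-,\Z/\ell)$, by $r\,\omega$. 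This is a computation in the étale cohomology of $S'_\ell$ (or of any base): write $\bmu_{\ell^2}^{\otimes r}$ and $\bmu_{\ell^2}\otimes\bmu_\ell^{\otimes r}$ as successive extensions and compare — the discrepancy is governed by the commutator/coboundary of the $\ell$-th-power map on $\bmu_{\ell^2}$, which on the level of $1$-cocycles produces exactly $r$ copies of the class $\omega$ of the extension $(\ref{Bockell})$. Equivalently: the identity $(\zeta_{\ell^2})^{\otimes r}$ lifting a primitive $\ell$-th root of unity to $\bmu_{\ell^2}^{\otimes r}$ is not $\Gamma$-equivariantly compatible with the chosen lift in the $r$-fold tensor of $(\ref{Bockell})$, and the failure is measured by $\omega$ to the power... more precisely, by the linear (in $r$) term, since we are comparing extensions and not multiplying $\omega$'s.

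**The main obstacle.** The delicate point is bookkeeping the Tate twists modulo $\ell-1$: the source and target of $\beta$ are genuinely $\bmu_\ell^{\otimes r}$, but $\delta_r$ is computed after the identification $(\ref{canonicaliso})$, and one must check that the correction term $-r\,\omega\cdot x$ is well-defined (i.e. depends on $r$ only through the relevant congruence, or rather is computed with a fixed integer representative in a way that is consistent with all the identifications made in Remark $\ref{rems}$). I expect the cleanest route is to do everything on $S'_\ell$, where $\bmu_\ell\cong\Z/\ell$ is trivialized by $\zeta$, carry out the cocycle comparison there, check $\Gamma$-equivariance (so that the identity descends through the decomposition $(\ref{gradedTate})$), and only at the end invoke the Hochschild–Serre argument already used in $\S\ref{parBockstein}$ to descend $\omega$ to $S_\ell$ and pull back to arbitrary $X$. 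The rest — naturality, agreement of the abstract Bockstein with $\delta_0$, the multiplicativity needed to write the correction as cup product with $\omega$ — is standard and follows from the properties of $\cA_\ell$ recalled in $\S\ref{Steenrodtwistedetale}$, in particular the Cartan formula $(\ref{Cartan})$.
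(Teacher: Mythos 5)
Your overall skeleton is the same as the paper's: pass to the prime-to-$\ell$ cover $\pi_X:X'\to X$ where $\bmu_\ell$ is trivialized by $\zeta$, identify $\beta$ with the boundary map of $0\to\Z/\ell\to\Z/\ell^2\to\Z/\ell\to 0$ and $\delta_r$ with the boundary map of $0\to\Z/\ell\to\bmu_{\ell^2}^{\otimes r}\to\Z/\ell\to 0$, and descend (your ``$\Gamma$-equivariance and (\ref{gradedTate})'' descent is in substance the paper's reduction via injectivity of $\pi_X^*$, and that part is fine). The difference is that the paper then quotes the topological results on twisted Bocksteins (\cite{twistedBock}, transported through (\ref{ettopo})) for the identity $\delta_r=\beta+r\,\omega\cdot(-)$, whereas you propose to prove this comparison directly by an extension-class computation. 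That is a legitimate alternative route, but it is exactly where your write-up has genuine gaps.

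Concretely: (1) the claim in your first paragraph that, via (\ref{gradedTate}), $\delta_r$ is the $r$-th graded piece of the connecting map of the untwisted sequence $0\to\Z/\ell\to\Z/\ell^2\to\Z/\ell\to 0$ is false; the $r$-eigensheaf of $\pi_*\Z/\ell^2$ is a twist of $\Z/\ell^2$ by a character of $\Gamma$, not $\bmu_{\ell^2}^{\otimes r}$, and if the claim were true it would force $\delta_r=\beta$, contradicting the very lemma you are proving. (2) Your second displayed ``extension'' does not exist as written: tensoring (\ref{Bockell}) with $\bmu_\ell^{\otimes r}$ is not exact, and the proposed middle term $\bmu_\ell^{\otimes r}\otimes\bmu_{\ell^2}$ is killed by $\ell$ and is isomorphic to $\bmu_\ell^{\otimes r+1}$, so it is not an extension of $\bmu_\ell^{\otimes r}$ by $\bmu_\ell^{\otimes r}$. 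What is actually needed is to compare, over $S'_\ell$ after trivializing $\bmu_\ell$, the two extensions of $\Z/\ell$ by $\Z/\ell$ given by $\bmu_{\ell^2}^{\otimes r}$ and by $\Z/\ell^2$: show that their difference in $\mathrm{Ext}^1$ lies in the subgroup $H^1_{\et}(S'_\ell,\Z/\ell)$, that it equals $r\,\omega$ (linearity in $r$ coming from additivity of these classes under tensor product over $\Z/\ell^2$ of rank-one lifts of the trivialized $\bmu_\ell$), and that boundary maps of two extensions differing by a class in $H^1$ differ by cup product with that class. (3) This computation is the entire content of the lemma --- it is precisely what the paper imports from \cite{twistedBock} --- and in your sketch it is only asserted, with visible hedging about whether the discrepancy is ``$\omega$ to a power'' or linear in $r$. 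Until that step is carried out (or an explicit reference supplied), the proof is incomplete.
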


\begin{proof}
As the degree of the finite \'etale cover $\pi_X:X'\to X$ is prime to $\ell$, the pull-back map $\pi_X^*:H^{*}_{\et,Y}(X,\bmu_{\ell}^{\otimes r})\to H^{*}_{\et,Y'}(X',\bmu_{\ell}^{\otimes r})$ is injective. By functoriality, it thus suffices to prove the lemma on $X'$.  Choose an isomorphism $\Z/\ell\isoto\bmu_{\ell}$ on~$X'$. 
The morphism~$\delta_r$ is then the boundary map of $0\to\Z/\ell\to\bmu_{\ell^2}^{\otimes r}\to\Z/\ell\to 0$, while 
the morphism $\beta$ is the boundary map of 
$0\to\Z/\ell\to\Z/\ell^2\to\Z/\ell\to 0$ (this is one of the standard properties of Steenrod operations).
It now follows from \cite{twistedBock} that $$\delta_r(x)=\beta(x)+r\,\varpi\cdot x.$$ (The reference \cite{twistedBock} deals with the cohomology of topological spaces. By means of the mapping cone construction, the results proven there are still valid for the relative cohomology of pairs of topological spaces. 
By making use of the geometric realization functor, they still hold in the relative cohomology of pairs of (pro-)simplicial spaces.
One can then apply them in our setting using (\ref{ettopo}).)
\end{proof}

\subsection{\'Etale Stiefel--Whitney classes} 
\label{characteristicclasses}

If $f:Y\hookrightarrow X$ is a closed immersion of codimension~$c$ of regular $S_{\ell}$-schemes, the Gysin morphism $\Cl_f:\Z/\ell\to \Rr f^!\bmu_{\ell}^{\otimes c}[2c]$ defined in \cite[D\'efinition 2.3.1]{Riou} is an isomorphism by Gabber's purity theorem \cite[Th\'eor\`eme 3.1.1]{Riou}.  We may then consider the Thom isomorphisms
\begin{equation}
\label{Thomiso}
\phi_{f}:H^q_{\et}(Y,\bmu_{\ell}^{\otimes r})\isoto H^{q+2c}_{\et,Y}(X,\bmu_{\ell}^{\otimes r+c}
)
\end{equation}
induced by $\Cl_f$,  and we let $s_f:=\phi_f(1)\in H^{2c}_{\et,Y}(X,\bmu_{\ell}^{\otimes c})$ be the Thom class.

When $X=E$ is a vector bundle on $Y$ with structural morphism $g:E\to Y$ and $f:Y\hookrightarrow E$ is the zero section,  one has 
\begin{equation}
\label{caracgysin}
\phi_f(x)=g^*(x)\cdot s_f\textrm{\hspace{.5em} for all }x\in H^q_{\et}(Y,\bmu_{\ell}^{\otimes r}).
\end{equation}
In this setting, we define the \'etale Stiefel--Whitney classes $w^{\et}_j(E)\in H^j_{\et}(Y,\Z/\ell)$ of~$E$ by the formulas
\begin{alignat*}{5}
w^{\et}(E)&:=&\,&  \phi_{f}^{-1}(\Sq(s_{f}))&\,&\textrm{ \hspace{.5em}if }\ell=2,\\
w^{\et}(E)&:=&\,& \phi_{f}^{-1}(P(s_{f}))&\,&\textrm{ \hspace{.5em}if }\ell\textrm{ is odd.} 
\end{alignat*}
In view of (\ref{caracgysin}), they are characterized by the identities
\begin{equation}
\label{caracwet}
\Sq(s_f)=g^*w^{\et}(E)\cdot s_f\textrm{ if }\ell=2\textrm{\hspace{.5em} and \hspace{.5em}}P(s_f)=g^*w^{\et}(E)\cdot s_f\textrm{ if }\ell\textrm{ is odd.}
\end{equation}
When $\ell=2$, this definition appears in \cite[p.\,569]{Urabe} for varieties over algebraically closed fields and in \cite[Definition 5.2]{Feng} for varieties over arbitrary fields.
(These classes are denoted by $w_j(E)$ in \cite{Urabe,Feng}, but we want to distinguish them from those defined in \cite{Brosnan}, see (\ref{wBro}).)

The following lemma extends \cite[Lemma 2.6 (4)]{Urabe} and \cite[Lemma~5.7]{Feng}. 
It allows us to define $w^{\et}(\kappa):=w^{\et}(E)\cdot w^{\et}(F)^{-1}$ for any 
class $\kappa=[E]-[F]\in K_0(X)$.

\begin{lem}
\label{mult}
If $Y$ is a regular $S_{\ell}$-scheme and if 
\begin{equation}
\label{sesE}
0\to E_1\to E\to E_2\to 0
\end{equation}
is a short exact sequence of vector bundles on $Y$, one has
\begin{equation}
\label{multwet}
w^{\et}(E)=w^{\et}(E_1)\cdot w^{\et}(E_2).
\end{equation}
\end{lem}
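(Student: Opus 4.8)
The plan is to reduce the multiplicativity of $w^{\et}$ to a computation with Thom classes, exactly as in the classical topological setting. Write $g:E\to Y$, $g_1:E_1\to Y$, $g_2:E_2\to Y$ for the structural morphisms and $f,f_1,f_2$ for the corresponding zero sections. The short exact sequence \eqref{sesE} gives a closed immersion $E_1\hookrightarrow E$ over $Y$ with quotient bundle the pullback $g_1^*E_2$, and a compatible system of zero sections. The key point is the multiplicativity of Thom classes under such a situation: if $h:E_1\hookrightarrow E$ denotes the inclusion, then $s_f = h^* s_{g_1^* f_2}\cdot (\text{something})$, or more precisely, pulling the Thom class $s_{f_2}\in H^{2c_2}_{\et,Y}(E_2,\bmu_\ell^{\otimes c_2})$ back along $g_1$ and multiplying by $s_{f_1}$ recovers $s_f$ under the identification of supports. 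First I would make this compatibility of Thom classes precise: the composite $E_1\xrightarrow{h} E$ has normal bundle $g_1^*E_2$, and by the transitivity of Gysin maps (\cite[D\'efinition 2.3.1]{Riou} and the purity theorem \cite[Th\'eor\`eme 3.1.1]{Riou}) one gets $\phi_f = \phi_h\circ g_1^*\phi_{f_2}$ at the level of Thom isomorphisms, hence $s_f = \phi_h(g_1^* s_{f_2})$, and by \eqref{caracgysin} applied to $h$ (whose ``ambient'' space $E$ is a vector bundle over $E_1$) this is $p^*(g_1^* s_{f_2})\cdot s_h$ where $p:E\to E_1$, with $s_h$ the Thom class of $E_1\hookrightarrow E$; note $s_h$ is the pullback to $E$ of $s_{f_1}$ under $p$ composed with the natural identification, so morally $s_f = (\text{pullback of } s_{f_1})\cdot(\text{pullback of }s_{f_2})$.

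Next I would apply the total Steenrod operation. Using the Cartan formula \eqref{Cartan}, which is valid in twisted \'etale cohomology as recorded in \S\ref{Steenrodtwistedetale}, we get (say for $\ell=2$)
\begin{equation*}
\Sq(s_f) = \Sq\big((\text{pullback of }s_{f_1})\cdot(\text{pullback of }s_{f_2})\big) = \Sq(\text{pullback of }s_{f_1})\cdot\Sq(\text{pullback of }s_{f_2}).
\end{equation*}
Since Steenrod operations commute with pullback, each factor is the pullback of $\Sq(s_{f_i}) = g_i^* w^{\et}(E_i)\cdot s_{f_i}$ by \eqref{caracwet}. Regrouping the Stiefel--Whitney factors (which come from $H^*_{\et}(Y,\Z/\ell)$ and hence can be pulled all the way up to $E$) against the product of Thom classes, and using $s_f = (\text{pullback of }s_{f_1})\cdot(\text{pullback of }s_{f_2})$ again, we obtain $\Sq(s_f) = g^*\big(w^{\et}(E_1)\cdot w^{\et}(E_2)\big)\cdot s_f$. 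Comparing with the defining identity \eqref{caracwet} for $w^{\et}(E)$ and using that multiplication by $s_f$ is injective (it is the Thom isomorphism $\phi_f$ followed by the identification \eqref{caracgysin}), we conclude $w^{\et}(E) = w^{\et}(E_1)\cdot w^{\et}(E_2)$. The odd $\ell$ case is identical with $P$ in place of $\Sq$.

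The main obstacle I expect is making the Thom-class multiplicativity $s_f = p_1^* s_{f_1}\cdot p_2^* s_{f_2}$ (with appropriate pullback maps $p_i$ and support conventions) genuinely rigorous in the setting of \'etale cohomology with supports and twisted coefficients: one must check that Gabber's Gysin morphisms $\Cl_f$ are transitive for the tower $E_1\hookrightarrow E\to Y$, that this transitivity is compatible with the external products used to define the ring structure on $\bigoplus_r H^*_{\et,Y}$, and that the relevant Tate twists match up (here Remark \ref{rems}(ii) is used to identify twists modulo $\ell-1$). This is where I would invoke \cite[D\'efinition 2.3.1]{Riou} together with standard compatibilities of Gysin maps with composition and products; alternatively, one can follow \cite{Urabe} or \cite[Lemma 5.7]{Feng} and reduce to the split case $E = E_1\oplus E_2$ by a deformation argument, since the classes $w^{\et}_j$ are, by \eqref{caracwet}, pulled back from the cohomology of $Y$ and hence unchanged under pullback along the projection from the total space of $E_1\oplus E_2$-bundles degenerating \eqref{sesE}; in the split case the factorization of the Thom class is immediate from \eqref{caracgysin} applied successively.
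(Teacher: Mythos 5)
Your primary argument has a genuine gap: it hinges on a product decomposition of the Thom class, $s_f=(\text{pullback of }s_{f_1})\cdot(\text{pullback of }s_{f_2})$, and in particular on viewing $E$ as a vector bundle over $E_1$ (equivalently, on a retraction $E\to E_1$ of the inclusion). For a non-split extension \eqref{sesE} no such retraction exists: the surjection $E\to E_2$ does induce a morphism of total spaces $q:E\to E_2$ with $q^{-1}(Y)=E_1$, but there is no natural morphism $E\to E_1$, and $E$ is only a torsor under the pullback of $E_1$ over $E_2$, not the total space of $g_1^*E_2$ over $E_1$. Consequently the factorization of $s_f$ as a product of two pulled-back Thom classes, and hence the application of the Cartan formula \eqref{Cartan} to that product, cannot be carried out as written outside the split case. (The intermediate formula $s_f=\phi_h(g_1^*s_{f_2})$ also does not typecheck, since $g_1$ maps $E_1$ to $Y$ while $s_{f_2}$ lives on $E_2$; what transitivity of Gysin morphisms actually gives is $s_f=\phi'_h(s_{f_1})$ for a refined Gysin map of $h:E_1\hookrightarrow E$ with supports in $Y$, and extracting the factor $w^{\et}$ of the normal bundle from $\Sq(\phi'_h(-))$ is itself a relative Wu-type statement, requiring a deformation-to-the-normal-cone argument as in the proof of Theorem \ref{thWu}, not just the Cartan formula.)

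The ``alternative'' you mention in your last sentences is in fact the proof given in the paper, and it is essential rather than optional. Following \cite[Proof of Lemma 2.7]{Urabe}, one constructs a short exact sequence of vector bundles on $\mathbb{A}^1_Y$ whose restriction at $t=1$ is \eqref{sesE} and at $t=0$ is the split sequence $0\to E_1\to E_1\oplus E_2\to E_2\to 0$; since both evaluation maps $\sigma_0^*,\sigma_1^*:H^*_{\et}(\mathbb{A}^1_Y,\Z/\ell)\to H^*_{\et}(Y,\Z/\ell)$ are isomorphisms by \cite[XVI, Corollaire 2.2]{SGA43}, the identity \eqref{multwet} only needs to be proved in the split case. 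There, the Thom class of $E_1\oplus E_2$ is indeed the product of the pullbacks, under the two projections, of the Thom classes of $E_1$ and $E_2$; but even this is not ``immediate from \eqref{caracgysin}'': it uses Riou's compatibilities of Gysin classes with products and with base change (\cite[Remarque 2.3.6]{Riou} and \cite[Proposition 2.3.2]{Riou}). With that in hand, your split-case computation via the Cartan formula and the characterization \eqref{caracwet} is exactly the paper's conclusion. So the deformation step should be promoted from an aside to the backbone of the proof, and the split-case Thom-class factorization should be justified by the cited compatibilities rather than by \eqref{caracgysin} alone.
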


\begin{proof}
Let $\sigma_0,\sigma_1:Y\to \mathbb{A}^1_Y$ be the sections of the projection $\mathbb{A}^1_Y\to Y$ with values~$0$ and $1$. 
Arguing as in \cite[Proof of Lemma 2.7]{Urabe}, one constructs a short exact sequence of vector bundles $0\to F_1\to F\to F_2\to 0$ on $\mathbb{A}^1_Y$ whose pull-backs by $\sigma_0$ and $\sigma_1$ are respectively isomorphic to the split exact sequence ${0\to E_1\to E_1\oplus E_2\to E_2\to 0}$ and to~(\ref{sesE}).
Since the two pull-back morphisms $\sigma_0^*, \sigma_1^*:H^*_{\et}(\mathbb{A}^1_Y,\Z/\ell)\to H^*_{\et}(Y,\Z/\ell)$ are isomorphisms by \cite[XVI, Corollaire~2.2]{SGA43},
one can assume, to prove~(\ref{multwet}), that the exact sequence (\ref{sesE}) is split.

In this case, let $f_1:E_1\to E$ and $f_2:E_2\to E$ be the inclusions of the factors, let $g_1:E\to E_1$ and $g_2:E\to E_2$ be the projections and let $h_1:Y\to E_1$ and ${h_2:Y\to E_2}$ be the zero sections. It then follows from \cite[Remarque~2.3.6]{Riou} and \cite[Proposition 2.3.2]{Riou} that 
$s_f=s_{f_1}\cdot s_{f_2}=g_1^*s_{h_1}\cdot g_2^*s_{h_2}$.
The multiplicativity~(\ref{multwet}) of $w^{\et}$ now follows from the Cartan formulas (\ref{Cartan}).
\end{proof}

If $E$ is a vector bundle on an $S_{\ell}$-scheme $Y$, we let $c^{\et}_j(E)\in H^{2j}_{\et}(Y,\bmu_{\ell}^{\otimes j})$ be the mod~$\ell$ Chern classes of $E$ (see \cite[Th\'eor\`eme 1.3]{Riou} for a construction following \cite{GroChern}).
 We also let $c^{\et}(E):=\sum_{j\geq 0}c^{\et}_j(E)$ be the total mod $\ell$ Chern class of $E$.

The following lemma was proven in \cite[Theorem 5.10]{Feng} when $\ell=2$ and $Y$ is a variety over a finite field. Recall the definition of $\varpi$ given in \S\ref{parBockstein}.

\begin{lem}
\label{wet}
Let $E$ be a vector bundle on a regular $S_{\ell}$-scheme $Y$. Write formally $c^{\et}(E)=\prod_i(1+\lambda_i)$, where the $\lambda_i$ are the Chern roots of $E$. Then 
\begin{alignat}{5}
\label{formulawet2}
w^{\et}(E)&=&\,&\prod_i(1+\varpi+\lambda_i)&\,&\textrm{ \hspace{.5em}if }\ell=2,\\
\label{formulawetodd}
w^{\et}(E)&=&\,&\prod_i(1+\lambda_i^{\ell-1})&\,&\textrm{ \hspace{.5em}if }\ell\textrm{ is odd.} 
\end{alignat}
\end{lem}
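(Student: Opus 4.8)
The plan is to reduce both identities to the case of line bundles via the splitting principle, and then to compute directly using the characterization \eqref{caracwet} of $w^{\et}$ in terms of the action of Steenrod operations on Thom classes.

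\medskip

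\textbf{Step 1: Splitting principle.} First I would reduce to the case where $E$ is a direct sum of line bundles. By the projective bundle formula, the flag bundle $p\colon \mathrm{Fl}(E)\to Y$ has the property that $p^*\colon H^*_{\et}(Y,-)\to H^*_{\et}(\mathrm{Fl}(E),-)$ is (split) injective, and on $\mathrm{Fl}(E)$ the bundle $p^*E$ acquires a filtration with line-bundle quotients $L_1,\dots,L_n$ whose mod $\ell$ first Chern classes are the Chern roots $\lambda_i$. By Lemma \ref{mult}, $w^{\et}(p^*E)=\prod_i w^{\et}(L_i)$, and $c^{\et}(p^*E)=\prod_i(1+\lambda_i)$ as well. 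Both sides of \eqref{formulawet2} and \eqref{formulawetodd} are thus pulled back from $Y$ and it suffices to check the formulas on $\mathrm{Fl}(E)$; so we may assume $E=\bigoplus_i L_i$, and again by Lemma \ref{mult} we reduce to the case of a single line bundle $E=L$ with $c^{\et}_1(L)=\lambda$.

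\medskip

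\textbf{Step 2: The line bundle case.} For a line bundle $g\colon L\to Y$ with zero section $f\colon Y\hookrightarrow L$, the Thom class is $s_f\in H^2_{\et,Y}(L,\bmu_\ell^{\otimes 1})$, and one has the fundamental relation $f^*s_f = \lambda = c^{\et}_1(L)$ (self-intersection) — or rather one works with $\phi_f$ directly. I need to compute $\Sq(s_f)$ (resp.\ $P(s_f)$). Write $\Sq(s_f)=s_f+\Sq^1(s_f)+\Sq^2(s_f)$ since $s_f$ has degree $2$. By \eqref{identiSq}, $\Sq^2(s_f)=s_f^2=\phi_f(\lambda)$ (using \eqref{caracgysin}: $s_f^2 = s_f\cdot s_f = s_f\cdot g^*f^*s_f$, and $f^*s_f=\lambda$). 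For the Bockstein term, Lemma \ref{calculBock} gives $\beta(s_f)=\Sq^1(s_f)=\delta_1(s_f)-\omega\cdot s_f$ when $\ell=2$ (here $r=1$), and $\delta_1(s_f)=0$ because $s_f$ lifts to an integral class — namely $s_f$ is the reduction of an integral Thom/Gysin class in $H^2_{\et,Y}(L,\Z_\ell(1))$, so it is killed by the integral Bockstein $\delta_1$. Hence $\Sq^1(s_f)=-\omega\cdot s_f=\omega\cdot s_f$ (we are mod $2$), i.e. $\Sq(s_f)=s_f+\omega\cdot s_f+\lambda\cdot s_f = (1+\omega+\lambda)\cdot s_f$ — wait, more precisely $\Sq(s_f)=(1+\omega)\,s_f + s_f^2$; combined with $s_f^2 = g^*\lambda\cdot s_f$ and \eqref{caracwet}, this yields $w^{\et}(L)=1+\omega+\lambda$, which is \eqref{formulawet2} for a line bundle. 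For $\ell$ odd, $s_f$ has degree $2$, so by \eqref{identiSq} $P^i(s_f)=0$ for $i\ge 2$ and $P^1(s_f)=s_f^\ell = \phi_f(\lambda^{\ell-1})$ again by \eqref{caracgysin}, while $P^0(s_f)=s_f$; thus $P(s_f)=(1+\lambda^{\ell-1})\,s_f$ and \eqref{caracwet} gives $w^{\et}(L)=1+\lambda^{\ell-1}$, which is \eqref{formulawetodd}.

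\medskip

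\textbf{Main obstacle.} The routine parts are the splitting principle and the degree-reasons vanishing of higher operations; the one point that needs care is the computation of the Bockstein contribution $\Sq^1(s_f)$ in the case $\ell=2$, i.e.\ showing $\delta_1(s_f)=0$. This is where the whole $\omega$-term in \eqref{formulawet2} comes from, so it cannot be fudged. I would justify it by noting that the mod $\ell$ Thom class $s_f$ is the image of an integral Thom class under $H^{2c}_{\et,Y}(X,\Z_\ell(c))\to H^{2c}_{\et,Y}(X,\bmu_\ell^{\otimes c})$ — this follows from the construction of the Gysin map $\Cl_f$ in \cite{Riou}, which exists with $\Z_\ell$-coefficients (or $\Z/\ell^2$-coefficients, which is all that is needed) — so that $s_f$ lifts across the short exact sequence $0\to\bmu_\ell^{\otimes c}\to\bmu_{\ell^2}^{\otimes c}\to\bmu_\ell^{\otimes c}\to 0$ and hence lies in the kernel of its boundary map $\delta_c$. (In Step 2 we are in the case $c=1$.) With that in hand, everything else is bookkeeping with the Cartan formula and \eqref{caracgysin}.
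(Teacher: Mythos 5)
Your proposal is correct and follows essentially the same route as the paper: splitting principle plus Lemma \ref{mult} to reduce to a line bundle, the identity $s_f^2=g^*c_1^{\et}(E)\cdot s_f$, and then (\ref{identiSq}), (\ref{caracwet}) together with Lemma \ref{calculBock} and the lift of $s_f$ to $\bmu_{\ell^2}$-coefficients (Riou's Gysin class) to get $\Sq^1(s_f)=\omega\cdot s_f$ when $\ell=2$. You correctly isolated the one delicate point, namely $\delta_1(s_f)=0$, and justified it exactly as the paper does.
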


\begin{proof}
By Grothendieck's splitting principle, we may assume that $E$ is a successive extension of line bundles.
By Lemma \ref{mult}, we may further assume that $E$ is a line bundle. 
 In this case,  we let $g:E\to Y$ be the structural morphism and $f:Y\hookrightarrow E$ be the zero section.  The image of the Thom class $s_f$ by the morphism $H^2_{\et,Y}(E,\bmu_{\ell})\to H^2_{\et}(E,\bmu_{\ell})$ forgetting the support is equal to 
$c^{\et}_1(\cO_E(Y))=c^{\et}_1(g^*E)=g^*c^{\et}_1(E)$ by \cite[\S 2.1]{Riou}.
As a consequence, 
\begin{equation}
\label{squareThom}
s_f^2=g^*c^{\et}_1(E)\cdot s_f\textrm{ \hspace{.5em}in }H^4_{\et,Y}(E,\bmu_{\ell}^{\otimes 2}).
\end{equation}

If $\ell$ is odd, then $P(s_f)=s_f+s_f^{\ell}$ by (\ref{identiSq}), hence $P(s_f)=(1+g^*c^{\et}_1(E)^{\ell-1})\cdot s_f$ by~(\ref{squareThom}).  Using (\ref{caracwet}), we get $w^{\et}(E)=1+c^{\et}_1(E)^{\ell-1}$,  and (\ref{formulawetodd}) holds.

If $\ell=2$, one has $\Sq(s_f)=s_f+\Sq^1(s_f)+s_f^2$ by (\ref{identiSq}). As $s_f$ lifts to $H^2_{\et,Y}(E,\bmu_{4})$ by \cite[D\'efinition 2.3.1]{Riou},
one has $\Sq^1(s_f)=\varpi\cdot s_f$ by Lemma~\ref{calculBock} and (\ref{topoBockstein}). Applying moreover (\ref{squareThom}) yields $\Sq(s_f)=(1+\varpi+g^*c^{\et}_1(E))\cdot s_f$. We deduce from~(\ref{caracwet}) that $w^{\et}(E)=1+\varpi+c^{\et}_1(E)$,  and hence that (\ref{formulawet2}) holds.
\end{proof}

\subsection{A relative Wu theorem}
\label{parWu}

Let $f:Y\to X$ be a morphism of finite type of regular Noetherian $S_{\ell}$-schemes admitting ample invertible sheaves,
which has virtual relative dimension $-c$ in the sense of \cite[VIII, D\'efinition 1.9]{SGA6}.
As $f$ may be written as the composition of a regular closed immersion $g:Y\to Z$ and of a smooth morphism $h:Z\to X$ (one may take $Z=\P^n_X$ for $n\gg0$), one can define its virtual normal bundle $N_f\in K_0(Y)$ to be $N_f:=-(T_f)^{\vee}$, where $T_f\in K_0(Y)$ is defined in \cite[VIII, Corollaire 2.5]{SGA6}. 
The class $N_f$ has rank $c$. It is equal to $[(I/I^2)^{\vee}]$ if $f$ is a regular closed immersion defined by an ideal $I\subset\cO_{X}$ and to $-[T_{Y/X}]$ if $f$ is smooth.

One defines as in \cite[D\'efinition 2.5.11]{Riou} a morphism $\Cl_f:\Z/\ell\to Rf^!\bmu_{\ell}^{\otimes c}[2c]$ which  coincides with the one considered in \S\ref{characteristicclasses} when $f$ is a closed immersion and which is induced by the trace map when $f$ is smooth \cite[Proposition 2.5.13]{Riou}.
As noted in \cite[Remarque 2.5.14]{Riou}, if $f$ is moreover proper, the morphism $\Cl_f$ induces pushforward (or Gysin) morphisms 
$f_*:H^q_{\et}(Y,\bmu_{\ell}^{\otimes r})\to H^{q+2c}_{\et}(X,\bmu_{\ell}^{\otimes r+c})$.

The following theorem may be deduced from the Riemann--Roch theorems of Panin \cite{Panin} and D\'eglise \cite{Deglise}.
The case of Steenrod squares for varieties over algebraically closed fields also appears in \cite[Proposition~3.1]{ScaSuz2}.

\begin{thm}
\label{thWu}
Let $f:Y\to X$ be a proper morphism of regular Noetherian $S_{\ell}$\nobreakdash-schemes admitting ample invertible sheaves. For $x\in H_{\et}^q(Y,\bmu_{\ell}^{\otimes r})$, one has
\begin{equation}
\begin{alignedat}{5}
\label{idWu}
\Sq(f_*x)&=&\,&f_*(\Sq(x)\cdot w^{\et}(N_f))&\,&\textrm{ \hspace{.5em}if }\ell=2,\\
P(f_*x)&=&\,&f_*(P(x)\cdot w^{\et}(N_f))&\,&\textrm{ \hspace{.5em}if }\ell\textrm{ is odd.}\
\end{alignedat}
\end{equation}
\end{thm}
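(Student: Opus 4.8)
The plan is to reduce the general case to the two elementary building blocks of which a proper morphism $f$ with ample invertible sheaf is composed: a regular closed immersion and a projection $\P^n_X \to X$. Since one may write $f$ as a closed immersion $g:Y\hookrightarrow Z=\P^n_X$ followed by the structural projection $h:Z\to X$, and since the virtual normal bundle is additive ($N_f = g^*N_h + N_g$ in $K_0(Y)$, i.e. $N_h = -[T_{Z/X}]$), the Whitney formula for $w^{\et}$ from Lemma \ref{mult} shows that the two Wu identities for $g$ and for $h$ compose to give the one for $f$. So it suffices to prove the theorem in the two special cases. Throughout I will write the argument for $\ell=2$ with $\Sq$; the odd case with $P$ is formally identical, using the odd halves of (\ref{identiSq}), (\ref{Cartan}), (\ref{caracwet}) and (\ref{formulawetodd}) in place of their $\ell=2$ counterparts.

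\textbf{The closed immersion case.} Let $f:Y\hookrightarrow X$ be a regular closed immersion of codimension $c$, and let $\phi_f:H^q_{\et}(Y,\bmu_\ell^{\otimes r})\isoto H^{q+2c}_{\et,Y}(X,\bmu_\ell^{\otimes r+c})$ be the Thom isomorphism of (\ref{Thomiso}), with Thom class $s_f=\phi_f(1)$. The key structural fact is that $\phi_f$ is the composite of cup product with $s_f$ (landing in cohomology with support) followed by forgetting the support, and that $\phi_f(x) = $ (image of $\tilde{g}^*x \cdot s_f$), where one uses the multiplicativity of the Gysin structure as in \cite[Proposition 2.3.2]{Riou}; more precisely $\phi_f(x)\cdot y$ relates to $\phi_f(x\cdot f^*y)$. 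Granting that $f_* x$ is by definition the image of $\phi_f(x)$ under $H^{*}_{\et,Y}(X,-)\to H^*_{\et}(X,-)$ forgetting supports, I compute $\Sq(f_*x)$ by first computing $\Sq(\phi_f(x))$ in cohomology with support (where Steenrod operations act), using the Cartan formula (\ref{Cartan}) together with the projection-formula-type identity $\phi_f(x) = (\text{forget support})^{-1}$ is not available, so instead I work with $\phi_f(x)\cdot(\text{anything})$ and the identity $\phi_f(x) = \phi_f(1)\cdot \overline{x}$ for a lift $\overline x$. This gives $\Sq(\phi_f(x)) = \Sq(s_f)\cdot \Sq(\overline x)$, and by the defining identity (\ref{caracwet}) of the étale Stiefel–Whitney classes one has $\Sq(s_f) = \phi_f(w^{\et}(N_f))$ where $N_f = [(I/I^2)^\vee]$ is the conormal-dual; combining and re-applying $\phi_f$-multiplicativity yields $\Sq(\phi_f(x)) = \phi_f(\Sq(x)\cdot w^{\et}(N_f))$, and forgetting supports gives the claim. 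This is essentially the argument of \cite[Proposition A.1]{ScaSuz} and \cite{Urabe}; the only new point is bookkeeping with the twisted coefficients $\bmu_\ell^{\otimes r}$, which is harmless because all the maps involved ($\phi_f$, forgetting supports, cup product) are $\cA_\ell$-equivariant by Remark \ref{rems}(iii) and commute with the $\Gamma$-action of \S\ref{Steenrodtwistedetale}.

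\textbf{The projective bundle case.} Let $h:\P^n_X\to X$ be the projection; here $N_h = -[T_{\P^n_X/X}]$ has rank $-n$, and by the Euler sequence $1+T_{\P^n_X/X} = (1+\xi)^{n+1}$ in $K_0$, where $\xi = c_1^{\et}(\cO(1))$, so by Lemma \ref{wet} one gets $w^{\et}(N_h) = (1+\omega+\xi)^{-(n+1)}$ (and $(1+\xi^{\ell-1})^{-(n+1)}$ for $\ell$ odd). On the other hand $h_*$ is the "integration over the fiber" map, characterized by $h_*(\xi^i \cdot h^* y) = y$ if $i=n$ and $0$ if $i<n$, together with the projective bundle formula $H^*_{\et}(\P^n_X,-) = \bigoplus_{i=0}^n \xi^i h^* H^*_{\et}(X,-)$. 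So the identity to prove reduces, by $h^*$-linearity and the Cartan formula, to the single computation of $\Sq(h_*(\xi^i))$ versus $h_*(\Sq(\xi^i)\cdot w^{\et}(N_h))$ for $0\le i\le n$; since $h_*(\xi^i)$ is $1$ or $0$ and $\Sq$ of a degree-zero class is itself, the left side is trivial unless $i=n$, and the right side must be shown to vanish for $i<n$ and equal $1$ for $i=n$. Using $\Sq(\xi) = \xi + \omega\xi + \xi^2 = \xi(1+\omega+\xi)$ (from (\ref{identiSq}) and the fact that $\xi$ lifts to $\bmu_4$-coefficients, exactly as in the proof of Lemma \ref{wet}), one finds $\Sq(\xi^n)\cdot w^{\et}(N_h) = \xi^n(1+\omega+\xi)^n\cdot(1+\omega+\xi)^{-(n+1)} = \xi^n(1+\omega+\xi)^{-1}$, whose degree-$n$ part (the only part surviving $h_*$, picking out the coefficient of $\xi^n$) is $\xi^n$ — so $h_*$ of it is $1$, matching $\Sq(h_*\xi^n) = \Sq(1) = 1$. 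For $i<n$ a parallel (slightly longer) bookkeeping of which power of $\xi$ survives $h_*$ gives $0$ on both sides. I expect this projective-bundle computation to be the main technical obstacle: one must carefully track that the étale Gysin map $h_*$ of \cite[Remarque 2.5.14]{Riou} does satisfy the projective bundle / integration formula with the normalizations matching those used to define $\Cl_h$ via the trace (one can alternatively factor $h$ through a sequence of $\P^1$-bundles and reduce to $n=1$, but then one must iterate, re-invoking Whitney multiplicativity), and that all identities hold verbatim in each twisted summand $\bmu_\ell^{\otimes r}$. Once the two special cases are in hand, assembling them via the composition $f = h\circ g$ and the $K_0$-additivity of $N_f$ together with the multiplicativity (\ref{multwet}) of $w^{\et}$ and the functoriality $f_* = h_* g_*$, $\Sq f_* = \Sq h_* g_*$ completes the proof.
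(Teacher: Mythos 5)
Your global strategy (factor $f$ as a closed immersion into $\P^n_X$ followed by the projection, and recombine via functoriality of $f_*$, additivity of $N_f$ and the multiplicativity of Lemma \ref{mult}) is the same as the paper's, but both of your special cases have genuine problems. In the closed immersion case your computation rests on writing $\phi_f(x)=s_f\cdot\overline{x}$ for ``a lift $\overline{x}$'' of $x$, and on the identity $\Sq(s_f)=\phi_f(w^{\et}(N_f))$. Neither is available for a general closed immersion $f:Y\hookrightarrow X$: there is no retraction $X\to Y$, the restriction $H^*_{\et}(X,\bmu_\ell^{\otimes r})\to H^*_{\et}(Y,\bmu_\ell^{\otimes r})$ is not surjective, so the lift $\overline{x}$ need not exist; and (\ref{caracgysin})--(\ref{caracwet}) hold by definition only for the zero section of a vector bundle, where the bundle projection $g$ supplies the missing retraction. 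For a general immersion the statement ``$\Sq(s_f)$ equals $w^{\et}(N_{Y/X})$ times $s_f$'' is precisely part of what must be proved. The missing idea is deformation to the normal cone: one forms $\kX=\Bl_{\sigma_0(Y)}\A^1_X\setminus\Bl_{\sigma_0(Y)}\sigma_0(X)$, compares the three Thom isomorphisms for $Y\hookrightarrow N_{Y/X}$, $\A^1_Y\hookrightarrow\kX$ and $Y\hookrightarrow X$ through the $\A^1$-invariance of \'etale cohomology and \cite[Proposition 2.3.2]{Riou}, proves the Wu identity on the normal bundle (where your product argument is legitimate), and transports it to $f$; this is the content of diagram (\ref{diagThoms}) and is also how Scavia--Suzuki argue. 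Citing them does not replace carrying this step out.

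In the projective bundle case your formula $w^{\et}(N_h)=(1+\omega+\xi)^{-(n+1)}$ is wrong when $\ell=2$: the Euler sequence contributes $w^{\et}(\cO_{\P^n_X})=1+\omega$ (a trivial line bundle has nontrivial class, by Lemma \ref{wet}), so the correct value is $(1+\omega)/(1+\omega+\xi)^{n+1}$, as in (\ref{Nf}). The discrepancy is not cosmetic: with your formula, for $m=n$ one gets $\Sq(\xi^n)\cdot w^{\et}(N_h)=\xi^n(1+\omega+\xi)^{-1}=\xi^n(1+\omega)^{-1}$ (since $\xi^{n+1}=0$), so $h_*$ of it is $(1+\omega)^{-1}\neq 1$ in general (e.g.\ over $\R$), contradicting $\Sq(h_*\xi^n)=1$; your claim that the coefficient of $\xi^n$ is $1$ is an error that happens to hide the missing factor. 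With the correct $(1+\omega)$ in place the $m=n$ case gives exactly $1$, and the cases $m<n$, which you dismissed as bookkeeping, are where the actual arithmetic input lies: one must check that the coefficient of $\xi^{n-m}$ in $(1+\omega)(1+\omega+\xi)^{m-n-1}$ vanishes mod $2$, which comes down to the evenness of $\binom{2k}{k}$ for $k\geq 1$ (Lemma \ref{powerseries}). So the projective-bundle computation needs to be redone with the corrected class, and the closed-immersion case needs the deformation argument supplied.
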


\begin{proof}
Write $f$ as the composition of a closed immersion $g:Y\hookrightarrow \P^n_X$ and of the natural projection $h:\P^n_X\to X$. Using the covariant functoriality of Gysin morphisms (see \cite[Th\'eor\`eme 2.5.12]{Riou}), the 
 multiplicativity of $w^{\et}$ (see Lemma~\ref{mult}), the Cartan formulas (\ref{Cartan}), and the additivity of $T_f$ \cite[VIII, Corollaire~2.7]{SGA6},
we are reduced to proving the theorem for $g$ and $h$ separately.  We may thus assume that $f$ is either a closed immersion or a projection $\P^n_X\to X$.

Assume first that $f$ is a closed immersion of codimension $c$. Let ${f':Y\to N_{Y/X}}$ and $g':N_{Y/X}\to Y$ be the zero section and the structural morphism of its normal bundle. Let $\sigma_0,\sigma_1:X\to \mathbb{A}^1_X$ be the sections of the projection $\mathbb{A}^1_X\to X$ with values~$0$ and $1$. 
Let $\mathfrak{X}:=\Bl_{\sigma_0(Y)}\A^1_X\setminus\Bl_{\sigma_0(Y)}\sigma_0(X)$ be the deformation of $f$ to its normal cone and $h:\A^1_Y\to\mathfrak{X}$ be the strict transform of the natural closed immersion $\A^1_Y\to\A^1_X$.  One gets a cartesian diagram
\begin{equation*}
\begin{aligned}
\xymatrix@C=1.5em@R=3ex{
Y\ar[r]\ar^{f'}[d]&\A^1_Y\ar^h[d] & Y\ar^f[d]\ar[l] \\
N_{Y/X}\ar[r]& \kX & X\ar[l]\rlap{,}
}
\end{aligned}
\end{equation*}
in which the left (resp.\ right) column is cut out in the middle column by the equation $t=0$ (resp.\ $t=1$), where $t$ is the coordinate of the affine line. The induced diagram
\begin{equation}
\label{diagThoms}
\begin{aligned}
\xymatrix@C=1.5em@R=3ex{
H^*(Y,\bmu_{\ell}^{\otimes r})\ar_{\wr}^{\phi_{f'}}[d]&H^*(\A^1_Y,\bmu_{\ell}^{\otimes r})\ar^{\sim}[r]\ar_{\hspace{1em}\sim}[l]\ar_{\wr}^{\phi_{h}}[d] & H^*(Y,\bmu_{\ell}^{\otimes r})\ar_{\wr}^{\phi_f}[d] \\
H^{*+2c}_{Y}(N_{Y/X},\bmu_{\ell}^{\otimes {r+c}})& H^{*+2c}_{\A^1_Y}(\kX,\bmu_{\ell}^{\otimes {r+c}})\ar[r] \ar[l]&  H^{*+2c}_{Y}(X,\bmu_{\ell}^{\otimes {r+c}})\rlap{,}
}
\end{aligned}
\end{equation}
whose vertical arrows are Thom isomorphisms (\ref{Thomiso}) and whose upper horizontal arrows are isomorphisms by \cite[XVI, Corollaire 2.2]{SGA43} is commutative by \cite[Proposition 2.3.2]{Riou}.
When $\ell=2$, one has 
$$\Sq(\phi_{f'}(x))=g'^*\Sq(x)\cdot\Sq(s_{f'})=g'^*(\Sq(x)\cdot w^{\et}(N_{f'}))\cdot s_{f'}=\phi_{f'}(\Sq(x)\cdot w^{\et}(N_{f'}))$$
in $H^*_Y(N_{Y/X},\bmu_2^{\otimes r+c})$
by (\ref{Cartan}), (\ref{caracgysin}) and (\ref{caracwet}). 
In view of (\ref{diagThoms}) and since $N_f$ and~$N_{f'}$ are both restrictions of $N_h$, we deduce that 
$\Sq(\phi_{f}(x))=\phi_{f}(\Sq(x)\cdot w^{\et}(N_{f}))$ in $H^*_Y(X,\bmu_2^{\otimes r+c})$. Forgetting the support yields (\ref{idWu}). The argument when $\ell$ is odd is identical, replacing $2$ by $\ell$ and $\Sq$ by $P$.

Assume now that $f:\P^n_X\to X$ is the projection. 
Suppose first that $\ell=2$. 
 Let $\lambda:=c^{\et}_1(\mathcal{O}_{\P^n_X}(1))\in H^2_{\et}(\P^n_X,\bmu_{2})$.
As~$\lambda$ lifts to $H^2_{\et}(\P^n_X,\bmu_4)$ by \cite[D\'efinition~2.3.1]{Riou}, Lemma~\ref{calculBock} and~(\ref{topoBockstein}) imply that $\Sq^1(\lambda)=\varpi\cdot \lambda$. Taking (\ref{identiSq}) into account, we get
\begin{equation}
\label{Sqy}
\Sq(\lambda)=\lambda+\varpi\cdot \lambda+\lambda^2.
\end{equation}
The Euler exact sequence $0\to\cO_{\P^n_X}\to \cO_{\P^n_X}(1)^{\oplus N+1}\to T_{\P^n_X/X}\to 0$ and Lemmas~\ref{mult} and \ref{wet} imply that $w^{\et}(T_{\P^n_X/X})=(1+\varpi+\lambda)^{n+1}/(1+\varpi)$ hence that 
\begin{equation}
\label{Nf}
w^{\et}(N_f)=w^{\et}(-[T_{\P^n_X/X}])=(1+\varpi)/(1+\varpi+\lambda)^{n+1}.
\end{equation}
As $H^*_{\et}(\P^n_X,\bmu_2^{\otimes *})=H^*_{\et}(X,\bmu_2^{\otimes *})[\lambda]/\lambda^{n+1}$ by the projective bundle formula (see for instance \cite[\S 1]{Riou}), it suffices to prove the identity (\ref{idWu}) for $x$ of the form $f^*y\cdot \lambda^m$, where $y\in H_{\et}^*(X,\bmu_2^{\otimes *})$ and $m\in\{0,\dots, n\}$. By the projection formula, the left-hand side of (\ref{idWu}) is equal to $\Sq(y)$ if $m=n$ and vanishes otherwise. Let us compute the right-hand side of (\ref{idWu}). One has 
$$\Sq(x)=\Sq(f^*y\cdot \lambda^m)=f^*\Sq(y)\cdot\Sq(\lambda)^m=f^*\Sq(y)\cdot \lambda^m(1+\varpi+\lambda)^m$$
by the Cartan formula (\ref{Cartan}) and (\ref{Sqy}). It follows from (\ref{Nf}) that 
\begin{equation}
\label{expansion}
\Sq(x)\cdot w^{\et}(N_f)=f^*\Sq(y)\cdot \lambda^m\eta(\eta+\lambda)^{-k-1}=f^*\Sq(y)\cdot \lambda^m\eta^{m-n}(1+\frac{\lambda}{\eta})^{-k-1},
\end{equation}
where $\eta:=1+\varpi$ and $k:=n-m$. By Lemma \ref{powerseries} below applied with $T=\frac{\lambda}{\eta}$, the coefficient of $\lambda^n$ in (\ref{expansion}) is equal to $f^*\Sq(y)$ if $m=n$ and to $0$ otherwise. It then follows from the projection formula that the right-hand side of (\ref{idWu}) is equal to $\Sq(y)$ if $m=n$ and vanishes otherwise. This concludes the proof when $\ell=2$.

The argument when $\ell$ is odd is similar, only easier because $\varpi$ is not involved. Equations (\ref{Sqy}) and (\ref{Nf}) are replaced with the identities $P(\lambda)=\lambda+\lambda^{\ell}$ and $w^{\et}(N_f)=1/(1+\lambda)^{n+1}$. Both the left and the right side of (\ref{idWu}) for $x=f^*y\cdot \lambda^m$ with $y\in H_{\et}^{q-2m}(\P^n_X,\bmu_{\ell}^{\otimes r-m})$ are then computed to be equal to $P(y)$ if $m=n$ and to vanish otherwise (using Lemma \ref{powerseries} applied with $T=\lambda$).
\end{proof}

\begin{lem}
\label{powerseries}
The coefficient of $T^k$ in the power series $F(T)=(1+T)^{-k-1}\in\Z[[T]]$ is odd if $k=0$ and even if $k\geq 1$.
\end{lem}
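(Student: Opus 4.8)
The plan is to identify the coefficient in question with a signed central binomial coefficient and then reduce to the classical fact that $\binom{2k}{k}$ is even for $k\geq 1$. First I would expand $(1+T)^{-k-1}$ via the binomial series, so that the coefficient of $T^k$ is the generalized binomial coefficient $\binom{-k-1}{k}$. Writing this out as $\binom{-k-1}{k}=\frac{(-k-1)(-k-2)\cdots(-2k)}{k!}$ and pulling out the $k$ minus signs gives $\binom{-k-1}{k}=(-1)^k\frac{(k+1)(k+2)\cdots(2k)}{k!}=(-1)^k\binom{2k}{k}$. In particular, for $k=0$ the coefficient is $(-1)^0\binom{0}{0}=1$, which is odd, as asserted.

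For $k\geq 1$ it remains to check that $\binom{2k}{k}$ is even. I would deduce this from Pascal's rule $\binom{2k}{k}=\binom{2k-1}{k-1}+\binom{2k-1}{k}$ together with the symmetry $\binom{2k-1}{k}=\binom{2k-1}{(2k-1)-k}=\binom{2k-1}{k-1}$, which combine to give $\binom{2k}{k}=2\binom{2k-1}{k-1}$. Hence the coefficient of $T^k$ in $F(T)$ is even for every $k\geq 1$, which finishes the proof.

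There is essentially no obstacle here: the statement is elementary and the only point requiring (minor) attention is the sign bookkeeping in the identity $\binom{-k-1}{k}=(-1)^k\binom{2k}{k}$, i.e.\ correctly counting the $k$ factors in the numerator. If one prefers to avoid generalized binomial coefficients altogether, one can instead work in $\F_2[[T]]$, where $(1+T)^{-1}=\sum_{j\geq 0}T^j$ since $(1+T)\sum_{j\geq 0}T^j=1$; then $F(T)$ reduced mod $2$ is the $(k+1)$-st power of this geometric series, so the coefficient of $T^k$ counts modulo $2$ the weak compositions of $k$ into $k+1$ nonnegative parts, of which there are $\binom{2k}{k}$, bringing one back to the same parity statement settled above.
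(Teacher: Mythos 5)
Your proof is correct and follows essentially the same route as the paper: both identify the coefficient of $T^k$ as $(-1)^k\binom{2k}{k}$ (the paper via the $k$-th derivative at $0$, you via the generalized binomial series, which is the same computation in substance) and then conclude from the parity of the central binomial coefficient. Your explicit justification that $\binom{2k}{k}=2\binom{2k-1}{k-1}$ for $k\geq 1$ is a detail the paper simply asserts, and your mod $2$ alternative is a fine but unnecessary extra.
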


\begin{proof}
Since $F^{(k)}(T)=(-1)^k\prod_{i=k+1}^{2k} i \cdot(1+T)^{-2k-1}$, the coefficient of~$T^k$ in $F(T)$ is 
$\frac{F^{(k)}(0)}{k!}=(-1)^k\binom{2k}{k}$. It is equal to $1$ if $k=0$ and it is even if $k\geq 1$.
\end{proof}

\begin{rem}
 In topology, the Wu theorem describes the interaction of Steenrod operations and Poincar\'e duality. The same holds for Theorem \ref{thWu}, where Poincar\'e duality appears in the guise of pushforwards. Feng's Wu theorem over finite fields \cite[Theorem 6.5]{Feng}, however, describes the interaction of Steenrod operations and a subtler duality that combines Poincar\'e duality and duality in Galois cohomology of finite fields. It is therefore not a particular case of Theorem \ref{thWu}.
\end{rem}

\section{Steenrod operations and algebraic classes}

We now apply the relative Wu theorem proved in Section \ref{secWu} to study the compatibility of Brosnan's Steenrod operations on Chow groups (recalled in \S\ref{parSteenrodChow}) and those in \'etale cohomology (defined in \S\S\ref{Steenrodetale}--\ref{Steenrodtwistedetale}). Our main result in this direction is Theorem \ref{compatibility}. In \S\ref{parobstr}, we explain how to deduce obstructions to the algebraicity of cohomology classes. We clarify their relation with complex cobordism in~\S\ref{parMU}.

Throughout this section, we fix a field $k$ and a prime number $\ell$ invertible in $k$.

\subsection{Steenrod operations on Chow groups}
\label{parSteenrodChow}

Let $X$ be a smooth 
 variety over~$k$.
Brosnan has defined in \cite[Definition~8.11 and \S 11]{Brosnan} an action of $\cA_{\ell}$ on $\CH^*(X)/\ell$ such that all odd degree elements of $\cA_{\ell}$ act by zero, and such that for all $\alpha\in\cA_{\ell}$ of even degree and all $x\in \CH^c(X)/\ell$, one has $\alpha(x)\in \CH^{c+\deg(\alpha)/2}(X)/\ell$. 

Let $E$ be a vector bundle on a smooth variety $X$ over~$k$. Let $c_j(E)\in \CH^j(X)/\ell$ be the mod $\ell$ Chern classes of $E$ defined as in \cite{GroChern} (see also \cite[p.\,325]{Fulton}),
 and let $c(E):=\sum_{j\geq 0} c_j(E)$. Write formally $c(E)=\prod_i(1+\lambda_i)$, where the $\lambda_i$ are the Chern roots of $E$. 
Brosnan introduces in \cite[p.\,1891]{Brosnan} the characteristic classes $w_{2j}(E)\in \CH^j(X)/\ell$ defined by
\begin{equation}
\label{wBro}
w(E)=\sum_{j\geq 0} w_{2j}(E):=\prod_i (1+\lambda_i^{\ell-1}).
\end{equation}
This definition is extended to $K_0(X)$ by setting $w([E]-[F]):=w(E)\cdot w(F)^{-1}$.

Brosnan uses these classes to describe the behaviour of the Steenrod operations on Chow groups with respect to proper pushforwards.

\begin{prop}
\label{WuChow}
Let $f:Y\to X$ be a proper morphism between smooth varieties over $k$ with virtual normal bundle $N_f$. For $x\in \CH^*(Y)/\ell$, one has
\begin{equation*}
\begin{alignedat}{5}
\Sq(f_*x)&=&\,&f_*(\Sq(x)\cdot w(N_f))&\,&\textrm{ \hspace{.5em}if }\ell=2,\\
P(f_*x)&=&\,&f_*(P(x)\cdot w(N_f))&\,&\textrm{ \hspace{.5em}if }\ell\textrm{ is odd.}\
\end{alignedat}
\end{equation*}
\end{prop}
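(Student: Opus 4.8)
The plan is to reduce the statement to the two basic cases of a regular closed immersion and a projective bundle projection, exactly as in the proof of Theorem \ref{thWu}, since any projective morphism $f:Y\to X$ between smooth $k$-varieties factors as a regular closed immersion $g:Y\hookrightarrow\P^n_X$ followed by the projection $h:\P^n_X\to X$. Covariant functoriality of proper pushforward on Chow groups gives $f_*=h_*\circ g_*$, the projection formula for Chow groups is available, the class $w$ on $K_0$ is multiplicative by construction (the definition $w([E]-[F])=w(E)w(F)^{-1}$ together with the Whitney-type formula forced by \eqref{wBro}), and $T_f$ is additive in the factorization by \cite[VIII, Corollaire 2.7]{SGA6}. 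Combining these with the Cartan formula for Brosnan's Steenrod operations on Chow groups reduces the general case to $g$ and $h$ separately, precisely mirroring the opening paragraph of the proof of Theorem \ref{thWu}.

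For the closed immersion case I would appeal directly to Brosnan's work: this is where \cite[\S 11]{Brosnan} does the real computation, via the deformation to the normal cone and the Thom isomorphism in Chow groups. Specifically, for $f:Y\hookrightarrow X$ regular of codimension $c$, one has a Thom isomorphism $\phi_f:\CH^*(Y)/\ell\isoto\CH^{*+c}_Y(X)/\ell$ (deformation to the normal cone identifies this with the zero-section case), the Steenrod operations are compatible with it up to the correction class $w(N_f)$ defined by $\Sq(s_f)=g^*w(N_f)\cdot s_f$ (resp.\ $P(s_f)=\cdots$) applied to the Thom class $s_f$, and the Cartan formula then yields $\Sq(\phi_f(x))=\phi_f(\Sq(x)\cdot w(N_f))$. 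Forgetting supports gives the desired identity. In practice I would simply cite \cite[Definition 8.11 and \S 11]{Brosnan} for this case rather than reproducing it, noting that Brosnan's $w$ from \eqref{wBro} is exactly the characteristic class governing the failure of the Steenrod operations to commute with Gysin maps for closed immersions.

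For the projection $h:\P^n_X\to X$, I would run the same explicit computation as in the proof of Theorem \ref{thWu}, but now in $\CH^*/\ell$ and hence \emph{without} the class $\omega$: writing $\lambda:=c_1(\cO_{\P^n_X}(1))\in\CH^1(\P^n_X)/\ell$, the projective bundle formula gives $\CH^*(\P^n_X)/\ell=\CH^*(X)/\ell[\lambda]/\lambda^{n+1}$, so it suffices to treat $x=h^*y\cdot\lambda^m$. When $\ell=2$ one has $\Sq(\lambda)=\lambda+\lambda^2$ (since $\Sq^1(\lambda)=\omega\lambda$ but $\omega=0$ on Chow groups, or more simply because odd-degree operations act by zero on $\CH^*/\ell$ after halving degrees and $\Sq^2\lambda=\lambda^2$), the Euler sequence gives $w(N_h)=1/(1+\lambda)^{n+1}$ via Lemmas of Brosnan's analogous to \ref{mult} and \ref{wet}, and then Lemma \ref{powerseries} shows that the coefficient of $\lambda^n$ in $\Sq(x)\cdot w(N_h)$ equals $\Sq(y)$ when $m=n$ and vanishes otherwise, matching $\Sq(h_*x)$ computed by the projection formula. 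The odd $\ell$ case is identical with $P(\lambda)=\lambda+\lambda^\ell$ and $w(N_h)=1/(1+\lambda)^{n+1}$. The main obstacle is not any single step but bookkeeping: one must be careful that the ``Steenrod operations'' on $\CH^*/\ell$ are the \emph{total} operations with the degree-doubling convention (so $\Sq^{2i}$ on $\CH^*$ corresponds to $\Sq^i$ on cohomology), and that Brosnan's $w$ in \eqref{wBro} — which omits the $\omega$-twist present in $w^{\et}$ of \eqref{formulawet2} — is genuinely the correct class here; fortunately this is precisely the content of \cite[\S 11]{Brosnan}, so the honest work has already been done and the proof is mostly a matter of citing it and recording the $\P^n$-bundle computation.
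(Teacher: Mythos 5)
Your overall strategy is sound, but note that it is a genuinely different route from the paper's: the paper proves Proposition \ref{WuChow} by a one-line citation of \cite[Propositions 9.4 (iii) and 10.3]{Brosnan}, where the relative Wu formula for Brosnan's operations is already established, whereas you transplant the proof of Theorem \ref{thWu} to Chow groups (d\'evissage through $\P^n_X$, Brosnan cited only for the closed-immersion case, explicit computation for the projection). The reduction step only uses facts Brosnan provides (Cartan formula, projection formula, additivity of $T_f$, multiplicativity of $w$), and your $\ell=2$ computation for $h:\P^n_X\to X$ is correct, with the pleasant feature that no class $\omega$ intervenes. One caveat: the factorization $f=h\circ g$ through $\P^n_X$ requires $f$ to be projective, which holds when $Y$ is quasi-projective over $k$ but not for an arbitrary proper morphism of smooth varieties as in the statement; this is harmless for the application in Theorem \ref{compatibility}, but it is one reason the paper defers entirely to Brosnan, whose results cover the proper case directly.

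There is, however, a concrete error in your odd-$\ell$ projection case. By the definition (\ref{wBro}), $w(\cO_{\P^n_X}(1))=1+\lambda^{\ell-1}$, so the Euler sequence gives $w(N_h)=(1+\lambda^{\ell-1})^{-(n+1)}$, not $(1+\lambda)^{-(n+1)}$; the two agree only for $\ell=2$. Moreover Lemma \ref{powerseries} is a parity statement and gives nothing modulo an odd prime, so it cannot be ``applied with $T=\lambda$'' even after correcting the class. What the computation actually needs is this: in $P(x)\cdot w(N_h)=h^*P(y)\cdot\lambda^m(1+\lambda^{\ell-1})^{m-n-1}$, the coefficient of $\lambda^{n}$ must vanish mod $\ell$ when $m<n$. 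Writing $k=n-m$, only terms with $(\ell-1)\mid k$ occur, and for $j=k/(\ell-1)\geq 1$ the relevant coefficient is $\binom{-k-1}{j}=(-1)^j\binom{\ell j}{j}$, which is divisible by $\ell$ by Lucas's theorem (the base-$\ell$ digits of $\ell j$ are those of $j$ shifted up, so the lowest nonzero digit of $j$ sits under a zero digit of $\ell j$). With this substitute for Lemma \ref{powerseries}, your odd-$\ell$ case goes through. Be aware that the final sentence of the paper's proof of Theorem \ref{thWu} contains the same two slips (the class $1/(1+\lambda)^{n+1}$ and the appeal to Lemma \ref{powerseries} for odd $\ell$), so it should not be copied verbatim; since the paper proves Proposition \ref{WuChow} by citation, your write-up is where the correction has to be made.
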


\begin{proof}
This follows from \cite[Propositions 9.4 (iii) and 10.3]{Brosnan}.
\end{proof}

\begin{rem}
\label{RemVoe}
An earlier construction of Steenrod operations on Chow groups was given by Voevodsky in the context of motivic cohomology \cite{Vo2}. We do not know a published proof of the fact that his operations are compatible with Brosnan's.
\end{rem}

\subsection{Comparing Steenrod operations}

Combining Proposition \ref{WuChow} and Theorem \ref{thWu}, we may compare the Steenrod operations on Chow groups and in \'etale cohomology. To do so, we let $\cl:\CH^*(X)/\ell\to H^{2*}_{\et}(X,\bmu_{\ell}^{\otimes *})$ denote the mod $\ell$ \'etale cycle class map  of a smooth variety $X$ over $k$. 

\begin{lem}
\label{wetChow}
Let $X$ be a smooth variety over $k$ and let $\kappa\in K_0(X)$ be a $\KK$-theory class of rank $c\in\Z$. Then
\begin{equation*}
\begin{alignedat}{5}
w^{\et}(\kappa)&=&\,&\sum_{j\geq 0}(1+\varpi)^{c-j}\cdot\cl(w_{2j}(\kappa))&\,&\textrm{ \hspace{.5em}if }\ell=2,\\
w^{\et}(\kappa)&=&\,&\cl(w(\kappa))&\,&\textrm{ \hspace{.5em}if }\ell\textrm{ is odd.}\
\end{alignedat}
\end{equation*}
\end{lem}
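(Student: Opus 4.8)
The plan is to reduce to the case of a line bundle by the splitting principle, exactly as in the proof of Lemma \ref{wet}, and then to match the two sides term by term using the explicit formulas from Lemma \ref{wet} and the definition \eqref{wBro} of Brosnan's classes. The key observation is that both $w^{\et}$ and the right-hand sides are multiplicative in short exact sequences: for $w^{\et}$ this is Lemma \ref{mult}; for the right-hand side when $\ell$ is odd it is immediate from multiplicativity of $\cl$ and of $w$; when $\ell=2$ one must check that the assignment $\kappa\mapsto\sum_j(1+\omega)^{c-j}\cl(w_{2j}(\kappa))$ is multiplicative, which amounts to the identity $\sum_{j}(1+\omega)^{c-j}\cl(w_{2j}(\kappa))=(1+\omega)^c\cdot\sum_j (1+\omega)^{-j}\cl(w_{2j}(\kappa))$ together with the fact that $\sum_j(1+\omega)^{-j}\cl(w_{2j}(-))$ is a multiplicative characteristic class (ranks adding, and $(1+\omega)^{-1}$ being a well-defined element since $\omega$ is nilpotent on any Noetherian scheme, being of positive degree in mod-$2$ \'etale cohomology). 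Since both sides are also clearly compatible with the operation $\kappa\mapsto-\kappa$ once multiplicativity is known, it suffices to treat an honest line bundle $L$, where $c=1$.

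For a line bundle $L$ with $c^{\et}_1(L)=\lambda^{\et}$ and $c_1(L)=\lambda\in\CH^1(X)/\ell$ with $\cl(\lambda)=\lambda^{\et}$, the right-hand side when $\ell$ is odd reads $\cl(1+\lambda^{\ell-1})=1+\cl(\lambda)^{\ell-1}=1+(\lambda^{\et})^{\ell-1}$, which is exactly $w^{\et}(L)$ by \eqref{formulawetodd}; here I use that $\cl$ is a ring homomorphism. When $\ell=2$ the right-hand side is $\sum_{j\geq 0}(1+\omega)^{1-j}\cl(w_{2j}(L))=(1+\omega)\cl(w_0(L))+\cl(w_2(L))\cdot(1+\omega)^{-1+1}$; since $w_0(L)=1$ and $w_2(L)=\lambda$ by \eqref{wBro}, and $(1+\omega)^0=1$, this equals $(1+\omega)+\cl(\lambda)=1+\omega+\lambda^{\et}$, which is precisely $w^{\et}(L)$ by \eqref{formulawet2}. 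This settles the line bundle case.

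The remaining point is to make the reduction rigorous, i.e. to verify that passing from line bundles to successive extensions of line bundles (via the splitting principle of \cite[\S 1]{Riou} together with its Chow-theoretic analogue) is legitimate for both sides. This is where one must be slightly careful with the $\ell=2$ formula: writing $W(\kappa):=(1+\omega)^{-\mathrm{rk}(\kappa)}\sum_{j\geq 0}(1+\omega)^{-j}\cl(w_{2j}(\kappa))\cdot(1+\omega)^{\mathrm{rk}(\kappa)}$ — more cleanly, setting $\widetilde{w}(\kappa):=\sum_{j\ge0}(1+\omega)^{-j}\cl(w_{2j}(\kappa))$, the claim is that $\widetilde{w}$ is multiplicative and that the asserted right-hand side equals $(1+\omega)^{c}\widetilde{w}(\kappa)$. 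Multiplicativity of $\widetilde{w}$ follows from that of $w$ (from \cite[p.\,1891]{Brosnan}) and of $\cl$, observing that $(1+\omega)^{-1}$ is central and that the grading is respected because $w_{2j}\in\CH^j/2$ contributes the factor $(1+\omega)^{-j}$ compatibly with addition of the grading index under cup product. I expect this bookkeeping with the powers of $(1+\omega)$ — in particular checking that the rank-dependent prefactor behaves correctly under tensor/direct sum, which is what forces the exponent $c-j$ rather than $-j$ — to be the only genuinely delicate part; everything else is a direct comparison of the formulas already established in Lemma \ref{wet} with the definition \eqref{wBro}, using that $\cl$ is a ring homomorphism sending $c_j(E)$ to $c^{\et}_j(E)$ and hence Chern roots to Chern roots.
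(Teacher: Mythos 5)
Your proposal is correct and takes essentially the same route as the paper: multiplicativity of both sides in $\kappa$ (Lemma \ref{mult} for $w^{\et}$, the definition (\ref{wBro}) and the ring-homomorphism property of $\cl$ for the right-hand side, with the exponent $c-j$ splitting as $(c_1-j_1)+(c_2-j_2)$), then the splitting principle, then the line bundle case read off from Lemma \ref{wet}. One correction to a side remark: $\omega$ is \emph{not} nilpotent in general --- over $k=\R$ it generates a polynomial subalgebra of $H^*_{\et}(\Spec(\R),\Z/2)$, as Remark \ref{remSteencl} illustrates --- but $1+\omega$ is nonetheless invertible in the full product $\prod_q H^q_{\et}(X,\Z/2)$, where total characteristic classes (and the inverse $w^{\et}(F)^{-1}$ entering the definition of $w^{\et}$ on $K_0(X)$) are taken anyway, so your bookkeeping with $(1+\omega)^{-1}$ goes through unchanged.
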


\begin{proof}
It follows from Lemma \ref{mult} and the definition (\ref{wBro}) that both sides of these equalities are multiplicative in the class $\kappa$.  By the splitting principle, we may thus assume that $\kappa$ is the class of a line bundle $L$. It then follows from Lemma \ref{wet} and~(\ref{wBro}) that, if $\ell$ is odd, one has
$$w^{\et}(\kappa)=1+c_1^{\et}(L)^{\ell-1}=\cl(1+c_1(L)^{\ell-1})=\cl(w(\kappa)),$$
and that, if $\ell=2$,  one has
$$w^{\et}(\kappa)=1+\varpi+c_1^{\et}(L)=1+\varpi+\cl(c_1(L))=(1+\varpi)\cdot\cl(w_0(\kappa))+\cl(w_2(\kappa)).\eqno\qedhere$$
\end{proof}

\begin{thm}
\label{compatibility}
Let $X$ be a smooth quasi-projective variety over $k$. For $c\geq 0$ and $x\in \CH^c(X)/\ell$, one has
\begin{alignat*}{5}
\Sq(\cl(x))&=&\,&\sum_{i\geq 0}(1+\varpi)^{c-i}\cdot\cl(\Sq^{2i}(x))&\,&\textrm{ \hspace{.5em}if }\ell=2,\\
\nonumber P(\cl(x))&=&\,&\cl(P(x))&\,&\textrm{ \hspace{.5em}if }\ell\textrm{ is odd.}\
\end{alignat*}
\end{thm}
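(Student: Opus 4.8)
The plan is to reduce the theorem to a comparison of the two relative Wu theorems available to us: Brosnan's Proposition~\ref{WuChow} for Chow groups and Theorem~\ref{thWu} in \'etale cohomology. First I would recall that, since $X$ is quasi-projective and $x\in\CH^c(X)/\ell$, we may write $x$ as a combination of classes $[Z]$ of closed integral subvarieties $Z\subset X$ of codimension $c$; by linearity it suffices to treat $x=[Z]$. Choose a resolution of singularities $\sigma:\widetilde{Z}\to Z$ when the characteristic allows, or more robustly use an alteration or Chow's lemma together with the fact that the cycle class of $Z$ is $\iota_*$ of the fundamental class of a regular scheme mapping properly and generically finitely of degree prime to $\ell$ onto $Z$ --- the cleanest route is: there is a proper morphism $f:Y\to X$ from a smooth quasi-projective $k$-variety $Y$ and a class $y\in\CH^0(Y)/\ell$ (or more generally of low codimension) with $f_*(y)=x$ in $\CH^c(X)/\ell$. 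In fact one can arrange $Y$ smooth projective over a smooth locus and take $y=1$, so that $x=f_*(1)$ with $f$ of virtual relative dimension $-c$. The key point is that the cycle class map is compatible with proper pushforward: $\cl(f_*(1))=f_*(\cl(1))=f_*(1)$, where on the right $f_*$ is the Gysin map in twisted \'etale cohomology from \S\ref{parWu}.

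Next I would compute both sides. On one hand, $P(\cl(x))=P(f_*(1))$, and by Theorem~\ref{thWu} this equals $f_*(P(1)\cdot w^{\et}(N_f))=f_*(w^{\et}(N_f))$, using $P(1)=1$. By Lemma~\ref{wetChow} (odd case), $w^{\et}(N_f)=\cl(w(N_f))$, so compatibility of $\cl$ with $f_*$ gives $P(\cl(x))=f_*(\cl(w(N_f)))=\cl(f_*(w(N_f)))$. On the other hand, Brosnan's Proposition~\ref{WuChow} gives $P(x)=P(f_*(1))=f_*(P(1)\cdot w(N_f))=f_*(w(N_f))$ in $\CH^*(X)/\ell$. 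Applying $\cl$ yields $\cl(P(x))=\cl(f_*(w(N_f)))=P(\cl(x))$, which is the desired identity for $\ell$ odd. When $\ell=2$ the same argument runs, but one must carry the twist-correction factors: Theorem~\ref{thWu} gives $\Sq(\cl(x))=f_*(w^{\et}(N_f))$, and Lemma~\ref{wetChow} (case $\ell=2$, with $N_f$ of rank $c$) expresses $w^{\et}(N_f)=\sum_{j\geq0}(1+\omega)^{c-j}\cdot\cl(w_{2j}(N_f))$. Since $\omega$ is pulled back from $\Spec(k)$, it commutes with $f_*$ via the projection formula, so $f_*(w^{\et}(N_f))=\sum_{j\geq0}(1+\omega)^{c-j}\cdot\cl\big(f_*(w_{2j}(N_f))\big)$; here I use that $f_*$ on $\CH$ and on \'etale cohomology agree under $\cl$ and that $f_*$ shifts codimension by $c$ so the exponent $c-j$ is the right one. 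Comparing with Brosnan's formula $\Sq(x)=\sum_{2i\,=\,\deg} \Sq^{2i}(x)=f_*(w(N_f))=\sum_j f_*(w_{2j}(N_f))$, matched degree by degree, identifies $f_*(w_{2j}(N_f))$ with $\Sq^{2(c-?)}(x)$ after the appropriate bookkeeping; tracking codimensions shows the contribution in codimension $c+i$ is $\cl(\Sq^{2i}(x))$ weighted by $(1+\omega)^{c-i}$, which is exactly the claimed formula.

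I expect the main obstacle to be the reduction step: producing, for an arbitrary cycle class $x$, a proper morphism $f:Y\to X$ with $Y$ \emph{smooth} and $x=f_*(1)$ (or $f_*$ of a class that is itself visibly Steenrod-compatible, e.g.\ in $\CH^0$). Over a field of characteristic zero this is resolution of singularities applied to the components of $Z$; over imperfect or positive-characteristic fields one cannot invoke Hironaka, so I would instead use that $\CH^*(X)/\ell$ is generated as $X$ ranges by pushforwards from smooth projective varieties via de Jong alterations of degree prime to $\ell$ (a prime-to-$\ell$ alteration $Y\to Z$ induces a split surjection on $\CH/\ell$, so $1\in\CH^0(Y)/\ell$ pushes to a nonzero multiple of $[Z]$ that can be rescaled), or, following Brosnan's own setup, reduce to the case where $x$ is pulled back from a finite-dimensional approximation of a classifying space where everything is a pushforward of a Chern-class monomial. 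A secondary technical point is checking that the Gysin maps of \S\ref{parWu} and the ones implicit in Brosnan's Chow-theoretic formalism are genuinely intertwined by $\cl$ --- this is standard compatibility of cycle classes with proper pushforward, but one should cite it carefully (e.g.\ via \cite{Riou}), since both the Chow Steenrod operations and the twisted \'etale ones are built through deformation-to-the-normal-cone constructions that must be seen to match. Once these compatibilities are in hand, the computation above is purely formal.
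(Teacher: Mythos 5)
Your proposal is correct and follows essentially the same route as the paper: reduce to $x=f_*[Y]$ via a prime-to-$\ell$ alteration (rescaling by the invertible degree), apply Theorem~\ref{thWu} and Proposition~\ref{WuChow} to compute both sides as pushforwards of $w^{\et}(N_f)$ and $\cl(w(N_f))$, and conclude with Lemma~\ref{wetChow} together with the projection formula for the $(1+\omega)$-factors when $\ell=2$. The only detail the paper adds that you omit is a preliminary reduction to a perfect base field (using invariance of \'etale cohomology under purely inseparable extensions) so that Gabber's refinement of de Jong's alteration theorem applies.
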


\begin{proof}
By invariance of \'etale cohomology under inseparable extensions of the base field \cite[VIII, Corollaire 1.2]{SGA42}, we may assume that $k$ is perfect (after replacing it with its perfect closure). We may assume that $x$ is the class of integral subvariety $g:Z\hookrightarrow X$. As $k$ is perfect, it follows from Gabber's improvement on de Jong's alteration theorem \cite[Theorem 2.1]{IlluTem} that there exists a smooth variety $Y$ over~$k$ and a projective morphism $\nu:Y\to Z$ that is generically finite of degree $d$ prime to~$\ell$.  Set $f:=g\circ\nu$.  Replacing $x$ by $d\;\!x$,  which is legitimate because $d\in(\Z/\ell)^*$, we may assume that $x=f_*[Y]$.  If $\ell$ is odd,  we then compute 
\begin{equation*}
\begin{alignedat}{5}
P(\cl(x))&=P(\cl(f_*[Y]))=P(f_*1)=f_*w^{\et}(N_f)\hspace{1em}\textrm{and}\\
\cl(P(x))&=\cl(P(f_*[Y]))=\cl(f_*(P([\wY])\cdot w(N_f)))=\cl(f_*w(N_f))=f_*\cl(w(N_f)),
\end{alignedat}
\end{equation*}
where the third equality is Theorem \ref{thWu} and the fifth is Proposition \ref{WuChow}.
If $\ell=2$, the same arguments show that
$\Sq(\cl(x))=f_*w^{\et}(N_f)$ and $\cl(\Sq(x))=f_*\cl(w(N_f))$.
In both cases, the theorem now follows from Lemma \ref{wetChow}.
\end{proof}

One can complement Theorem \ref{compatibility} with a formula describing the action of the Bockstein on a cycle class. 

\begin{lem} 
\label{compaBock}
For  a smooth quasi-projective variety $X$ over $k$ and $x\in \CH^c(X)/\ell$,
\begin{equation*}
\beta(\cl(x))=-c\,\varpi\cdot\cl(x).
\end{equation*}
\end{lem}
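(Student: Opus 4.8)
The plan is to recall that the Bockstein $\beta$ on $H^*_{\et}(X,\bmu_\ell^{\otimes r})$ was computed in Lemma \ref{calculBock} to be $\beta = \delta_r - r\,\omega\cdot(-)$, where $\delta_r$ is the integral Bockstein, i.e. the boundary map of the short exact sequence $0\to\bmu_\ell^{\otimes r}\to\bmu_{\ell^2}^{\otimes r}\to\bmu_\ell^{\otimes r}\to 0$. Since $\cl(x) \in H^{2c}_{\et}(X,\bmu_\ell^{\otimes c})$, we have $r = c$, so it suffices to show that the integral Bockstein $\delta_c$ kills $\cl(x)$. This reduces the lemma to the statement that the cycle class $\cl(x)$ lifts to $H^{2c}_{\et}(X,\bmu_{\ell^2}^{\otimes c})$, i.e. that the reduction-mod-$\ell$ of an integral (more precisely, $\Z/\ell^2$-coefficient) cycle class map applied to $x$ recovers $\cl(x)$.

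First I would invoke the compatibility of cycle class maps with change of coefficients: there is an $\ell^2$-torsion cycle class map $\cl_{\ell^2}:\CH^c(X)/\ell^2 \to H^{2c}_{\et}(X,\bmu_{\ell^2}^{\otimes c})$ (constructed in the same way as $\cl$, e.g. via \cite[Th\'eor\`eme 1.3 and \S 2]{Riou} or the cited reference for the cycle class map), and the diagram relating $\cl_{\ell^2}$ to $\cl$ via the reduction $\CH^c(X)/\ell^2 \to \CH^c(X)/\ell$ on the source and the map $\bmu_{\ell^2}^{\otimes c}\to\bmu_\ell^{\otimes c}$ on the target commutes. Lifting $x\in\CH^c(X)/\ell$ arbitrarily to some $\tilde x\in\CH^c(X)/\ell^2$, we get that $\cl(x)$ is the image of $\cl_{\ell^2}(\tilde x)$ under $H^{2c}_{\et}(X,\bmu_{\ell^2}^{\otimes c})\to H^{2c}_{\et}(X,\bmu_\ell^{\otimes c})$, hence lies in the image of that reduction map. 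Since $\delta_c$ is precisely the boundary operator of the sequence $0\to\bmu_\ell^{\otimes c}\to\bmu_{\ell^2}^{\otimes c}\to\bmu_\ell^{\otimes c}\to 0$ and therefore vanishes on the image of $H^{2c}_{\et}(X,\bmu_{\ell^2}^{\otimes c})\to H^{2c}_{\et}(X,\bmu_\ell^{\otimes c})$, we conclude $\delta_c(\cl(x)) = 0$. Plugging into Lemma \ref{calculBock} gives $\beta(\cl(x)) = \delta_c(\cl(x)) - c\,\omega\cdot\cl(x) = -c\,\omega\cdot\cl(x)$, as desired.

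The main obstacle, such as it is, is purely bookkeeping: making sure the $\Z/\ell^2$-coefficient cycle class map is available with the required compatibility with $\cl$ under coefficient reduction, and that the identification of $\delta_c$ with the boundary of the sequence \eqref{topoBockstein} (which is how it is defined in \S\ref{parBockstein}) is the same one used in Lemma \ref{calculBock}. Both are standard; the former follows from the construction of \'etale Chern classes and cycle classes being compatible under $\bmu_{\ell^2}^{\otimes\bullet}\to\bmu_\ell^{\otimes\bullet}$, and the latter is immediate from the definition of $\delta_r$ in \S\ref{parBockstein}. An alternative, if one prefers to avoid even mentioning $\cl_{\ell^2}$ explicitly, is to observe that it is enough to check the lifting after pulling back along an alteration $f\colon Y\to Z\hookrightarrow X$ as in the proof of Theorem \ref{compatibility}, reducing to the case $x = f_*[Y]$, and then note that $\cl(f_*[Y]) = f_* 1$ with $1 = \cl_{\ell^2}(1)$ visibly lifting; I would use whichever formulation is cleaner given the paper's conventions.
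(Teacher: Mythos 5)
Your proposal is correct and is exactly the paper's argument: the paper proves Lemma \ref{compaBock} in one line by noting that $\cl(x)$ lifts to $H^{2c}_{\et}(X,\bmu_{\ell^2}^{\otimes c})$ (via the mod $\ell^2$ cycle class map), so $\delta_c(\cl(x))=0$ and Lemma \ref{calculBock} gives the formula. Your version merely spells out the coefficient-reduction compatibility that the paper leaves implicit.
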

\begin{proof}
This follows from Lemma \ref{calculBock} because $\cl(x)$ lifts to $H^{2c}_{\et}(X,\bmu_{\ell^2}^{\otimes c})$.
\end{proof}

\begin{rem}
\label{RemVoe2}
We do not think that Theorem \ref{compatibility} was previously known, even in the framework of Voevodsky's motivic Steenrod operations alluded to in Remark \ref{RemVoe}. We note for instance that the class $\varpi$ does not appear in the statement of \cite[Theorem~1.1~(iii)]{BJ}. We also note that the claim made in \cite[Remark~7.3.1]{GW} implies that the Steenrod operations in \'etale cohomology considered in \cite{GW} preserve algebraic classes, which is not true in general (see Remark \ref{remSteencl} below).
\end{rem}

\subsection{Obstructions to algebraicity}
\label{parobstr}

Let $H^{2*}_{\et}(X,\bmu_{\ell}^{\otimes *})_{\alg}\subset H^{2*}_{\et}(X,\bmu_{\ell}^{\otimes *})$ be the image of the  cycle class map. We obtain stability results of these algebraic classes under Steenrod operations.

\begin{prop}
\label{stability}
Let $X$ be a smooth quasi-projective variety over $k$. 
\begin{enumerate}[(i)]
\item If $k$ contains a primitive $\ell^2$-th root of unity,  then $H^{2*}_{\et}(X,\bmu_{\ell}^{\otimes *})_{\alg}$ is  $\cA_{\ell}$-stable.
\item If $\ell$ is odd, then $H^{2*}_{\et}(X,\bmu_{\ell}^{\otimes *})_{\alg}$ is stable under the action of $P$.
\end{enumerate}
\end{prop}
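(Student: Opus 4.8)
The plan is to deduce both statements directly from the comparison formula in Theorem~\ref{compatibility}, using that the right-hand sides are manifestly algebraic. Let me spell out the two cases.

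\textbf{Proof of (ii).} Assume $\ell$ is odd. It suffices to show that $P^i(\cl(x))$ is algebraic for every $i\geq 0$ and every $x\in\CH^c(X)/\ell$, since the $P^i$ generate the subalgebra of $\cA_\ell$ generated by the Steenrod powers, and $H^{2*}_{\et}(X,\bmu_\ell^{\otimes *})_{\alg}$ is closed under sums. By the second formula in Theorem~\ref{compatibility}, $P(\cl(x))=\cl(P(x))$; extracting the degree-$2i(\ell-1)$ part (and using Remark~\ref{rems}~(ii) to identify the relevant Tate twists) gives $P^i(\cl(x))=\cl(P^i(x))$, where $P^i(x)\in\CH^{c+i(\ell-1)}(X)/\ell$ by the grading properties of Brosnan's operations recalled in \S\ref{parSteenrodChow}. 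Hence $P^i(\cl(x))$ lies in the image of $\cl$, i.e.\ is algebraic. This proves that $H^{2*}_{\et}(X,\bmu_\ell^{\otimes *})_{\alg}$ is stable under $P$. (Note that it is \emph{not} claimed to be stable under the Bockstein $\beta$ unless $k$ contains a primitive $\ell^2$-th root of unity, consistently with Lemma~\ref{compaBock}.)

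\textbf{Proof of (i).} Suppose $k$ contains a primitive $\ell^2$-th root of unity $\zeta$. Then in particular $\zeta^\ell$ is a primitive $\ell$-th root of unity, so by Remark~\ref{rems}~(iv) the choice of $\zeta^\ell$ furnishes $\cA_\ell$-equivariant isomorphisms $H^*_{\et}(X,\bmu_\ell^{\otimes r})\isoto H^*_{\et}(X,\bmu_\ell^{\otimes s})$ for all $r,s$, compatibly with the cycle class map; thus we may freely ignore Tate twists. Moreover, $\omega\in H^1_{\et}(X,\Z/\ell)$ is the image of $-1\in k^*/(k^*)^\ell$ (when $\ell=2$; in general $\omega$ is the pullback of the class from $S_\ell$ coming from the extension~(\ref{Bockell})); the hypothesis that a primitive $\ell^2$-th root of unity exists in $k$ means precisely that the extension~(\ref{Bockell}) splits after pullback to $\Spec(k)$, so $\omega=0$ in $H^1_{\et}(X,\Z/\ell)$. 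When $\ell$ is odd this is immediate from part~(ii) together with the vanishing of $\beta$ on cycle classes: by Lemma~\ref{compaBock}, $\beta(\cl(x))=-c\,\omega\cdot\cl(x)=0$, so $H^{2*}_{\et}(X,\bmu_\ell^{\otimes *})_{\alg}$ is stable under both $P$ and $\beta$, hence under all of $\cA_\ell$. When $\ell=2$, the first formula in Theorem~\ref{compatibility} becomes $\Sq(\cl(x))=\sum_{i\geq 0}\cl(\Sq^{2i}(x))$ since $\omega=0$; extracting the degree-$j$ part gives $\Sq^j(\cl(x))=\cl(\Sq^j(x))$ for $j$ even and $\Sq^j(\cl(x))=0$ for $j$ odd (the latter also following from the vanishing of odd Steenrod operations on Brosnan's side, Lemma~\ref{compaBock}, or directly). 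In either parity, every generator of $\cA_\ell$ sends $\cl(x)$ to an algebraic class, so $H^{2*}_{\et}(X,\bmu_\ell^{\otimes *})_{\alg}$ is $\cA_\ell$-stable.

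\textbf{Main obstacle.} There is no deep obstacle here: the content is entirely in Theorem~\ref{compatibility}, and both parts are essentially bookkeeping with gradings and Tate twists. The one point requiring care is the identification, in part~(i), of the hypothesis ``$k$ contains a primitive $\ell^2$-th root of unity'' with the vanishing of $\omega$ in $H^1_{\et}(X,\Z/\ell)$ — this uses that $\omega$ is pulled back from $S_\ell$ and is the obstruction class of the extension~(\ref{Bockell}), which splits exactly when a primitive $\ell^2$-th root of unity is available. For $\ell=2$ this is the familiar statement that $-1$ is a square; for odd $\ell$ one invokes the description of $\omega$ from \S\ref{parBockstein} together with the Kummer sequence for $\bmu_{\ell^2}$. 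Once $\omega=0$ is established, everything reduces to extracting graded pieces of the identities in Theorem~\ref{compatibility}.
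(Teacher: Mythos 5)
Your proposal is correct and takes essentially the same route as the paper: observe that $\omega$ vanishes over $k$ when a primitive $\ell^2$-th root of unity is present, then read off stability from Theorem~\ref{compatibility} (graded pieces, with the twist bookkeeping of Remark~\ref{rems}) together with Lemma~\ref{compaBock} for the Bockstein. One small imprecision: the extension (\ref{Bockell}) does not literally split over such a $k$ (it becomes $0\to\Z/\ell\to\Z/\ell^2\to\Z/\ell\to 0$); what you need, and what holds, is merely that $1\in H^0_{\et}(\Spec(k),\Z/\ell)$ lifts to $H^0_{\et}(\Spec(k),\bmu_{\ell^2})$ because $\mu_{\ell^2}(k)\to\mu_{\ell}(k)$ is surjective, so its boundary $\omega$ vanishes.
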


\begin{proof}
By construction, $\varpi\in H^1_{\et}(\Spec(k),\Z/\ell)$ vanishes if $k$ contains a primitive $\ell^2$\nobreakdash-th root of unity. Theorem \ref{compatibility} and Lemma~\ref{compaBock} thus imply the proposition.
\end{proof}

\begin{rem}
\label{remSteencl}
When $\ell=2$, the hypothesis that $-1$ is a square in Proposition \ref{stability}~(i) cannot be removed, even if one restricts to even degree Steenrod squares. 
For instance, if $k=\R$ and $X=\P^2_{\R}$, then 
$$H^*_{\et}(X,\Z/2)=H^*_{\et}(\Spec(\R),\Z/2)[\lambda]/\lambda^3=\Z/2[\varpi,\lambda]/\lambda^3,$$ where $\lambda:=c^{\et}_1(\cO_X(1))$. The class $\lambda$ is algebraic, hence so is $\lambda^2$.
Using (\ref{identiSq}) and~(\ref{Cartan}) shows that  $\Sq^2(\lambda^2)=\Sq^1(\lambda)^2$. Since $\lambda$ lifts to $H^2_{\et}(X,\bmu_4)$, Lemma \ref{calculBock} implies that $\Sq^1(\lambda)=\varpi\cdot\lambda$. It follows that 
$\Sq^2(\lambda^2)=\varpi^2\cdot\lambda^2$ is not algebraic.
This shows that Steenrod operations do not preserve algebraic classes in general. 

In this example, the  algebraic class $\lambda^2$ is not sent to an algebraic class by $\Sq^2$, but it is sent to an algebraic class by $\Sq^2+\varpi^2$. This is a general phenomenon, as we show in Proposition \ref{weirdobstr} (i) below.
\end{rem}

Traditionally, instances of Proposition \ref{stability} have been used to prove that some cohomology classes are not algebraic, by showing that they are not killed by an odd degree Steenrod operations.

\begin{cor}
\label{basicobstr}
Let $X$ be a smooth quasi-projective variety over $k$. 
Assume that~$k$ contains a primitive $\ell^2$-th root of unity. Then $H^{2c}_{\et}(X,\bmu_{\ell}^{\otimes c})_{\alg}$ is killed by all odd degree Steenrod operations.
\end{cor}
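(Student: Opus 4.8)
The plan is to derive Corollary \ref{basicobstr} directly from Proposition \ref{stability}, which already establishes that $H^{2*}_{\et}(X,\bmu_{\ell}^{\otimes *})_{\alg}$ is $\cA_{\ell}$-stable once $k$ contains a primitive $\ell^2$-th root of unity. The only remaining point is to explain why $\cA_{\ell}$-stability forces \emph{odd degree} operations to kill algebraic classes, and this is a purely algebraic fact about the grading on Chow groups.

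First I would recall that by Brosnan's construction (recalled in \S\ref{parSteenrodChow}), every odd degree element of $\cA_{\ell}$ acts by zero on $\CH^*(X)/\ell$. So if $\alpha\in\cA_{\ell}$ has odd degree and $x\in\CH^c(X)/\ell$, then $\alpha(x)=0$ in $\CH^*(X)/\ell$, hence $\cl(\alpha(x))=0$. The content of Proposition \ref{stability} (i) is precisely that the Steenrod operations in \'etale cohomology commute with $\cl$ up to the correction terms involving $\omega$; but when $k$ contains a primitive $\ell^2$-th root of unity, $\omega$ vanishes, so Theorem \ref{compatibility} and Lemma \ref{compaBock} give clean equivariance: $\alpha(\cl(x))=\cl(\alpha(x))$ for every $\alpha\in\cA_{\ell}$. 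Combining these, $\alpha(\cl(x))=\cl(\alpha(x))=\cl(0)=0$.

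Slightly more carefully, I would spell out the reduction to generators. Since $\cA_{\ell}$ is generated in its odd part by $\Sq^i$ with $i$ odd (when $\ell=2$) or by the $P^i$ together with $\beta$ (when $\ell$ is odd), and since the even-degree Steenrod operations preserve $H^{2*}_{\et}(X,\bmu_{\ell}^{\otimes *})_{\alg}$ by Proposition \ref{stability}, it suffices to check that each \emph{odd degree generator} kills the image of $\cl$; an arbitrary odd degree element of $\cA_{\ell}$ is a sum of products ending in such a generator applied first, and the intermediate results stay in the algebraic subgroup. For $\ell$ odd this is immediate from Lemma \ref{compaBock} (for $\beta$, noting $\omega=0$) and from the second line of Theorem \ref{compatibility} together with the vanishing of odd operations on Chow groups (for the $P^i$ with $i$ such that $2i(\ell-1)$ is odd---which never happens, so in fact one only needs the Bockstein here, plus the observation that products $P^i\beta$, $\beta P^i$ land in the algebraic subgroup). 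For $\ell=2$, the first line of Theorem \ref{compatibility} with $\omega=0$ reads $\Sq^j(\cl(x))=\cl(\Sq^j(x))$, which is $0$ for $j$ odd since $\Sq^j$ has odd degree.

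I do not expect any genuine obstacle: all the work is contained in Theorem \ref{compatibility}, Proposition \ref{stability}, and Brosnan's vanishing of odd operations on Chow groups. The only mild care needed is the bookkeeping of generators versus arbitrary odd-degree elements, and making sure the $\omega$-correction terms in Theorem \ref{compatibility} really do vanish under the hypothesis on $k$---which is exactly the observation already made in the proof of Proposition \ref{stability}.

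\begin{proof}
Since $k$ contains a primitive $\ell^2$-th root of unity, the class $\omega\in H^1_{\et}(\Spec(k),\Z/\ell)$ vanishes. Hence Theorem \ref{compatibility} and Lemma \ref{compaBock} show that the cycle class map $\cl:\CH^*(X)/\ell\to H^{2*}_{\et}(X,\bmu_{\ell}^{\otimes *})$ is $\cA_{\ell}$-equivariant, meaning $\alpha(\cl(x))=\cl(\alpha(x))$ for all $\alpha\in\cA_{\ell}$ and all $x\in\CH^*(X)/\ell$. (For the Steenrod squares or $\ell$-th powers this is Theorem \ref{compatibility} with $\omega=0$; for the Bockstein this is Lemma \ref{compaBock} with $\omega=0$.) By \cite[Definition~8.11 and \S 11]{Brosnan}, every odd degree element of $\cA_{\ell}$ acts by zero on $\CH^*(X)/\ell$. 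Therefore, if $\alpha\in\cA_{\ell}$ has odd degree and $x\in\CH^c(X)/\ell$, then
\begin{equation*}
\alpha(\cl(x))=\cl(\alpha(x))=\cl(0)=0.
\end{equation*}
As every element of $H^{2c}_{\et}(X,\bmu_{\ell}^{\otimes c})_{\alg}$ is of the form $\cl(x)$, this proves the corollary.
\end{proof}
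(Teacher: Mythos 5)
Your proof is correct and takes essentially the paper's route: the corollary is stated there as an immediate consequence of Proposition \ref{stability}, whose own proof is exactly the combination you use (Theorem \ref{compatibility} and Lemma \ref{compaBock} with $\omega=0$, plus Brosnan's fact that odd degree elements of $\cA_{\ell}$ act by zero on $\CH^*(X)/\ell$). Note only that the decomposition claim in your discussion paragraph (writing odd degree elements as sums of products whose first-applied factor is an odd generator) is imprecise, but your final argument via full $\cA_{\ell}$-equivariance of $\cl$ does not rely on it and is sound.
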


Even over algebraically closed fields, Proposition \ref{stability} yields obstructions to the algebraicity of cohomology classes that go beyond the vanishing of odd degree Steenrod operations.
Over the complex numbers,  that algebraic mod $\ell$ cohomology classes are reductions mod $\ell$ of integral Hodge classes implies the following.

\begin{cor}
\label{Hodgeobstr}
Let $X$ be a smooth projective variety over $\C$. Fix $\alpha\in\cA_{\ell}$ and $x\in H^{2c}(X(\C),\Z/\ell)_{\alg}$.
\begin{enumerate}[(i)]
\item If $\deg(\alpha)$ is odd, then $\alpha(x)=0$.
\item If $\deg(\alpha)$ is even, then $\alpha(x)$ is the reduction mod $\ell$ of an integral Hodge class.
\end{enumerate}
\end{cor}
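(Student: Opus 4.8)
\textbf{Proof plan for Corollary \ref{Hodgeobstr}.}

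The plan is to reduce everything to Proposition \ref{stability} together with the $\cA_\ell$-equivariance of Artin's comparison isomorphism $H^*_{\et}(X,\Z/\ell)\isoto H^*(X(\C),\Z/\ell)$, which was recalled in \S\ref{Steenrodetale}. Since $\C$ contains a primitive $\ell^2$-th root of unity, Proposition \ref{stability}~(i) tells us that $H^{2*}_{\et}(X,\bmu_\ell^{\otimes *})_{\alg}$ is stable under the whole Steenrod algebra $\cA_\ell$; and after choosing such a root of unity we may use Remark \ref{rems}~(iv) to identify all the twisted coefficient sheaves $\bmu_\ell^{\otimes r}$ with $\Z/\ell$ in an $\cA_\ell$-equivariant way, so that the statement becomes one about the single graded ring $H^*_{\et}(X,\Z/\ell)\cong H^*(X(\C),\Z/\ell)$.

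First I would set up the degree bookkeeping. Given $x\in H^{2c}(X(\C),\Z/\ell)_{\alg}$, transport it through Artin comparison to an algebraic class in $H^{2c}_{\et}(X,\Z/\ell)$; by Proposition \ref{stability}~(i) its image $\alpha(x)$ under any $\alpha\in\cA_\ell$ is again algebraic, hence so is $\alpha(x)$ viewed back in singular cohomology. Now an algebraic class in $H^{2d}(X(\C),\Z/\ell)$ lies in the image of $\CH^d(X)/\ell$, and by definition of the cycle class map this image is contained in the reduction mod $\ell$ of the image of $\CH^d(X)\to H^{2d}(X(\C),\Z)$, which in turn lands in the group of integral Hodge classes $\Hdg^{2d}(X,\Z)$ (a codimension-$d$ cycle has Hodge type $(d,d)$). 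This immediately gives part (ii): if $\deg(\alpha)=2e$ is even, then $\alpha(x)\in H^{2c+2e}(X(\C),\Z/\ell)$ has even degree $2(c+e)$, is algebraic, hence is the reduction of an integral Hodge class. For part (i): if $\deg(\alpha)$ is odd, then $\alpha(x)$ lives in odd cohomological degree $2c+\deg(\alpha)$, so it cannot be algebraic unless it is zero; but it \emph{is} algebraic by Proposition \ref{stability}~(i), and $H^{2*}_{\et}(X,\bmu_\ell^{\otimes *})_{\alg}$ is by construction concentrated in even degrees (the cycle class of a codimension-$d$ cycle lives in degree $2d$). Hence $\alpha(x)=0$.

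The only subtlety worth spelling out — and the place where I expect to have to be careful rather than "hard" — is the compatibility of the various identifications: the $\cA_\ell$-equivariance of Artin comparison (cited from \cite[Theorem 8.4]{Friedlander}), the fact that this comparison intertwines the étale and topological cycle class maps, and the fact that the twist-by-twist identifications of Remark \ref{rems}~(iv) are compatible with the cycle class map (a codimension-$d$ cycle class naturally lives in $H^{2d}_{\et}(X,\bmu_\ell^{\otimes d})$, and under the chosen root of unity this matches the untwisted description). All of these are standard, so the proof is essentially a matter of assembling Proposition \ref{stability}, the $\cA_\ell$-equivariance of comparison isomorphisms, and the elementary observation that algebraic classes live in even degree and are reductions of Hodge classes. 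No genuinely new argument is needed; the content is entirely in Proposition \ref{stability}, itself a consequence of Theorem \ref{compatibility}.
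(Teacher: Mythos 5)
Your proposal is correct and follows essentially the same route as the paper: the paper deduces the corollary in one line from Proposition \ref{stability} (applicable since $\C$ contains a primitive $\ell^2$-th root of unity, so algebraic classes are $\cA_\ell$-stable and, being concentrated in even degree, are killed by odd-degree operations) together with the fact that mod $\ell$ algebraic classes are reductions mod $\ell$ of integral cycle classes, hence of integral Hodge classes. The compatibilities you flag (the $\cA_\ell$-equivariance of Artin comparison, its compatibility with cycle class maps, and the twist identifications of Remark \ref{rems}~(iv)) are exactly the ones the paper leaves implicit, so no gap remains.
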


As algebraic classes are reductions mod $\ell$ of Tate classes, we obtain similarly:

\begin{cor}
\label{Tateobstr}
Assume that $k$ is the algebraic closure of a finitely generated field. Let $X$ be a smooth projective variety over $k$.  Fix $\alpha\in\cA_{\ell}$ and $x\in H^{2c}_{\et}(X,\bmu_{\ell}^{\otimes c})_{\alg}$.
\begin{enumerate}[(i)]
\item If $\deg(\alpha)$ is odd, then $\alpha(x)=0$.
\item If $\deg(\alpha)$ is even, then $\alpha(x)$ is the reduction mod $\ell$ of an integral Tate class.
\end{enumerate}
\end{cor}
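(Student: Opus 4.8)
The plan is to deduce Corollary \ref{Tateobstr} directly from Proposition \ref{stability}, exactly in parallel with the way Corollary \ref{Hodgeobstr} follows over $\C$. The only non-formal input that needs to be quoted is the statement that, for a smooth projective variety $X$ over the algebraic closure $k$ of a finitely generated field, the subgroup $H^{2c}_{\et}(X,\bmu_{\ell}^{\otimes c})_{\alg}$ is contained in the reduction modulo $\ell$ of the subgroup of integral Tate classes in $H^{2c}_{\et}(X,\Z_{\ell}(c))$. This is the Galois-theoretic analogue of the fact that algebraic mod $\ell$ classes are reductions of integral Hodge classes, and it is immediate from the conventions of \S\ref{conventions}: an algebraic class is the image under $\cl$ of a cycle, hence lifts to the $\ell$-adic cycle class of that cycle, which is a (integral) Tate class by definition, and the reduction mod $\ell$ of a Tate class is what we mean by a mod $\ell$ Tate class here.

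Granting this, here are the steps. First I would observe that $k$ contains a primitive $\ell^2$-th root of unity: indeed $k$ is algebraically closed and $\ell$ is invertible in $k$ (it is invertible since $k$ has characteristic $\neq \ell$, $\Z_\ell$-cohomology being considered), so $\mu_{\ell^2}\subset k$. Hence Proposition \ref{stability}~(i) applies and $H^{2*}_{\et}(X,\bmu_{\ell}^{\otimes *})_{\alg}$ is $\cA_{\ell}$-stable. In particular, for $\alpha\in\cA_\ell$ and $x\in H^{2c}_{\et}(X,\bmu_\ell^{\otimes c})_{\alg}$, the class $\alpha(x)$ is again algebraic.

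For part (i): if $\deg(\alpha)$ is odd, then $\alpha(x)$ lies in $H^{2c+\deg(\alpha)}_{\et}(X,\bmu_\ell^{\otimes c})$, a group in odd cohomological degree. But $\alpha(x)$ is algebraic, hence lies in the image of $\cl:\CH^{c'}(X)/\ell\to H^{2c'}_{\et}(X,\bmu_\ell^{\otimes c'})$ for the appropriate $c'$; since $2c+\deg(\alpha)$ is odd it is not of the form $2c'$, so the only algebraic class in that degree is $0$, whence $\alpha(x)=0$. (Alternatively one can argue purely via the Chow-side Steenrod operations: Brosnan's operations of odd degree act by zero on $\CH^*(X)/\ell$, and Theorem \ref{compatibility} together with the vanishing of $\omega$ over $k$ gives $\alpha(\cl(x)) = \cl(\alpha(x)) = 0$.) For part (ii): if $\deg(\alpha)$ is even, then $\alpha(x)\in H^{2c+\deg(\alpha)}_{\et}(X,\bmu_\ell^{\otimes c}) = H^{2(c+\deg(\alpha)/2)}_{\et}(X,\bmu_\ell^{\otimes c+\deg(\alpha)/2})$ after the identification of Tate twists modulo $\ell-1$ (Remark \ref{rems}~(ii)), and since $\alpha(x)$ is algebraic it is in particular a mod $\ell$ reduction of an integral Tate class by the discussion above.

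I do not expect any genuine obstacle here; the corollary is essentially a formal consequence of Proposition \ref{stability} once the dictionary "algebraic $\Rightarrow$ mod $\ell$ reduction of an integral Tate class" is recorded. The only point requiring a little care is the bookkeeping of Tate twists (so that the even-degree operation lands in the group where "Tate class" makes sense), which is handled by the convention in Remark \ref{rems}~(ii), together with the remark that over an algebraically closed field all twists $\bmu_\ell^{\otimes r}$ are (non-canonically) isomorphic as $\cA_\ell$-modules via Remark \ref{rems}~(iv), so the choice of twist is immaterial.
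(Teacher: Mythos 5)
Your proof is correct and follows essentially the same route as the paper: Corollary \ref{Tateobstr} is deduced from Proposition \ref{stability}~(i) (applicable since the algebraically closed field $k$ contains $\bmu_{\ell^2}$), with part (i) coming from the fact that the algebraic subgroup is concentrated in even degrees (this is exactly the content of Corollary \ref{basicobstr}) and part (ii) from the observation that a mod $\ell$ algebraic class lifts to the $\ell$-adic class of a cycle, which is an integral Tate class by the conventions of \S\ref{conventions}. The twist bookkeeping you flag is indeed handled by Remarks \ref{rems}~(ii) and (iv), as in the paper.
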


Corollary \ref{Hodgeobstr} (i) and \ref{Tateobstr} (i) are the well-known topological obstructions to the integral Hodge and Tate conjectures going back to Atiyah and Hirzebruch \cite{AH}.
Corollary~\ref{Hodgeobstr}~(ii) and \ref{Tateobstr} (ii) yield new Hodge-theoretic (or Galois-theoretic) obstructions to them.

When $k$ does not contain a primitive $\ell^2$-th root of unity, one can still deduce from Theorem \ref{compatibility} and Lemma~\ref{compaBock} stability properties of algebraic cohomology classes by Steenrod 
operations, and hence obstructions to the algebraicity of cohomology classes that go beyond Proposition
\ref{stability} and Corollary \ref{basicobstr}. We give an example of such an obstruction, to be used in \S\ref{cexfinite} and \S\ref{cexreal},  in the next proposition. 

\begin{prop}
\label{weirdobstr}
Assume that $\ell=2$ and let $X$ be a smooth quasi-projective variety over $k$.  Then
\begin{enumerate}[(i)]
\item $\Sq^2+\frac{c(c-1)}{2}\varpi^2$ sends $H^{2c}_{\et}(X,\bmu_{2}^{\otimes c})_{\alg}$ to $H^{2c+2}_{\et}(X,\bmu_{2}^{\otimes c+1})_{\alg}$;
\item $\Sq^3+(c+1)\,\varpi\cdot\Sq^2+\frac{c(c-1)}{2}\,\varpi^3$ kills $H^{2c}_{\et}(X,\bmu_{2}^{\otimes c})_{\alg}$.
\end{enumerate} 
\end{prop}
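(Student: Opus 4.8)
The plan is to deduce both statements directly from Theorem~\ref{compatibility} and Lemma~\ref{compaBock} by extracting the terms of appropriate degree from the total-Steenrod-square identity. Fix $x\in\CH^c(X)/2$ and write $y:=\cl(x)\in H^{2c}_{\et}(X,\bmu_2^{\otimes c})_{\alg}$. By Theorem~\ref{compatibility}, one has $\Sq(y)=\sum_{i\geq 0}(1+\omega)^{c-i}\cdot\cl(\Sq^{2i}(x))$. Since $\cl(\Sq^{2i}(x))$ lives in $H^{2c+2i}_{\et}(X,\bmu_2^{\otimes c+i})$ and $(1+\omega)^{c-i}$ contributes only in degrees coming from powers of $\omega\in H^1_{\et}(\Spec k,\Z/2)$, I would isolate the component of $\Sq(y)$ of cohomological degree $2c+2$, respectively $2c+3$. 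In degree $2c+2$ the only contributions are: from the $i=1$ term, $\cl(\Sq^2(x))$; and from the $i=0$ term, the degree-$2$ part of $(1+\omega)^c$, namely $\binom{c}{2}\omega^2\cdot\cl(x)=\frac{c(c-1)}{2}\omega^2\cdot y$. Hence $\Sq^2(y)+\frac{c(c-1)}{2}\omega^2\cdot y=\cl(\Sq^2(x))$, which is algebraic. This proves (i).

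For (ii), I would combine the degree-$(2c+3)$ component of the same identity with Lemma~\ref{compaBock}. In degree $2c+3$ the contributions to $\Sq(y)$ are: from $i=1$, the term $\omega\cdot\cl(\Sq^2(x))$ (the degree-$1$ part of $(1+\omega)^{c-1}$ times $\cl(\Sq^2 x)$); from $i=0$, the term $\binom{c}{3}\omega^3\cdot y$; and there is no genuinely new $i\geq 2$ contribution because $\Sq^{2i}(x)$ would need to pair against a negative power of $(1+\omega)$ only if $i>c$, but in any case those land in even internal Chow degree so contribute only to even-degree-$\pm\omega^{odd}$ pieces — more carefully, the $i=1$ term is $(1+\omega)^{c-1}\cl(\Sq^2 x)$ whose degree-$(2c+3)$ part is $(c-1)\omega\cdot\cl(\Sq^2 x)$, and I must also account for $\Sq^3(y)$ itself appearing on the left. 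Writing out $\Sq(y)=y+\Sq^1(y)+\Sq^2(y)+\Sq^3(y)+\cdots$ and matching degree $2c+3$ gives $\Sq^3(y)+\Sq^1(\text{[deg }2c+1\text{ part]})$; but $\Sq^1(y)=\beta(y)=-c\,\omega\cdot y=c\,\omega\cdot y$ by Lemma~\ref{compaBock} (characteristic $2$), so the degree-$(2c+1)$ part of $\Sq(y)$ is exactly $c\,\omega\cdot y$, and I need $\Sq$ applied again — here I must instead directly read off the degree-$(2c+3)$ coefficient on the right-hand side, which is $\binom{c}{3}\omega^3\,y+(c-1)\,\omega\cdot\cl(\Sq^2 x)$. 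Substituting $\cl(\Sq^2 x)=\Sq^2(y)+\frac{c(c-1)}{2}\omega^2 y$ from (i) yields $\Sq^3(y)=(c-1)\omega\cdot\Sq^2(y)+\big[\binom{c}{3}+(c-1)\tfrac{c(c-1)}{2}\big]\omega^3 y$, and a short binomial identity shows the $\omega^3$ coefficient equals $\binom{c}{2}\omega^3$ modulo $2$; rearranging and using $c-1\equiv c+1\pmod 2$ gives $\Sq^3(y)+(c+1)\omega\cdot\Sq^2(y)+\frac{c(c-1)}{2}\omega^3 y=0$.

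The only real subtlety, and the step I expect to require the most care, is the bookkeeping of which terms of $(1+\omega)^{c-i}\cl(\Sq^{2i}(x))$ land in cohomological degree $2c+2$ and $2c+3$: one must check that no term with $i\geq 2$ contributes (since $\cl(\Sq^{2i} x)$ already has degree $\geq 2c+4$ and $\omega$-powers only raise the degree) and that the formal expansions of $(1+\omega)^{\pm 1}$ used to define $w^{\et}$ cause no sign or coefficient errors when reduced modulo $2$. All of this is routine once the degrees are tracked, and the $\mathbf{F}_2$-binomial identities $\binom{c}{2}\equiv\binom{c(c-1)}{2}\pmod 2$ and $\binom{c}{3}+(c-1)\binom{c}{2}\equiv\binom{c}{2}\pmod 2$ are elementary. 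I would present the argument as: (a) invoke Theorem~\ref{compatibility} and expand, (b) take the degree-$(2c+2)$ part to get (i), (c) take the degree-$(2c+3)$ part, substitute (i) and Lemma~\ref{compaBock}, and simplify the binomial coefficients mod $2$ to get (ii).
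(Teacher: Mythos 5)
Your proof is correct; part (i) is exactly the paper's argument (the degree-$(2c+2)$ component of Theorem \ref{compatibility} gives $\cl(\Sq^2(x))=\Sq^2(\cl(x))+\tbinom{c}{2}\omega^2\cdot\cl(x)$), but your part (ii) takes a genuinely different, if closely related, route. The paper deduces (ii) from (i) by applying Lemma \ref{compaBock} to the algebraic class $\cl(\Sq^2(x))$ of codimension $c+1$, so that the composite $(\Sq^1+(c+1)\,\omega)\circ(\Sq^2+\tfrac{c(c-1)}{2}\omega^2)$ kills $\cl(x)$, and then expands this composite using the Adem relation $\Sq^1\Sq^2=\Sq^3$, the derivation property of $\Sq^1$, and Lemma \ref{compaBock} once more. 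You instead read off the degree-$(2c+3)$ component of Theorem \ref{compatibility}, namely $\Sq^3(\cl(x))=\tbinom{c}{3}\omega^3\cl(x)+(c-1)\,\omega\cdot\cl(\Sq^2(x))$, substitute (i), and verify $\tbinom{c}{3}+(c-1)\tbinom{c}{2}\equiv\tbinom{c}{2}\pmod 2$ (which indeed holds: $3\tbinom{c}{3}=(c-2)\tbinom{c}{2}$ gives $\tbinom{c}{3}\equiv c\tbinom{c}{2}$, and $c+(c-1)\equiv 1$). Your version avoids the Adem relation and Lemma \ref{compaBock} entirely (the degree-$(2c+1)$ component of Theorem \ref{compatibility} recovers that lemma for $\ell=2$ anyway), at the price of the binomial bookkeeping; the paper's factorization makes (ii) a formal consequence of (i) plus the Bockstein computation, with no coefficient check. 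Two small things to clean up in your write-up: the detour in (ii) about $\Sq^1(y)=c\,\omega\cdot y$ and ``applying $\Sq$ again'' is a dead end and should be deleted, since your final argument uses only the degree-$(2c+3)$ coefficient identity (and the correct reason that no $i\geq 2$ term contributes is simply that $\cl(\Sq^{2i}(x))$ has degree $\geq 2c+4$); and the ``identity'' $\tbinom{c}{2}\equiv\tbinom{c(c-1)}{2}$ should read $\tbinom{c}{2}=\tfrac{c(c-1)}{2}$, an equality rather than a congruence.
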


\begin{proof}
If $x\in \CH^c(X)/2$, it follows from Theorem \ref{compatibility} that 
$$\cl(\Sq^2(x))=\Sq^2(\cl(x))+\frac{c(c-1)}{2}\varpi^2\cdot\cl(x).$$
This proves (i). 
 Applying Lemma~\ref{compaBock}, we deduce that $H^{2c}_{\et}(X,\bmu_{2}^{\otimes c})_{\alg}$ is killed by $(\Sq^1+(c+1)\,\varpi)(\Sq^2+\frac{c(c-1)}{2}\varpi^2)$. Developing and applying Lemma \ref{compaBock} again, we see that
this operator equals $\Sq^3+(c+1)\,\varpi\cdot\Sq^2+\frac{c(c-1)}{2}\,\varpi^3$, proving (ii).
\end{proof}

\subsection{Relation with complex cobordism}
\label{parMU}

In \cite{Totarobordism}, Totaro reinterpreted the Atiyah--Hirzebruch topological obstructions to the integral Hodge conjecture using complex cobordism. His point of view is that algebraic cohomology classes on a smooth projective complex  variety $X$ always lie in the image of the natural map ${\mu:\MU^*(X)\to H^*(X(\C),\Z)}$. Consequently, an integral Hodge class not in the image of $\mu$ cannot be algebraic. This fact also explains the topological obstructions to algebraicity given by Corollary \ref{Hodgeobstr} (i) (see \cite[Proposition 3.6]{BOConiveau}). 
We now explain how to similarly understand the obstruction of Corollary \ref{Hodgeobstr} (ii) using complex cobordism.

Let $\mathbf{MU}$ and $\mathbf{H}(\Z)$ be spectra representing complex cobordism and ordinary cohomology with integral coefficients. The morphism $\mu: \MU^*(X)\to H^*(X(\C),\Z)$ is induced by a map of spectra $\mu:\mathbf{MU}\to\mathbf{H}(\Z)$. Let $\mathbf{H}(\pi_*(\mathbf{MU})_\C)$ denote the Eilenberg--MacLane spectrum of the graded ring $\pi_*(\mathbf{MU})_\C:=\pi_*(\mathbf{MU})\otimes_{\Z}\C$. In \cite[\S 5]{HQ}, Hopkins and Quick define a map $\iota:\mathbf{MU}\to \mathbf{H}(\pi_*(\mathbf{MU})_\C)$ inducing the complexification $\pi_*(\mathbf{MU})\to \pi_*(\mathbf{MU})_\C$ on homotopy groups. If $X$ is a smooth projective complex variety of dimension $n$ and $c\in\Z$, the map $\iota$ induces morphisms 
 \begin{equation}
\label{MUHodge}
\iota_*:\MU^{2c}(X)\to \bigoplus_{e=0}^{n-c} H^{2c+2e}(X(\C),\pi_{2e}(\mathbf{MU})_{\C}).
\end{equation}
One recovers the morphism $\mu:\MU^{2c}(X)\to H^{2c}(X(\C),\C)$ by projecting (\ref{MUHodge}) onto the $e=0$ summand. 
 In \cite[(17)]{HQ}, Hopkins and Quick define $\Hdg_{\MU}^{2c}(X)$ to be the set of $y\in \MU^{2c}(X)$ such that all the components of $\iota_*y$ are Hodge. They prove that any algebraic class $x\in H^{2c}(X(\C),\Z)$ belongs to $\mu(\Hdg_{\MU}^{2c}(X))$ (see \cite[Corollary~7.12]{HQ}). We claim that this obstruction to algebraicity combining Hodge theory and complex cobordism is finer than Corollary \ref{Hodgeobstr}.

\begin{prop}
Let $X$ be a smooth projective complex variety and let $\ell$ be a prime number. Fix $\alpha\in\cA_{\ell}$ and let $x$ be a class in the image of the composition
\begin{equation*}
\label{composition}
\Hdg^{2c}_{\MU}(X)\xrightarrow{\mu} H^{2c}(X(\C),\Z)\xrightarrow{\rho} H^{2c}(X(\C),\Z/\ell)
\end{equation*}
of $\mu$ and of the reduction modulo $\ell$ morphism $\rho$.
\begin{enumerate}[(i)]
\item If $\deg(\alpha)$ is odd, then $\alpha(x)=0$.
\item If $\deg(\alpha)$ is even, then $\alpha(x)$ is the reduction mod $\ell$ of an integral Hodge class. 
\end{enumerate}
\end{prop}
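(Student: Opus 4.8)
The plan is to show that the assertion reduces, via the Hopkins--Quick map $\iota$ and the Steenrod operations in complex cobordism, to the already-established Corollary~\ref{Hodgeobstr}. The key point is that $\mathbf{MU}$ carries its own $\cA_{\ell}$-action (coming from mod $\ell$ cobordism operations, or equivalently from the $\cA_{\ell}$-module structure on $H^*(\mathbf{MU};\Z/\ell)$), and all the maps in sight --- $\mu$, $\rho$, and $\iota$ --- are compatible with Steenrod operations in a suitable sense. So the first step is to set up this compatibility precisely: reduction mod $\ell$ intertwines nothing new since $\rho$ is just change of coefficients and $\cA_{\ell}$ acts on mod $\ell$ cohomology; the content is that $\mu:\mathbf{MU}\to\mathbf{H}(\Z)$ is a map of spectra, hence induces an $\cA_{\ell}$-equivariant map on mod $\ell$ cohomology. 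Thus for $y\in\MU^{2c}(X)$ and $\alpha\in\cA_{\ell}$ one has $\alpha(\rho(\mu(y)))=\rho(\mu(\alpha_{\MU}(y)))$ where $\alpha_{\MU}$ is the cobordism operation lifting $\alpha$ (acting on $\MU^*(X;\Z/\ell)$, or after clearing the $\ell$-adic ambiguity, on an appropriate integral lift).

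The crux is then to promote $\Hdg^{2c}_{\MU}(X)$ to an $\cA_{\ell}$-stable (more precisely: even-degree-$\cA_{\ell}$-stable) subset of cobordism, so that applying $\alpha$ to $x=\rho(\mu(y))$ with $y\in\Hdg^{2c}_{\MU}(X)$ lands back in $\rho(\mu(\Hdg^{*}_{\MU}(X)))$, after which Corollary~\ref{Hodgeobstr} applies verbatim. To see the stability, I would use the Hopkins--Quick map $\iota:\mathbf{MU}\to\mathbf{H}(\pi_*(\mathbf{MU})_\C)$. The target is an Eilenberg--MacLane spectrum for a $\C$-vector space, so mod $\ell$ cobordism operations act trivially on its homotopy and the relevant compatibility reads: $\iota_*(\alpha_{\MU}(y))$ is obtained from $\iota_*(y)$ by applying $\alpha$ componentwise in each $H^{2c+2e}(X(\C),\pi_{2e}(\mathbf{MU})_\C)$ --- again because $\iota$ is a map of spectra. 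Now if $\alpha$ has even degree, applying $\alpha$ to each Hodge component of $\iota_*(y)$ yields a Hodge class (Hodge classes, being the degree-$(p,p)$ rational classes, are preserved by Steenrod operations since the operations are defined over $\Z$ and commute with the Galois/Hodge symmetry; more simply, even-degree Steenrod operations of Hodge classes are Hodge by the same argument used in Corollary~\ref{Hodgeobstr}(ii) combined with the fact that $\iota_*(y)$ being Hodge means it is a Hodge class with coefficients in a $\Q$-vector space). Hence $\alpha_{\MU}(y)\in\Hdg^{2c+\deg\alpha}_{\MU}(X)$, and $\alpha(x)=\rho(\mu(\alpha_{\MU}(y)))$ lies in the image of the composition $\Hdg^{*}_{\MU}(X)\xrightarrow{\mu}H^*(X(\C),\Z)\xrightarrow{\rho}H^*(X(\C),\Z/\ell)$, which consists of algebraic-type classes in the sense already controlled: by Hopkins--Quick \cite[Corollary 7.12]{HQ}, $\mu(\Hdg^{*}_{\MU}(X))$ contains all algebraic classes, but we only need the reverse containment in the conclusion, namely that $\rho\mu(\Hdg^{2c}_{\MU})\subseteq H^{2c}(X(\C),\Z/\ell)_{\alg}$ would be false --- so instead I finish directly: for $\deg(\alpha)$ odd, $\alpha$ applied componentwise to the Hodge class $\iota_*(y)$ in each even-degree summand with $\Q$-coefficients gives zero because odd-degree Steenrod operations kill classes that are reductions of integral (here, rational, hence after clearing denominators integral) classes? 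No --- that is not automatic.

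Let me restructure the endgame: the cleanest route is \emph{not} to reprove Corollary~\ref{Hodgeobstr} but to invoke it. For part (i), write $x=\rho(\mu(y))$ with $y\in\Hdg^{2c}_{\MU}(X)$. Since $\mu$ is a map of spectra, $\mu(y)$ lies in the image of $\MU^{2c}(X)\to H^{2c}(X(\C),\Z)$, so $x$ lies in $H^{2c}(X(\C),\Z/\ell)$ in the image of mod $\ell$ cobordism; by the standard fact (Totaro \cite{Totarobordism}, or the $\cA_{\ell}$-module structure of $H^*(\mathbf{MU};\Z/\ell)$ being free over $\cA_{\ell}/(\beta)$ --- Milnor) every such class is killed by odd-degree operations, giving (i). For part (ii), by the argument of the previous paragraph $\alpha_{\MU}(y)\in\Hdg^{2c+\deg\alpha}_{\MU}(X)$ when $\deg\alpha$ is even, hence $\alpha(x)=\rho\mu(\alpha_{\MU}(y))=\mu_*$ of an $\MU$-Hodge class; projecting $\iota_*(\alpha_{\MU}(y))$ to the $e=0$ summand shows $\alpha(x)$ is the reduction mod $\ell$ of a class in $H^{2c+\deg\alpha}(X(\C),\Z)$ whose image in $H^{2c+\deg\alpha}(X(\C),\C)$ is Hodge, i.e.\ of an integral Hodge class. \textbf{The main obstacle} will be pinning down the $\cA_{\ell}$-equivariance of $\iota$ rigorously --- that is, that the operation $\alpha_{\MU}$ on cobordism is carried by $\iota_*$ to the componentwise action of $\alpha$ on the Eilenberg--MacLane target --- since $\iota$ is only a map of spectra (not of ring spectra, and not obviously $\cA_{\ell}$-linear in a way respecting the Hodge decomposition of $\pi_*(\mathbf{MU})_\C$), so one must check the operations in play are genuinely induced by maps of spectra $\mathbf{MU}\to\Sigma^{?}\mathbf{MU}$ or $\mathbf{H}\Z/\ell\to\Sigma^{?}\mathbf{H}\Z/\ell$ and use naturality, together with the observation that the Hodge condition on a class with $\C$-coefficients is stable under even-degree Steenrod operations (which requires knowing those operations are defined integrally and commute with complex conjugation on $H^*(X(\C))$).
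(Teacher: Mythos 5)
Your overall architecture for (ii) — lift $\alpha$ to a cobordism operation, transport it through the Hopkins--Quick map $\iota$, deduce that the Hodge condition in $\MU$ is preserved, and conclude from the bottom square — is the same as the paper's, and your final version of (i) is fine: classes in the image of $\rho\circ\mu$ are killed by odd-degree operations by the standard Atiyah--Hirzebruch/Totaro fact (the paper just cites \cite[Proposition 3.6]{BOConiveau} for this, the point being that $H^{\mathrm{odd}}(\mathbf{MU},\Z/\ell)=0$). But in (ii) the two steps you yourself flag as ``the main obstacle'' are exactly where the content lies, and your substitute reasoning for them does not work. First, you need a genuine stable operation of spectra $\talpha:\mathbf{MU}\to\Sigma^{2d}\mathbf{MU}$ lifting $\alpha$ through $\rho\circ\mu$ — an integral $\MU$-operation, not an operation on $\MU^*(X;\Z/\ell)$ nor an appeal to the $\cA_{\ell}$-module structure of $H^*(\mathbf{MU},\Z/\ell)$; this is Landweber's theorem \cite[Theorem 8.1]{Landweber} and cannot be waved through. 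Second, the compatibility with $\iota$ is not that ``$\iota_*(\talpha(y))$ is obtained from $\iota_*(y)$ by applying $\alpha$ componentwise'': the mod $\ell$ operation $\alpha$ does not even act on $H^{*}(X(\C),\pi_{2e}(\mathbf{MU})_{\C})$. The correct statement is that $\iota\circ\talpha$ is homotopic to $\mathbf{H}(\pi_*(\talpha)_{\C})\circ\iota$, i.e.\ on the Eilenberg--MacLane side $\talpha$ acts purely through the coefficient homomorphism $\pi_*(\talpha)_{\C}$; the paper proves this by identifying maps $\mathbf{MU}\to\Sigma^{*}\mathbf{H}(\pi_*(\mathbf{MU})_{\C})$ with $\Hom(H_*(\mathbf{MU},\Z),\pi_{*}(\mathbf{MU})_{\C})$ and comparing the two composites on homotopy, where they agree tautologically (using the injectivity of the Hurewicz map of $\mathbf{MU}$ \cite[II, Corollary 8.11]{Adams}).

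The third point is an actual error rather than an omission: you justify the stability of the Hodge condition by asserting that ``Hodge classes are preserved by Steenrod operations since the operations are defined over $\Z$ and commute with the Galois/Hodge symmetry''. This is false, and its failure is precisely what the paper exploits: there is no reason for an even-degree Steenrod operation to send reductions of integral Hodge classes to reductions of integral Hodge classes — otherwise Corollary \ref{Hodgeobstr} (ii) would carry no obstruction at all. What makes the argument work is not any compatibility of $\alpha$ with Hodge theory, but the fact that after applying $\iota$ the operation becomes the $\C$-linear coefficient map $\pi_{2e}(\mathbf{MU})_{\C}\to\pi_{2e-2d}(\mathbf{MU})_{\C}$, which leaves the underlying cohomology classes of $X$ unchanged and hence trivially sends Hodge components to Hodge components. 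This yields $\talpha(y)\in\Hdg^{2c+2d}_{\MU}(X)$, and then the lower square gives $\alpha(x)=\rho(\mu(\talpha(y)))$, the reduction mod $\ell$ of the integral Hodge class $\mu(\talpha(y))$. As written, your proof of (ii) therefore has a genuine gap: the equivariance statement you rely on is not the right one, and the Hodge-stability step rests on a false assertion.
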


\begin{proof}
Assertion (i) follows from \cite[Proposition 3.6]{BOConiveau} (the argument given there when $\ell=2$ works as well when $\ell$ is odd). We now prove (ii).

Landweber \cite[Theorem 8.1]{Landweber} has shown the existence of a
stable cohomological operation $\talpha$ in complex cobordism such that $(\rho\circ \mu)\circ\talpha=\alpha\circ (\rho\circ\mu)$. Letting~$2d$ denote the degree of $\alpha$, we get a diagram of spectra
 \begin{equation}
\label{spectra}
\begin{aligned}
\xymatrix
@R=0.4cm
{
 \mathbf{H}(\pi_*(\mathbf{MU})_\C)\ar^{\pi_*(\talpha)_\C\hspace{1em}}
[r]&\Sigma^{2d}\mathbf{H}(\pi_*(\mathbf{MU})_\C)
\\
\mathbf{MU}\ar^{\talpha\hspace{.9em}}[r]\ar^{\iota}[u]\ar_{\rho\circ\mu}[d]& \Sigma^{2d}\mathbf{MU}\ar^{\iota}[u]\ar_{\rho\circ\mu}[d] \\
 \mathbf{H}(\Z/\ell)\ar^{\alpha\hspace{1em}}[r]&\Sigma^{2d}\mathbf{H}(\Z/\ell)
}
\end{aligned}
\end{equation}
where $\mathbf{H}(\Z/\ell)$ is the Eilenberg--MacLane spectrum with coefficients $\Z/\ell$. We claim that (\ref{spectra}) is commutative up to homotopy. The lower square commutes by our choice of $\talpha$. For the upper square, we need to show that $\iota\circ\talpha$ and $\pi_*(\talpha)_{\C}\circ\iota$ classify the same element of $H^*(\mathbf{MU},\pi_{*-2d}(\mathbf{MU})_{\C})=\Hom(H_*(\mathbf{MU},\Z),\pi_{*-2d}(\mathbf{MU})_{\C})$.
As the Hurewicz morphism $\pi_*(\mathbf{MU})\to H_*(\mathbf{MU},\Z)$ is injective by \cite[II, Corollary 8.11]{Adams}, it suffices to verify that the maps $\iota\circ\talpha$ and $\pi_*(\talpha)_{\C}\circ\iota$ induce the same morphism $\pi_*(\mathbf{MU})\to \pi_{*-2d}(\mathbf{MU})_{\C}$, which is tautological.

 Now let $y\in \Hdg^{2c}_{\MU}(X)$ be such that $x=\rho(\mu(y))$.
 The upper square of (\ref{spectra}) shows that $\talpha(y)\in \Hdg^{2c+2d}_{\MU}(X)$. 
By the lower square of~(\ref{spectra}), the class $\alpha(x)$ is the reduction mod $\ell$ of the integral Hodge class $\mu(\talpha(y))$.
\end{proof}

It should be possible to carry out a similar analysis in the setting of the integral Tate conjecture, by relying on Quick's work \cite{Quickcobordism,Quicktorsion}. We do not do it here.

\section{Non-algebraic cohomology classes}

In this section, we construct new examples of non-algebraic cohomology classes, over algebraically closed fields (in \S \ref{par5} and \S\ref{partf}), finite fields (in \S\ref{cexfinite}) and the field~$\R$ of real numbers (in \S\ref{cexreal}). Applications to unramified cohomology are made explicit in \S\ref{unramified}.  In addition, we review in \S\ref{parapprox} the construction of algebraic approximations of classifying spaces that we use in \S\ref{partf} and \S\ref{cexfinite}.

We denote by $\rho$ all the reduction modulo $\ell$ morphisms: in Betti cohomology,  in equivariant Betti cohomology, or in $\ell$-adic \'etale cohomology, where the prime number $\ell$ is always clear from the context (and equal to $2$ in \S\ref{par5} and \S\ref{cexreal}).

\subsection{A fivefold failing the integral Tate conjecture \texorpdfstring{over $\oF$}{}}
\label{par5}

\begin{prop}
\label{prop5}
There exists a smooth projective complex variety $Y$ such that:
\begin{enumerate}[(i)]
\item The variety $Y$ has dimension $5$.
\item There exists $y\in H^4(Y(\C),\Z)[2]$ such that $\Sq^3(\rho(y))\neq 0$.
\item The variety $Y$ is defined over $\overline{\Q}$, with good reduction at all places not above~$2$.
\end{enumerate}
\end{prop}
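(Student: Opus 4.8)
The plan is to build $Y$ by the Atiyah--Hirzebruch recipe, starting from an algebraic approximation of a classifying space and then cutting down to dimension $5$ by taking hyperplane sections, while keeping track of a degree-$4$ torsion class supported on which an odd Steenrod operation is nonzero. The classical source of such a class is $B(\Z/2)^3$ (or $B\bmu_2^{\times 3}$), whose mod $2$ cohomology is a polynomial ring $\F_2[x_1,x_2,x_3]$ with $\deg x_i=1$: there the element $\beta(x_1)x_2x_3 + x_1\beta(x_2)x_3 + x_1x_2\beta(x_3)$ (equivalently, the degree-$4$ part of a suitable product) is a $2$-torsion class whose image under $\Sq^3$ is $(x_1x_2x_3)^2\neq 0$ — this is exactly the mechanism Atiyah--Hirzebruch exploited, as recalled in \S\ref{par5}. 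So first I would invoke the algebraic approximation construction reviewed in \S\ref{parapprox} to produce a smooth projective complex variety $V$, defined over $\overline{\Q}$ with good reduction away from $2$, together with a map $V(\C)\to B(\Z/2)^3$ inducing an isomorphism on $H^{\le N}(-,\Z)$ for $N$ large (say $N\ge 5$). Pulling back the degree-$4$ class gives $y_V\in H^4(V(\C),\Z)[2]$ with $\Sq^3(\rho(y_V))\ne 0$, all of this happening over $\overline{\Q}$ with the stated reduction behaviour because the approximations of \S\ref{parapprox} are built over $\Z[1/2]$.

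The second step is to descend the dimension from that of $V$ (which is large) to exactly $5$ while preserving the nonvanishing of $\Sq^3$ on a degree-$4$ class. The standard tool is the weak Lefschetz theorem: a smooth hyperplane section $W\subset V$ (with respect to a very ample line bundle defined over $\overline{\Q}$, arranged to have good reduction outside $2$) has $H^i(W(\C),\Z)\isoto H^i(V(\C),\Z)$ for $i<\dim V -1$ and an injection for $i=\dim V-1$. Since the class we care about sits in degree $4$, iterating this $\dim V - 5$ times produces a smooth projective fivefold $Y$ over $\overline{\Q}$, with good reduction away from $2$, such that restriction $H^4(V(\C),\Z)\to H^4(Y(\C),\Z)$ is an isomorphism for $\dim V\ge 6$; because restriction commutes with reduction mod $2$ and with Steenrod operations, the image $y$ of $y_V$ still satisfies $y\in H^4(Y(\C),\Z)[2]$ and $\Sq^3(\rho(y))\ne 0$. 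That gives all three conclusions (i)--(iii).

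The main obstacle, and the point requiring care rather than routine invocation, is ensuring that the algebraic approximation of $B(\Z/2)^3$ can be taken over $\overline{\Q}$ (or better $\Z[1/2]$) \emph{and} that the successive ample divisors used for weak Lefschetz can be chosen over the same base with good reduction outside $2$: one must check that Bertini-type arguments producing smooth members of a base-point-free linear system work in the arithmetic setting of good models over $\Z[1/2]$, or alternatively spread out a complex/$\overline{\Q}$ construction to a scheme over $\Z[1/2]$ and shrink. I expect this to be handled by the material of \S\ref{parapprox} together with a standard spreading-out argument: one realizes the whole tower (approximation plus chain of hyperplane sections) over some $\Z[1/N]$, then notes that the only prime that genuinely must be inverted — because it is the prime at which the torsion class lives and at which the group scheme $\bmu_2$ is not étale — is $2$, and that after possibly enlarging the very ample bundles one can achieve smoothness of the reductions at all primes not dividing $2$. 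A secondary, smaller point is to confirm that weak Lefschetz on $\Z$-cohomology (not just $\Q$-cohomology) holds in the relevant range so that the $2$-torsion is genuinely transported; this is the classical Lefschetz hyperplane theorem for integral singular cohomology of a smooth hyperplane section of a smooth projective variety, so no new input is needed.
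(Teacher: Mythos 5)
Your strategy has a genuine gap at the dimension-reduction step, and it is exactly the gap that forces the paper to argue differently. The class $y$ itself does survive the hyperplane sections (weak Lefschetz controls $H^4$ as long as the sections have dimension $\geq 5$), but the obstruction you must preserve is the nonvanishing of $\Sq^3(\rho(y))$, which lives in degree $7$. For a smooth ample divisor $W\subset V$ of dimension $m$, weak Lefschetz only says that $H^i(V)\to H^i(W)$ is an isomorphism for $i<m$ and injective for $i=m$; once your chain of sections drops below dimension $7$ there is no control whatsoever on $H^7$, and the compatibility of restriction with Steenrod operations only gives $\Sq^3(\rho(y|_Y))=\Sq^3(\rho(y_V))|_Y$, with no reason for the right-hand side to be nonzero on the fivefold (a Poincar\'e-duality argument would require knowledge of the full cohomology ring of $V$ far outside the range where it agrees with that of the classifying space). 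This is precisely why the Atiyah--Hirzebruch-type counterexamples in the literature have dimension $\geq 7$, as the paper notes after Theorem \ref{th1bintro}. The paper's proof of Proposition \ref{prop5} avoids classifying spaces altogether: it sets $Y=Z\times E$ with $Z$ the fourfold of \cite[Proposition 5.3]{BOConiveau}, which carries a $2$-torsion class $\sigma\in H^3(Z(\C),\Z)$ with $\rho(\sigma)^2\neq 0$, and $E$ an elliptic curve; for $y=p_1^*\sigma\cdot p_2^*\tau$ the Cartan formula gives $\Sq^3(\rho(y))=p_1^*\rho(\sigma)^2\cdot p_2^*\rho(\tau)$, whose nonvanishing is read off directly from the K\"unneth formula, with no Lefschetz argument. (A minor slip in your heuristic: on $B(\Z/2)^3$ the value of $\Sq^3$ on the degree-$4$ torsion class is a degree-$7$ class, not $(x_1x_2x_3)^2$; nonvanishing does hold, but the mechanism cannot be transported to dimension $5$ as you propose.)

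There is a second, smaller gap in your treatment of (iii). The approximations of \S\ref{parapprox} are built over $\Z_{(p)}$ for one prime $p$ at a time, because the smooth complete intersection is produced by Poonen's Bertini theorem over $\F_p$ and then lifted; no Bertini-type statement is available that yields a complete intersection smooth at all primes not dividing $2$ simultaneously, so the assertion that the whole tower can be arranged over $\Z[1/2]$ with smooth reductions at every odd place is not justified by the tools you invoke (it would suffice for Theorem \ref{th1b} to work one odd prime at a time, but it does not prove (iii) as stated). The paper obtains (iii) by a different mechanism: all elliptic curves in the construction of $Z$ and the factor $E$ are chosen with complex multiplication, hence have potentially good reduction everywhere, and the degree-$2$ finite \'etale covers involved specialize in any characteristic different from $2$.
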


\begin{proof}
Let $Z$ be the complex fourfold defined in \cite[Proposition 5.3]{BOConiveau}. It carries a class $\sigma\in H^3(Z(\C),\Z)[2]$ such that $\rho(\sigma)^2\neq 0$ in $H^6(Z(\C),\Z/2)$. Let $E$ be a complex elliptic curve. Choose $\tau\in H^1(E(\C),\Z)$ not divisible by $2$. Set $Y:=Z\times E$ and $y:=p_1^*\sigma\cdot p_2^*\tau$. Then Cartan's formula and the vanishing of $\Sq^1(\rho(\tau))$ imply $$\Sq^3(\rho(y))=p_1^*\Sq^3(\rho(\sigma))\cdot p_2^*\rho(\tau)=p_1^*\rho(\sigma)^2\cdot p_2^*\rho(\tau)\neq 0.$$

That $Y$ may be chosen to have good reduction at odd places was explained in \cite[\S 4.1]{ScaSuz}. To give more details,  choose the elliptic curve $E$, as well as all the elliptic curves appearing in the construction of $Z$ given in \cite[\S5.1]{BOConiveau} to have complex multiplication. Such elliptic curves are defined over a number field and have potentially good reduction everywhere. Combined with the fact that degree~$2$ finite \'etale covers of smooth projective varieties specialize in characteristic not $2$ (see \cite[X, Th\'eor\`eme~3.8]{SGA1}),
 this shows that the construction of $Z$, hence also of $Y$, works in mixed characteristic away from characteristic~$2$. The variety $Y$ is thus defined over $\overline{\Q}$ with good reduction away from~$2$.
\end{proof}

\begin{thm}
\label{th1b}
Let $p$ be an odd prime number. There exists a smooth projective variety $X$ of dimension $5$ over $\oF$ on which the $2$-adic integral Tate conjecture for codimension $2$ cycles fails.
\end{thm}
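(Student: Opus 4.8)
The plan is to deduce Theorem~\ref{th1b} from Proposition~\ref{prop5} by a specialization argument, reducing the complex variety $Y$ modulo a place of $\overline{\Q}$ lying above $p$ and transporting the non-algebraicity obstruction from Betti cohomology to $2$-adic \'etale cohomology.

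First I would take the variety $Y$ over $\overline{\Q}$ provided by Proposition~\ref{prop5}, together with the class $y\in H^4(Y(\C),\Z)[2]$ with $\Sq^3(\rho(y))\neq 0$. By part (iii), $Y$ has good reduction at all places not above $2$; since $p$ is odd, I may choose a place $v$ of $\overline{\Q}$ above $p$ and a smooth projective model over the valuation ring, whose special fibre $X$ is a smooth projective variety of dimension $5$ over $\oF$. The smooth proper base change theorem gives a $\Gal$-equivariant (hence $\cA_2$-equivariant, by the compatibility of Steenrod operations with base change discussed in \S\ref{Steenrodetale}) isomorphism between the mod $2$ \'etale cohomology of $X_{\oF}$ and that of $Y_{\overline{\Q}}$, and Artin's comparison identifies the latter with $H^*(Y(\C),\Z/2)$ as $\cA_2$-modules. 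Transporting $\rho(y)$ along these isomorphisms produces a class $\bar x\in H^4_{\et}(X_{\oF},\Z/2)$ with $\Sq^3(\bar x)\neq 0$. Lifting $\bar x$: the class $y$ is $2$-torsion and integral, so it is the reduction of a class in $H^4_{\et}(X_{\oF},\Z_2(2))$ (the Bockstein of $\rho(y)$ vanishes because $y$ lifts integrally), and I may twist appropriately so that $\bar x=\rho(x)$ for some $x\in H^4_{\et}(X_{\oF},\Z_2(2))$.

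Next I check that $x$ is a Tate class. Since $X$ and the model are defined over a finite subfield, the class $x$ (or a Galois conjugate / a multiple of it, which does not affect the argument since we only need one non-algebraic Tate class) is fixed by an open subgroup of the Galois group, hence is a Tate class in the sense of \S\ref{conventions}. Then I argue that $x$ is not algebraic: if it were, its reduction $\rho(x)=\bar x$ would be algebraic, so by Corollary~\ref{basicobstr} (applicable since $\oF$ contains a primitive $4$-th root of unity, as $p$ is odd) it would be killed by the odd degree Steenrod operation $\Sq^3$, contradicting $\Sq^3(\bar x)\neq 0$. Therefore the $2$-adic integral Tate conjecture for codimension $2$ cycles on $X$ fails.

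The main obstacle I anticipate is the bookkeeping around the Tate twists and the integral (rather than just mod $2$) lift: one must ensure that the class transported from $H^4(Y(\C),\Z)[2]$ genuinely lifts to $H^4_{\et}(X_{\oF},\Z_2(2))$ and is Galois-invariant over a finite field, so that it qualifies as a Tate class to which the conjecture applies — this requires invoking smooth proper base change with $\Z_2$-coefficients and checking that the torsion class survives, which is where the hypothesis that $Y$ is defined over a number field with good reduction away from $2$ (Proposition~\ref{prop5}(iii)) does its work. The rest is a routine combination of smooth base change, Artin comparison, $\cA_2$-equivariance of these comparisons, and Corollary~\ref{basicobstr}.
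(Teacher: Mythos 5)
Your proposal is correct and follows essentially the same route as the paper: reduce $Y$ modulo a place above $p$, transport the $2$-torsion class $y$ through the smooth proper base change and Artin comparison isomorphisms (which are $\cA_2$-equivariant), note that the resulting class in $H^4_{\et}(X,\Z_2(2))$ is Tate because it is torsion, and conclude non-algebraicity from $\Sq^3\neq 0$ via the odd-degree Steenrod obstruction (the paper invokes Corollary~\ref{Tateobstr}~(i), which amounts to your use of Corollary~\ref{basicobstr}). The only point to phrase carefully is the one you flag yourself at the end: the lift $x$ should be taken to be the image of $y$ under the comparison with $\Z_2$-coefficients (so that $x$ is torsion, hence fixed by an open subgroup of Galois and therefore Tate), rather than an arbitrary lift of the mod~$2$ class produced by a Bockstein argument, since such a lift need not be Galois-invariant.
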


\begin{proof}
Let $Y$ and $y$ be as in Proposition \ref{prop5}. Let $X$ denote a smooth projective variety over~$\oF$ obtained by reduction of $Y$ modulo $p$. Artin's comparison theorem \cite[XI, Th\'eor\`eme 4.4]{SGA43}, the invariance of \'etale cohomology under extensions of algebraically closed fields
\cite[XVI, Corollaire 1.6]{SGA43}, and the smooth and proper base change theorems 
\cite[XVI, Corollaire 2.2]{SGA43} yield a commutative diagram
\begin{equation}
\label{red1}
\begin{aligned}
\xymatrix
@R=0.3cm
{
H^*_{\et}(X,\Z_2)\ar^{\sim\hspace{.6em}}[r]\ar^{\rho}[d]&
H^*(Y(\C),\Z_2)\ar^{\rho}[d]  \\
 H^*_{\et}(X,\Z/2)\ar^{\sim\hspace{.6em}}[r]&H^*(Y(\C),\Z/2)
}
\end{aligned}
\end{equation}
whose horizontal arrows are isomorphisms, and whose bottom row is $\cA_2$-equivariant (by functoriality of Steenrod operations and $\cA_2$-equivariance of Artin's comparison isomorphism, see \S\ref{Steenrodetale}). 
The class $x\in H^4_{\et}(X,\Z_2(2))[2]$ induced by~$y$ and a fixed isomorphism $\Z_2\isoto\Z_2(2)$ over $\oF$ is Tate since it is torsion. As $\Sq^3(\rho(x))\neq 0$ by Proposition \ref{prop5} (ii) and (\ref{red1}), the class $x$ is not algebraic by Corollary~\ref{Tateobstr}~(i).
\end{proof}

\begin{rem}
\label{rem1}
The same argument shows that the complex variety $Y$ of Proposition~\ref{prop5} is a $5$\nobreakdash-dimensional counterexample to the integral Hodge conjecture: the class $y$ is Hodge because it is torsion, and it is not algebraic by Corollary \ref{Hodgeobstr} (i).
\end{rem}

\subsection{Algebraic approximations of classifying spaces}
\label{parapprox}

Proposition \ref{approx} below is essentially due to Ekedahl \cite[Theorem 1.3 and~\S 1.1]{Ekedahl}. Simplifications to the proof were made by Pirutka and Yagita \mbox{\cite[\S 4]{PY}} with further additions by Antieau \cite[\S 3]{Antieau}. We give a proof for the convenience of the reader, and claim no originality.

Recall that one may let any Lie group $H$ act freely and properly on a contractible space $EH$. The resulting quotient $BH:=EH/H$ is the \textit{classifying space} of $H$. 

We also recall that a continuous map $f:Y\to X$ between topological spaces is an $N$-\textit{equivalence} if $f_*:\pi_q(Y,y)\to\pi_q(X,f(y))$ is an isomorphism for $q<N$ and a surjection for $q=N$, for all $y\in Y$.
By the Hurewicz theorem \cite[Theorem 4.37]{Hatcher} and the universal coefficient theorem,  the morphism $f^*:H^q(X,A)\to H^q(Y,A)$ is then an isomorphism for $q<N$ and an injection for $q=N$, for all abelian groups~$A$.

\begin{prop}
\label{approx}
Let $H$ be a connected
 reductive complex Lie group. 
Let $p$ be a prime number, and fix $N\geq 1$.
Then there exists a smooth projective complex variety $Y$ of dimension $N$ such that:
\begin{enumerate}[(i)]
\item There exists an $N$-equivalence $Y(\C)\to B(H\times \C^*)$.
\item The variety $Y$ is defined over $\Q$ and has good reduction at $p$.
\end{enumerate}
\end{prop}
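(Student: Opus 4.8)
The plan is to realize the classifying space $B(H \times \C^*)$, up to $N$-equivalence, by a projective variety obtained as a quotient of a suitable open subset of a linear representation. First I would choose a faithful complex representation $H \times \C^* \hookrightarrow \GL_M(\C)$ (which exists since $H \times \C^*$ is reductive), and then regard $G := H \times \C^*$ as acting on $V := \Hom(\C^m, \C^M) = \C^{mM}$ for $m$ large. Inside $V$ there is a $G$-invariant closed subset $W$ (the non-injective homomorphisms, or more precisely the locus where the $G$-action fails to be free/the quotient fails to be nice) of high codimension, growing linearly in $m$; choose $m$ so that $\mathrm{codim}_V(W) > N$. On the open complement $U := V \setminus W$ the group $G$ acts freely, and the total space $U$ is highly connected: $U$ is the complement of a subvariety of codimension $> N$ in affine space, so $U(\C)$ is $(2N-1)$-connected (its homotopy groups vanish through degree close to $2\,\mathrm{codim} - 2$). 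Hence $U(\C) \to \mathrm{pt}$ is highly connected, and the Borel construction shows $U(\C)/G \to BG = B(H\times\C^*)$ is an $N$-equivalence, since the fibers $U(\C)$ are $N$-connected.

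The next step is to make $U/G$ \emph{projective}. The free quotient $U/G$ is quasi-projective but typically not proper. Here one exploits the $\C^*$-factor: the $\C^*$ acting through the chosen representation can be arranged to act on $V$ with all weights equal (e.g. by scaling $\C^m$), so that $V \setminus \{0\} \to \P(V)$ realizes part of the quotient as a projective space; more systematically, one takes a $G$-linearized ample line bundle and forms a GIT quotient, or one follows Ekedahl's construction directly: replace $U/G$ by a compactification and then, since only the first $N$ cohomology groups matter, pass to a smooth projective variety $Y$ admitting an $N$-equivalence to $U/G$ (for instance a smooth complete intersection of ample divisors in a smooth projective compactification, or a resolution thereof, using the Lefschetz hyperplane theorem to preserve low-degree homotopy). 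Concretely, following \cite{PY} and \cite{Antieau}, one takes $V$ with a $G$-action so that $U/G$ is already a Zariski-open in a Grassmannian-type projective variety, intersects with $\dim = N$ many general hyperplane sections to cut down to dimension $N$ while keeping an $N$-equivalence by the Lefschetz theorem, and resolves singularities if necessary; this yields the smooth projective $Y$ of dimension $N$ with an $N$-equivalence $Y(\C) \to B(H\times\C^*)$, giving (i).

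For (ii), the point is that every choice made above is defined over $\Q$: the reductive group $H$, being connected reductive over $\C$, together with its representation $H \hookrightarrow \GL_M$, can be chosen to come from a split reductive group scheme over $\Z$ (or at least over $\Z[1/p]$ for the given $p$) — this uses the classification of split reductive group schemes over $\Z$. The representation $V$, the bad locus $W$, the linearization, the ample divisors, and the resolution of singularities can all be spread out over $\Z[1/p]$ (resolution in characteristic zero and generic smoothness handle the residue characteristic $p$ once one inverts finitely many other primes, which one avoids by choosing the hyperplane sections generically enough / by Bertini over a large enough finite field); shrinking the base if needed but always keeping $p$ invertible, one obtains a smooth projective model of $Y$ over $\Z[1/p]$, hence $Y$ is defined over $\Q$ with good reduction at $p$.

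The main obstacle I anticipate is the compactification step in (i): producing a \emph{smooth projective} variety of dimension exactly $N$ that still carries an $N$-equivalence to $B(H\times\C^*)$, rather than merely a quasi-projective free quotient. Controlling the codimension of the bad locus $W$ precisely enough (so that $U(\C)$ is $N$-connected) while simultaneously arranging that the compactified quotient is smooth and that hyperplane sections do not destroy the low-degree homotopy type requires care — this is exactly where the arguments of Ekedahl, Pirutka–Yagita, and Antieau do the real work, and where I would follow them closely rather than reinvent the construction.
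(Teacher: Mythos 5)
Your treatment of (i) follows the same route as the paper (Ekedahl, Pirutka--Yagita, Antieau): a representation of $\cH\times\bG_m$ with $\bG_m$ acting by homotheties, removal of a bad locus of codimension $>N$, the projective GIT quotient $\cZ=\P^{nd-1}/\!\!/\cH$ containing the free quotient $\cW$ as an open subset with small complement, an $N$-dimensional complete intersection inside $\cW$, and a Lefschetz theorem (Hamm's, in the quasi-projective setting) plus the Borel fibration comparison to get the $N$-equivalence. Two remarks there: no resolution of singularities is needed, and none can be allowed --- blowing up would introduce new degree-$2$ cohomology and destroy the $N$-equivalence; the correct move is to force the complete intersection to lie entirely in the smooth free locus $\cW$, which is possible precisely because the complement of $\cW$ in $\cZ$ has codimension $\geq N+1>\dim Y$.

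The genuine gap is in (ii). Your two suggestions for controlling the fibre at $p$ both fall short of the statement as written. Generic smoothness and spreading out over $\Q$ only give smoothness after inverting an uncontrolled finite set of primes, which may well contain the given $p$; and ``Bertini over a large enough finite field'' means passing to $\F_{p^r}$, after which lifting equations produces a model over (a localization of) the ring of integers of an extension of $\Q$, so the resulting variety is a priori not defined over $\Q$ --- and classical Bertini for hyperplane sections over the given field $\F_p$ can genuinely fail. The missing ingredient is Poonen's Bertini theorem over $\F_p$: it produces hypersurface sections of large degree, defined over $\F_p$ itself, cutting out a smooth $N$-dimensional complete intersection $\cY_{\F_p}\subset\cW_{\F_p}$; one then lifts the defining equations to $\Z_{(p)}$ to obtain a complete intersection $\cY\subset\cZ$ which is projective and smooth of relative dimension $N$ over $\Z_{(p)}$, since the non-smooth locus is closed, proper over $\Spec(\Z_{(p)})$, and misses the closed fibre. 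Its generic fibre is then a variety over $\Q$ with good reduction at $p$, as required. (For the reductive model itself, your appeal to group schemes over $\Z$ or $\Z_{(p)}$ is fine and matches the paper's use of SGA3.)
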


The following lemma is \cite[Lemma 1.1 (ii)-(iv)]{Ekedahl}.

\begin{lem}
\label{eke}
Let $V$ be a $d$-dimensional vector space over an algebraically closed field $K$. For $n\geq d$, let $U_n\subset V^{\oplus n}$ be the open subset of $n$-tuples that span $V$. 
\begin{enumerate}[(i)]
\item The complement of $U_n$ in $V^{\oplus n}$ has codimension $\geq n+1-d$.
\item The algebraic group $\GL(V)$ acts freely on $U_n$.
\item The image of $U_n$ in $\P(V^{\oplus n})$ consists of $\SL(V)$-stable points.
\end{enumerate}
\end{lem}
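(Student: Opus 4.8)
The three parts are independent, and I would prove them in sequence, the first two being routine. Part (ii) is immediate: if $g\in\GL(V)$ fixes every entry of a spanning tuple $(v_1,\dots,v_n)$, then $g$ is the identity on a spanning family of $V$, so $g=\Id_V$; thus the $\GL(V)$-stabilizer of every point of $U_n$ is trivial.

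For (i), I would describe the complement $Z_n:=V^{\oplus n}\setminus U_n$ — the set of tuples whose span is a proper subspace of $V$ — and bound its dimension by an incidence-variety argument. Let $G:=\Grass(d-1,V)\cong\P(V^\vee)$ be the variety of hyperplanes of $V$, with tautological subbundle $\mathcal{S}\subset V\otimes_K\cO_G$ of rank $d-1$, and let $\widetilde Z_n$ be the total space of $\mathcal{S}^{\oplus n}$, i.e. the set of pairs $(W,(v_i)_i)$ with $W$ a hyperplane and all $v_i\in W$. Then $\dim\widetilde Z_n=(d-1)+n(d-1)=(n+1)(d-1)$, and since the projection $\widetilde Z_n\to V^{\oplus n}$ maps onto $Z_n$ (any non-spanning tuple lies in some hyperplane) one gets $\dim Z_n\leq(n+1)(d-1)$, whence $\mathrm{codim}(Z_n)\geq nd-(n+1)(d-1)=n+1-d$. (Equivalently, in coordinates $Z_n$ is the determinantal variety of $n\times d$ matrices of rank $\leq d-1$, of codimension exactly $n+1-d$.)

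For (iii), fix a spanning tuple $\hat x=(v_1,\dots,v_n)$ and let $x=[\hat x]\in\P(V^{\oplus n})$, equipped with the linearization of the $\SL(V)$-action induced by its linear action on $V^{\oplus n}$. I would check two things. First, the stabilizer is finite: if $g\in\SL(V)$ preserves the line $K\hat x$, then $g v_i=\chi v_i$ for all $i$ and a common $\chi\in K^*$, so $g=\chi\Id_V$ with $\chi^d=\det(g)=1$, and $\mathrm{Stab}_{\SL(V)}(x)=\bmu_d$. Second, I would verify the Hilbert--Mumford criterion for \emph{stable} points: given a nontrivial one-parameter subgroup $\lambda:\G_m\to\SL(V)$, pick a basis $f_1,\dots,f_d$ of $V$ diagonalizing it with weights $a_1\geq\cdots\geq a_d$, so $\sum_j a_j=0$ and, since $\lambda\neq 1$, $a_d<0$; because the $v_i$ span $V$, each $f_j$ occurs with nonzero coefficient in some $v_i$, hence every $a_j$ occurs among the $\lambda$-weights of $\hat x$ in $V^{\oplus n}$, the minimal such weight is $a_d$, and $\mu(x,\lambda)=-a_d>0$. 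Hence $x$ is stable. (Alternatively one can argue directly: writing $\hat x$ as a surjection $A\colon K^n\twoheadrightarrow V$ and splitting $\ker A$ off $K^n$, the orbit $\SL(V)\cdot\hat x$ is cut out in $V^{\oplus n}$ by the vanishing of the $\ker A$-component together with the closed condition $\det=1$ on the complementary $d\times d$ block, so it is closed; combined with finiteness of the stabilizer this again gives stability.)

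The only delicate point is in (iii): one must pin down the GIT conventions — the linearization, the sign in $\mu(x,\lambda)$, and the exact form of the numerical criterion distinguishing stable from merely semistable points — and observe that the degenerate cases cause no trouble (here a nontrivial $\lambda$ automatically does not fix $x$, and the kernel $\bmu_d$ of $\SL(V)\to\mathrm{PGL}(V^{\oplus n})$ is finite). Once the framework is in place the computation is one line; parts (i) and (ii) present no difficulty.
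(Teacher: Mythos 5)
Your proof is correct. Note that the paper does not prove this lemma at all: it is quoted verbatim from Ekedahl (Lemma 1.1 (ii)--(iv) of the cited paper), so the only comparison to make is with the standard arguments, which are exactly the ones you give: the incidence-variety (equivalently determinantal) dimension count for (i), triviality of stabilizers for (ii), and the Hilbert--Mumford criterion (or, alternatively, closedness of the $\SL(V)$-orbit of a lift together with finiteness of its stabilizer) for (iii). Two small points are worth making explicit. In (ii), ``free'' should be understood scheme-theoretically, but your linear-algebra argument does give this: for a spanning tuple $(v_1,\dots,v_n)$ the stabilizer is cut out in $\GL(V)$ by the linear equations $gv_i=v_i$, whose solution is the single reduced point $\Id$, so the stabilizer group scheme is trivial (and the same computation with $R$-points, for any $K$-algebra $R$, shows freeness of the action functorially). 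In (iii), your weight computation is the standard one for the linearization of $\cO(1)$ coming from the linear $\SL(V)$-action on $V^{\oplus n}$: for a nontrivial one-parameter subgroup with weights $a_1\geq\cdots\geq a_d$ summing to $0$, the spanning condition forces every weight, in particular $a_d<0$, to occur in the lift, so $\mu(x,\lambda)=-a_d>0$ and $x$ is (properly) stable; your alternative argument via the closed orbit $\{B:\ B|_{\ker A}=0,\ \det\bigl(B|_{C}\circ (A|_C)^{-1}\bigr)=1\}$ and the finite stabilizer $\bmu_d$ (finite even when $\mathrm{char}\,K$ divides $d$) is equally valid and has the advantage of avoiding any convention-checking in the numerical criterion.
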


\begin{proof}[Proof of Proposition \ref{approx}]
Let $\cH$ be a reductive group scheme over $\Z_{(p)}$ (in the sense of \cite[XIX, D\'efinition 2.7]{SGA33r}) with $\cH(\C)\simeq H$ (see \cite[XXV, Corollaire~1.3]{SGA33r}). By \cite[VI$_B$, Proposition 13.2]{SGA31r}, there exists a closed embedding of group schemes $\cH\hookrightarrow \GL_{d-2,\Z_{(p)}}$ for some $d\geq2$. Composing it with
\begin{equation*}
M\mapsto
\label{matrix}
\bigg(\begin{smallmatrix} 
M & 0 & 0\\
0 & \det(M)^{-1}  &0 \\ 
 0&0 & 1
\end{smallmatrix}\bigg)
\end{equation*}
yields embeddings $\cH\hookrightarrow \SL_{d,\Z_{(p)}}$ and $\cH\times\bG_m \hookrightarrow \GL_{d,\Z_{(p)}}$ (where~$\bG_m $ is embedded by homotheties). Fix $n\gg 0$. Let $\cH\times\bG_m$ act linearly on $\A^{nd}_{\Z_{(p)}}=(\A^{d}_{\Z_{(p)}})^n$, in a diagonal way using $n$ times the above representation. 
By Lemma \ref{eke}, there exists an open subset $\cU\subset\A^{nd}_{\Z_{(p)}}$ on which $\cH\times\bG_m$ acts freely, whose image $\cV\subset\P^{nd-1}_{\Z_{(p)}}$ consists only of $\cH$-stable points, and such that the complement of $\cU$ in $\A^{nd}_{\Z_{(p)}}$ has codimension $\geq N+1$ in the fibers of $\A^{nd}_{\Z_{(p)}}\to\Spec(\Z_{(p)})$ (as $n\gg 0$). 

Let $\cZ:=\P^{nd-1}_{\Z_{(p)}}/\!\!/\cH$ be the GIT quotient (see \cite[Theorem 4]{Seshadri}). The complement of the smooth open subset $\cW:=\cV/\cH\subset \cZ$ has codimension $\geq N+1$ in the fibers of $\cZ\to\Spec(\Z_{(p)})$. Choosing a projective embedding of the projective $\Z_{(p)}$\nobreakdash-scheme~$\cZ$, one can use Poonen's Bertini theorem \cite[Theorem 1.1]{Poonen} to find an $N$\nobreakdash-dimensional complete intersection $\cY_{\F_p}$ of $\cZ_{\F_p}$ which is smooth and included in $\cW_{\F_p}$. Lifting over $\Z_{(p)}$ the equations of the projective hypersurfaces defining $\cY_{\F_p}$ in~$\cZ_{\F_p}$ yields a complete intersection $\cY$ of $\cZ$ which is smooth of relative dimension~$N$ over $\Z_{(p)}$, and included in $\cW$. 

Define $Y:=\cY_{\C}$, $V:=\cV_{\C}$ and $U:=\cU_{\C}$.
The variety $Y$ satisfies (ii) by construction. Applying Hamm's Lefschetz theorem \cite[Theorem 2 and the remark below]{Hamm} several times shows that the natural map $Y(\C)\to V(\C)/H=U(\C)/(H\times \C^*)$ is an $N$-equivalence. In the diagram of fibrations
$$U(\C)/(H\times \C^*)\leftarrow(U(\C)\times E(H\times \C^*)/(H\times \C^*)\to B(H\times \C^*),$$
the left-hand side arrow is a weak equivalence because its fibres are isomorphic to $E(H\times \C^*)$ hence contractible, and the right-hand side arrow is an $N$-equivalence as its fibres are isomorphic to $U(\C)$ hence are $(N-1)$-connected because the complement of $U(\C)$ in $\C^{nd}$ has high codimension when $n\gg 0$. 
Assertion (i) follows.
\end{proof}

When we apply Proposition \ref{approx} to the simple simply connected 
complex Lie group $E_8$ in \S\ref{partf}, we will need the following lemma.

\begin{lem}
\label{exceptional}
Fix $\ell\in\{2,3,5\}$. Then the following assertions hold.
\begin{enumerate}[(i)]
\item There exists a $16$-equivalence $f:BE_8\to K(\Z,4)$.
\item The group $H^q(BE_8,\Z)$ has no $\ell$-torsion for $q\leq 2\ell+2$.
\item There exists $y\in H^4(BE_8,\Z)$ such that the classes $\Sq^3(\rho(y))$ (if $\ell=2$) and $\beta P^1(\rho(y))$ (if~$\ell$ is odd) are non-zero.
\end{enumerate}
\end{lem}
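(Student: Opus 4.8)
The plan is to analyze the homotopy type of $BE_8$ through a range of degrees. The key input is the classical fact that $E_8$ is $2$-connected with $\pi_3(E_8)\cong\Z$, so $BE_8$ is $3$-connected with $\pi_4(BE_8)\cong\Z$. Hence the first nontrivial Postnikov map $f:BE_8\to K(\Z,4)$ exists, and its fiber is $3$-connected with first homotopy group $\pi_5(BE_8)$. To get assertion (i), I would invoke the classical computation (going back to Bott and to Mimura--Toda) that the homotopy groups $\pi_q(E_8)$ vanish for $q$ in the relevant range beyond $3$ (indeed $\pi_4(E_8)=\dots=\pi_{14}(E_8)=0$ with the next nonzero one appearing around degree $15$, giving $\pi_q(BE_8)=0$ for $4<q\leq 15$), so that $f$ induces an isomorphism on $\pi_q$ for $q<16$ and a surjection for $q=16$; that is exactly a $16$-equivalence. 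Equivalently one can cite the well-known statement that $BE_8$ and $K(\Z,4)$ agree through dimension $15$, which is precisely how the algebraic approximation $BE_8$ is used as a model for $K(\Z,4)$.

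For assertion (ii), I would use the $16$-equivalence of (i) to reduce, in degrees $q\leq 2\ell+2\leq 12 < 16$, to computing the integral cohomology of $K(\Z,4)$ in those degrees. The cohomology $H^*(K(\Z,4),\Z)$ is classically known: in low degrees it is generated by the fundamental class $\iota_4$ and has torsion only starting in degree depending on the prime — the first $p$-torsion in $H^*(K(\Z,4),\Z)$ occurs in degree $2p+3$ (coming from $\beta P^1\iota_4$ for $p$ odd, and from $\Sq^3\iota_4$, i.e. degree $7$, for $p=2$... wait — for $\ell=2$ we need $q\leq 6$). So I must be careful: for $\ell=2$ the bound is $q\leq 6$, and $H^q(K(\Z,4),\Z)$ has no $2$-torsion for $q\leq 6$ since the first $2$-torsion class is $\Sq^2\iota_4$ in degree $6$ which is... actually $\Sq^2\iota_4\in H^6$ is a nonzero mod $2$ class, but the relevant integral torsion comes from $\Sq^3\iota_4\in H^7$; one checks $H^6(K(\Z,4),\Z)=0$. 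For odd $\ell$, the first $\ell$-torsion appears in degree $2\ell+3$, strictly above $2\ell+2$. I would assemble these standard facts (e.g. from Cartan's computation of $H^*(K(\Z,n))$ or from tables in the literature) to conclude (ii).

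For assertion (iii), take $y\in H^4(BE_8,\Z)$ to be (the pullback under $f$ of) the fundamental class $\iota_4$; equivalently $y$ is a generator of $H^4(BE_8,\Z)\cong\Z$. Then $\rho(y)\in H^4(BE_8,\Z/\ell)$ corresponds to $\iota_4$ mod $\ell$, and I must check that $\Sq^3(\rho(y))\neq 0$ when $\ell=2$ and $\beta P^1(\rho(y))\neq 0$ when $\ell$ is odd. Since $f$ is a $16$-equivalence and both target classes live in degree $\leq 2\ell+3\leq 13<16$, it suffices to verify non-vanishing in $H^*(K(\Z,4),\Z/\ell)$. There, $\Sq^3\iota_4\neq 0$ in $H^7(K(\Z,4),\Z/2)$ and $\beta P^1\iota_4\neq 0$ in $H^{2\ell+3}(K(\Z,4),\Z/\ell)$ are classical non-vanishing statements about Eilenberg--MacLane spaces — these are precisely the generators of the first torsion in integral cohomology — and can be quoted from Cartan's computation or from the structure of the mod $\ell$ cohomology of $K(\Z,4)$ as a free module over the Steenrod algebra modulo the appropriate instability and Bockstein relations.

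The main obstacle is assertion (ii): it requires knowing the integral (not just mod $\ell$) cohomology of $BE_8$, equivalently of $K(\Z,4)$, is torsion-free through the stated range, and the torsion in $H^*(K(\Z,4),\Z)$ is delicate — one must correctly identify the degree of the first $\ell$-torsion class for each of $\ell\in\{2,3,5\}$ and confirm it exceeds $2\ell+2$. This hinges on Cartan's explicit computation of $H^*(K(\Z,n),\Z)$, and the fact that for $n=4$ the first $p$-torsion sits in degree $2p+3$; the $\ell=2$ case (where one only has room up to degree $6$) is the tightest and must be checked with care, using that $H^5(K(\Z,4),\Z)=0$ and $H^6(K(\Z,4),\Z)=0$. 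Once these classical facts are in hand, (i) and (iii) are immediate consequences of the $16$-equivalence together with standard non-vanishing of Steenrod operations on the fundamental class of $K(\Z,4)$.
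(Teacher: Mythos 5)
Your proposal is correct and follows essentially the same route as the paper: (i) from Bott and Bott--Samelson's vanishing of $\pi_q(E_8)$ for $4\le q\le 14$, (ii) from Cartan's computation of the (co)homology of $K(\Z,4)$ transported through the $16$-equivalence, and (iii) from non-vanishing of Steenrod operations in Eilenberg--MacLane spaces. The only cosmetic difference is in (iii): you quote the Serre--Cartan description of $H^*(K(\Z,4),\Z/\ell)$ directly to get $\Sq^3(\rho(\iota_4))\neq 0$ and $\beta P^1(\rho(\iota_4))\neq 0$, whereas the paper uses the universal property of $K(\Z,4)$ to reduce to the space $K(\Z/\ell,3)$ and the non-vanishing of $\Sq^3\Sq^1$ and $\beta P^1\beta$ on degree-$3$ mod $\ell$ classes; both are standard and equivalent in substance.
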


\begin{proof}
As $\pi_1(E_8)=0$ and $E_8$ is simple, Bott \cite[Theorem~A and top of p.~253]{Bott} has shown that $\pi_2(E_8)=0$
and $\pi_3(E_8)=\Z$. 
Moreover, Bott and Samelson \cite[Theorem~V~p.~995]{BS} have proven that $\pi_q(E_8)=0$ for $4\leq q\leq 14$. 
It follows that $\pi_q(BE_8)=0$ for $1\leq q\leq 3$ and $5\leq q\leq 15$ and that $\pi_4(BE_8)=\Z$. This proves (i).

Assertion (ii) follows from (i) and from the computation of $H_*(K(\Z,4),\Z)$ given in \cite[\S 4]{Cartan}.
We now turn to (iii). In view of (i), it suffices to construct such a class $y$ in the cohomology of $K(\Z,4)$, and by the universal property of $K(\Z,4)$, it suffices to construct it in the cohomology of a space of our choice. The space $K(\Z/\ell,3)$ works because the Steenrod operations $\Sq^3\Sq^1$ (when $\ell=2$) and $\beta P^1\beta$ (when $\ell$ is odd) are non-zero in general on degree $3$ mod $\ell$ cohomology classes.
\end{proof}

In \S\ref{cexfinite}, we will use the following variant of Proposition \ref{approx}.

\begin{prop}
\label{approxfinite}
Let $p$ be a prime number.
Let $\cH$ be a finite \'etale group scheme over $\Z_{(p)}$. Fix $N\geq 1$. Then there exist a smooth projective scheme $\cY$ of relative dimension $N$ over~$\Z_{(p)}$ and a line bundle $\cL$ over $\cY$ with the following properties.
\begin{enumerate}[(i)]
\item There exists an $N$-equivalence $\cY(\C)\to B(\cH(\C)\times\C^*)$.
\item The topological complex line bundle $\cL(\C)\to \cY(\C)$ is classified by the map $\cY(\C)\to B\C^*$ appearing in (i). 
\item If $N\geq 2$ and $\ell\neq p$ is a prime number, there is a $\Gal(\oF/\F_p)$-equivariant map
\begin{equation}
\label{GaloisH1}
\Hom(\cH(\oF),\Z/\ell)=H^1(\cH(\oF),\Z/\ell)\isoto H^1_{\et}(\cY_{\oF},\Z/\ell).
\end{equation}
\end{enumerate}
\end{prop}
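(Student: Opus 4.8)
The plan is to mimic the proof of Proposition \ref{approx}, carrying along a line bundle and, since $\cH$ is finite \'etale rather than connected reductive, working with an appropriate faithful representation over $\Z_{(p)}$. First I would embed $\cH$ into some $\GL_{d-2,\Z_{(p)}}$: because $\cH$ is finite flat (hence affine) over $\Z_{(p)}$, its regular representation gives a faithful action on a free module of finite rank, so such a closed embedding exists. Composing with $M\mapsto\bigl(\begin{smallmatrix}M&0&0\\0&\det(M)^{-1}&0\\0&0&1\end{smallmatrix}\bigr)$ as in the proof of Proposition \ref{approx} yields embeddings $\cH\hookrightarrow\SL_{d,\Z_{(p)}}$ and $\cH\times\bG_m\hookrightarrow\GL_{d,\Z_{(p)}}$, with $\bG_m$ acting by homotheties. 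Then I would let $\cH\times\bG_m$ act diagonally ($n$ copies, $n\gg0$) on $\A^{nd}_{\Z_{(p)}}$, invoke Lemma \ref{eke} to get the open $\cU\subset\A^{nd}_{\Z_{(p)}}$ on which $\cH\times\bG_m$ acts freely and whose image $\cV\subset\P^{nd-1}_{\Z_{(p)}}$ consists of $\cH$-stable points with complement of codimension $\geq N+1$ in the fibers over $\Spec(\Z_{(p)})$, form the GIT quotient $\cZ:=\P^{nd-1}_{\Z_{(p)}}/\!\!/\cH$ (here $\cH$ is finite, so the quotient is finite over $\P^{nd-1}_{\Z_{(p)}}$, hence projective, and $\cW:=\cV/\cH\subset\cZ$ is smooth with complement of codimension $\geq N+1$ in the fibers), and finally apply Poonen's Bertini theorem to cut down to a smooth complete intersection $\cY\subset\cW$ of relative dimension $N$ over $\Z_{(p)}$, exactly as before. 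The line bundle $\cL$ is the restriction to $\cY$ of the line bundle on $\cW$ descending from $\cO(-1)|_{\cV}$ (or $\cO(1)$, according to sign conventions) under the quotient by $\cH$; equivalently, it is associated with the $\bG_m$-torsor $\cU\to\cV\to\cW$, so its complex realization is classified by the composite to $B\C^*$, giving (ii).

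For (i), the argument is identical to the connected case: with $Y:=\cY_\C$, $V:=\cV_\C$, $U:=\cU_\C$, repeated application of Hamm's Lefschetz theorem shows $Y(\C)\to V(\C)/\cH(\C)=U(\C)/(\cH(\C)\times\C^*)$ is an $N$-equivalence (the codimension-$(\geq N+1)$ hypothesis is what is used, and since $\cH(\C)$ is finite the free quotient $V(\C)\to V(\C)/\cH(\C)$ is a covering so causes no trouble), and then the diagram of fibrations
$$U(\C)/(\cH(\C)\times\C^*)\leftarrow\bigl(U(\C)\times E(\cH(\C)\times\C^*)\bigr)/(\cH(\C)\times\C^*)\to B(\cH(\C)\times\C^*)$$
has a left weak equivalence and a right $N$-equivalence (the fibres $U(\C)$ are $(N-1)$-connected because the complement has high codimension), so (i) follows. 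For (ii), one checks that the $\C^*$-torsor classifying $\cL(\C)$ pulls back from the $\C^*$-factor of $B(\cH(\C)\times\C^*)=B\cH(\C)\times B\C^*$, which holds by construction since $\cL$ came from the homothety action.

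For (iii), the key point is that when $N\geq2$ we have $\cY(\C)\to B\cH(\C)$ a $2$-equivalence (forgetting the $B\C^*$ factor), hence an isomorphism on $H^1(-,\Z/\ell)$; moreover $\cY_{\oF}$ is obtained by reduction mod $p$, and by smooth and proper base change $H^1_{\et}(\cY_{\oF},\Z/\ell)$ is identified $\Gal(\oF/\F_p)$-equivariantly with the corresponding group for a characteristic-zero model, and then via Artin comparison with $H^1(\cY(\C),\Z/\ell)=H^1(B\cH(\C),\Z/\ell)=\Hom(\cH(\C),\Z/\ell)$. The subtlety is the Galois action: one must arrange that this chain of isomorphisms is compatible with the natural $\Gal(\oF/\F_p)$-action on $\Hom(\cH(\oF),\Z/\ell)$ coming from the $\Z_{(p)}$-structure on $\cH$. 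I expect \textbf{this last point — the Galois-equivariance in (iii) — to be the main obstacle.} The resolution is to build the identification functorially on the level of the $\Z_{(p)}$-scheme: the $\cH$-torsor $\cV\to\cW$ restricts to an $\cH$-torsor $\cV\times_\cW\cY\to\cY$ over $\Z_{(p)}$, classified by a map $\cY\to B\cH$ in a suitable sense, and pulling back the tautological class of $H^1_{\et}(B\cH,\Z/\ell)=\Hom(\cH,\Z/\ell)$ (which over $\oF$ is $\Hom(\cH(\oF),\Z/\ell)$ with its evident Galois action) yields the map (\ref{GaloisH1}); it is an isomorphism because base-changing to $\C$ recovers the topological $2$-equivalence above, and the comparison and base-change isomorphisms are all functorial in the $\Z_{(p)}$-scheme and hence Galois-equivariant on $\cY_{\oF}$. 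Concretely, one checks that $H^1_{\et}((\cV\times_\cW\cY)_{\oF},\Z/\ell)$ vs.\ $H^1_{\et}(\cY_{\oF},\Z/\ell)$ fits in a Hochschild--Serre sequence for the finite \'etale $\cH_{\oF}$-torsor, whose low-degree terms give exactly $0\to\Hom(\cH(\oF),\Z/\ell)\to H^1_{\et}(\cY_{\oF},\Z/\ell)\to H^1_{\et}((\cV\times_\cW\cY)_{\oF},\Z/\ell)$, and the third term vanishes in the relevant range by the $N$-equivalence (for $N\geq2$), giving the isomorphism; every map here is visibly $\Gal(\oF/\F_p)$-equivariant.
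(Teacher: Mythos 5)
Your proposal is correct and follows essentially the same route as the paper: the same construction of $\cY$ (with the GIT quotient by the finite group scheme $\cH$, for which the paper cites Raynaud), the same definition of $\cL$ by descending $\cO(1)$ along the $\cH$-torsor $\cV\times_\cW\cY\to\cY$, and the same Hochschild--Serre argument for the canonical, hence Galois-equivariant, isomorphism in (iii). The only small divergence is in the vanishing of $H^1_{\et}$ of the covering torsor: the paper deduces it purely algebraically from the simple connectedness of complete intersections of dimension $\geq 2$ (SGA2, XII, Corollaire 3.5), whereas you deduce it from the complex $N$-equivalence via Artin comparison and smooth proper base change, which also works since the torsor is smooth and proper over $\Z_{(p)}$.
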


\begin{proof}
Construct $\cY$ exactly as in the proof of Proposition \ref{approx}.
 (As the reference \cite{Seshadri} assumes that $\cH$ has connected geometric fibers, one has to replace it with \cite[Th\'eor\`eme~1~iv)]{Raynaudquot}.) Assertion (i) is verified there.

By construction, there exists an $\cH$-torsor $\wcY\to\cY$ whose total space is a relative complete intersection in $\P^{m}_{\Z_{(p)}}$ for some $m$. The tautological bundle $\cO_{\wcY}(1)$ is naturally $\cH$-linearized, and hence descends to a line bundle $\cL$ on $\cY$. Inspecting the proof of Proposition \ref{approx} shows that (ii) holds.

As $\wcY_{\oF}$ is simply connected by \cite[XII, Corollaire~3.5]{SGA2}, the Hochschild--Serre spectral sequence for $\wcY_{\oF}\to\cY_{\oF}$ yields the isomorphism (\ref{GaloisH1}). It is canonical, hence $\Gal(\oF/\F_p)$-equivariant.
\end{proof}

\subsection{A\hspace{-.05em} torsion-free\hspace{-.05em} counterexample\hspace{-.05em} to\hspace{-.05em} the\hspace{-.05em} integral\hspace{-.05em} Tate\hspace{-.05em} conjecture\hspace{-.05em} \texorpdfstring{over~$\oF$}{}}
\label{partf}

  The proof of Proposition \ref{proptf} below is to be compared with \cite{Schoen}. In this article, Schoen overcomes the difficulty of constructing hyperplane sections of smooth projective varieties over $\oF$ whose vanishing cohomology contains interesting Tate classes. In contrast, the main ingredient of the proof of Proposition \ref{proptf} is the construction a hyperplane section whose vanishing cohomology contains no non-zero Tate classes.
  
\begin{prop}
\label{proptf}
Choose $\ell\in\{2,3,5\}$ and let $p$ be a prime number distinct from~$\ell$.
There exist a smooth projective variety $X$ of dimension $2\ell+2$ over $\oF$ and a class $x\in H^4_{\et}(X,\Z_{\ell}(2))$ such that the following assertions hold.
\begin{enumerate}[(i)]
\item The ring $H^*_{\et}(X,\Z_{\ell})$ is torsion-free.
\item A multiple of the class $x$ is algebraic.
\item Define $\alpha:=\Sq^2$ if $\ell=2$ and $\alpha:= P^1$ if $\ell$ is odd.
Then the class $\alpha(\rho(x))$
is not the reduction modulo $\ell$ of an integral Tate class.
\end{enumerate}
\end{prop}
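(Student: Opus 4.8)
The plan is to build $X$ as a well-chosen hypersurface section of (a blow-up of) the Pirutka--Yagita variety, and to control its cohomology explicitly. First I would apply Proposition~\ref{approx} to the simple simply connected complex Lie group $E_8$, for a large $N$ (say $N=2\ell+3$), getting a smooth projective variety $W$ of dimension $N$ defined over $\Q$ with good reduction at $p$, together with an $N$-equivalence $W(\C)\to B(E_8\times\C^*)$; via Lemma~\ref{exceptional}(i) the relevant part of $H^*(BE_8,\Z)$ is that of $K(\Z,4)$. One then picks $y\in H^4(W(\C),\Z)$ pulling back the generator, with $\alpha(\rho(y))\neq 0$ by Lemma~\ref{exceptional}(iii), and by Lemma~\ref{exceptional}(ii) the cohomology of $W$ is $\ell$-torsion-free in the relevant range. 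The Atiyah--Hirzebruch/Pirutka--Yagita argument produces, in $H^4$ of this $(2\ell+3)$-dimensional variety, a non-algebraic Tate class; the issue is that its cohomology ring in general carries $\ell$-torsion in higher degree (that torsion is precisely what makes the odd-degree Steenrod obstruction bite there). The new idea is to cut down by a hypersurface to kill this torsion while retaining a class on which the \emph{even}-degree obstruction of Corollary~\ref{Tateobstr}(ii) still applies.

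Concretely, I would take $\cL$ the line bundle corresponding to the $\C^*$-factor (as in Proposition~\ref{approxfinite}(ii), but here in the connected-group setting one simply uses the tautological $\cO(1)$ from the projective embedding used to build $W$), and let $X\subset W$ be a smooth member of $|\cL^{\otimes m}|$ for $m\gg 0$, chosen (using Poonen's Bertini theorem over $\Z_{(p)}$, as in the proof of Proposition~\ref{approx}) to be smooth in mixed characteristic away from $p$; then reduce mod $p$ to get $X$ over $\oF$. By the Lefschetz hyperplane theorem the restriction $H^q_{\et}(W_{\oF},\Z_\ell)\to H^q_{\et}(X_{\oF},\Z_\ell)$ is an isomorphism for $q<\dim X=2\ell+2$ and injective for $q=\dim X$; in particular it is an isomorphism for $q\le 2\ell+2$ — wait, one must be slightly careful: it is an isomorphism for $q\le 2\ell+1$ and injective for $q=2\ell+2$. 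Either way, in the degrees $q\le 2\ell+2$ we need, $H^*_{\et}(X_{\oF},\Z_\ell)$ injects into (indeed, below the middle dimension equals) $H^*_{\et}(W_{\oF},\Z_\ell)$, which is $\ell$-torsion-free by Lemma~\ref{exceptional}(ii); combined with Poincaré duality on $X$ this forces the whole ring $H^*_{\et}(X,\Z_\ell)$ to be torsion-free, giving (i). This is the same mechanism Schoen exploits, run in reverse: instead of a hyperplane section whose vanishing cohomology \emph{contains} Tate classes, we want one whose vanishing cohomology contains \emph{no} torsion (and, for the argument of the companion Proposition, no nonzero Tate classes in the vanishing part).

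For (ii), let $x\in H^4_{\et}(X,\Z_\ell(2))$ be the restriction of the class $\bar y\in H^4_{\et}(W_{\oF},\Z_\ell(2))$ obtained by specializing $y$ (using Artin comparison and smooth--proper base change, exactly as in the proof of Theorem~\ref{th1b}, plus a fixed isomorphism $\Z_\ell\simeq\Z_\ell(2)$ over $\oF$). The class $\bar y$ is a pullback from the algebraic approximation $W$ of $B(E_8\times\C^*)$, and a standard consequence of the Pirutka--Yagita analysis (cf.\ \cite{PY}) is that a nonzero multiple of the degree-$4$ generator there is algebraic — it is hit by a Chern class of a representation of $E_8$ pushed through the approximation; restricting to $X$ a multiple of $x$ is therefore algebraic, giving (ii). For (iii) I would argue by contradiction: suppose $\alpha(\rho(x))$ is the reduction of an integral Tate class on $X$. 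Pushing forward (or rather: using Lefschetz again) one compares this with $\alpha(\rho(\bar y))=$ restriction of $\alpha(\rho(y))$, which is nonzero in $H^*_{\et}(X_{\oF},\Z/\ell)$ because the restriction map is injective in that degree and $\alpha(\rho(y))\neq 0$ on $W$ by Lemma~\ref{exceptional}(iii) — here one checks $2+\deg\alpha/2\le 2\ell+2$, i.e.\ the class lives in a degree still covered by the Lefschetz isomorphism. The point is then that on $W$, $\alpha(\rho(y))$ is \emph{not} the reduction of any integral Tate class: this is where the Atiyah--Hirzebruch-type computation of \cite{PY} in the cohomology of $BE_8$ enters, showing $\alpha$ applied to the generator lands in a torsion class (in the cohomology of $W$) that is not in the image of integral cohomology, or (after our truncation, where that torsion has been killed) is nonzero but provably not reduction of a Tate class because the corresponding integral class would have to be algebraic by the rational Tate conjecture consequences and a numerical/weight incompatibility rules it out.

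The main obstacle, and the part requiring real work, is precisely the last step: verifying that $\alpha(\rho(x))$ is not the reduction of an integral Tate class \emph{on the truncated variety $X$}, where — unlike on $W$ or on the full $(2\ell+3)$-dimensional Pirutka--Yagita variety — we have deliberately arranged the cohomology to be torsion-free, so the classical Atiyah--Hirzebruch obstruction (non-vanishing of an odd-degree operation on a torsion class) is unavailable by design. One must instead run the finer obstruction of Corollary~\ref{Tateobstr}(ii) / Proposition~\ref{weirdobstr}: show that if $\alpha(\rho(x))$ lifted to an integral Tate class, that integral class would itself be algebraic (using the truth of the rational Tate conjecture in this range for this explicit variety, which over $\oF$ reduces via Schoen-type arguments to lower-dimensional cases), and then derive a contradiction with the structure of the cycle class map on $X$ coming from $B(E_8\times\C^*)$ — i.e.\ that $\alpha$ of an algebraic class would have to come from the very restricted supply of algebraic classes on the approximation, which does not contain a class mapping to $\alpha(\rho(x))$. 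Making this incompatibility precise — identifying the relevant piece of $H^{4+\deg\alpha}_{\et}(X,\Z_\ell)$ with a subquotient of $H^*(BE_8)$ and tracking which classes there are restrictions of algebraic (equivalently, of integral Tate) classes — is the technical heart of the proof, and is exactly the new input beyond \cite{PY} and beyond Theorem~\ref{th1b}.
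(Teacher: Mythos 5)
Your overall architecture matches the paper's: an algebraic approximation of $B(E_8\times\C^*)$ of dimension $2\ell+3$ with good reduction at $p$, a smooth member of a very ample linear system on its reduction as the variety $X$, weak Lefschetz plus Lemma~\ref{exceptional}(ii) (and a duality/counting argument above the middle degree) for torsion-freeness, and Edidin--Graham/Totaro surjectivity of $\CH^2\otimes\Q\to H^4$ of the classifying space for (ii). But part (iii), which you yourself flag as ``the technical heart,'' is a genuine gap, and the route you sketch for closing it would not work. You cannot take $X$ to be an arbitrary smooth member produced by Poonen's Bertini theorem: over $\oF$ every hypersurface is defined over a finite field, so there is no ``very general'' member, and nothing in your construction prevents the vanishing cohomology $H^{2\ell+2}_{\et}(X,\Q_\ell(\ell+1))_{\van}$ from containing Tate classes. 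The paper's key input here is a Lefschetz pencil argument: Schoen's result that the geometric monodromy on the vanishing cohomology is infinite, Deligne's Weil~II theorem making its image $\ell$-adically open in the orthogonal group, and a \v{C}ebotarev argument selecting a closed point $u$ of the smooth locus whose Frobenius has no root-of-unity eigenvalues on the vanishing part (this also explains the paper's blow-up of a point when $\ell=3$: the eigenvalue argument needs the vanishing space to be even-dimensional, via a Zariski-open set of $\SO_{2d}$ with no small-order root-of-unity eigenvalues). Taking $X$ to be the fiber over $u$ is what guarantees that the vanishing cohomology contains no nonzero Tate classes, which is exactly the property your argument needs but never secures.

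Your proposed endgame is also off target. Invoking ``the truth of the rational Tate conjecture in this range'' plus a ``numerical/weight incompatibility'' is not available: the rational Tate conjecture is not known for these varieties, and no such weight argument is supplied. The paper's contradiction is much more elementary once the vanishing cohomology is Tate-free: if $\alpha(\rho(x))=\rho(w)$ for an integral Tate class $w$ on $X$, then by the weak Lefschetz exact sequence $0\to H^{2\ell+2}_{\et}(\oW,\Z_\ell(\ell+1))\to H^{2\ell+2}_{\et}(X,\Z_\ell(\ell+1))\to K\to 0$ with $K$ torsion-free, the image of $w$ in $K$ dies (it is a Tate class in a Tate-free space up to torsion), so $w$ comes from a class $v$ on $\oW$; injectivity of restriction mod $\ell$ in degree $2\ell+2$ gives $\alpha(\rho(z))=\rho(v)$ already on $\oW$; applying the Bockstein $\beta$, which kills reductions of integral classes, yields $\beta\alpha(\rho(z))=0$, contradicting Lemma~\ref{exceptional}(iii) (whose nonvanishing survives on $\oW$ precisely because the $N$-equivalence with $N=2\ell+3$ makes the comparison map injective in degree $2\ell+3$). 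Without the monodromy/\v{C}ebotarev choice of the hypersurface and this Bockstein endgame, your proof of (iii) does not go through.
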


\begin{proof} 
We split the proof in several steps.

\begin{Step}
\label{Stepapprox}
An algebraic approximation of the classifying space of $E_8$.
\end{Step}

Let $Y$ be a variety obtained by applying Proposition \ref{approx} with $H=E_8$ and $N=2\ell+3$.
Let $Z$ be a smooth projective variety over a finite field $\F$ obtained by reducing~$Y$ modulo~$p$. 
After maybe enlarging~$\F$, we may assume that $Z(\F)\neq\varnothing$. Define $\nu:W\to Z$ to be the identity if $\ell\in\{2,5\}$ and the blow-up of an $\F$-point of~$Z$ if $\ell=3$.
Set $\oZ:=Z_{\oF}$ and $\oW:=W_{\oF}$.

Proposition \ref{approx} (i), Artin's comparison theorem \cite[XI, Th\'eor\`eme~4.4]{SGA43}, the invariance of \'etale cohomology under extensions of algebraically closed fields
\cite[XVI, Corollaire~1.6]{SGA43} and the smooth and proper base change theorems 
\cite[XVI, Corollaire~2.2]{SGA43}
yield a commutative diagram
\begin{equation}
\label{red2}
\begin{aligned}
\xymatrix
@R=0.3cm
@C=0.4cm
{
H^*(BE_8,\Z_{\ell})[t]=H^*(B(E_8\times\C^*),\Z_{\ell})\ar^{}[r]\ar^{\rho}[d]& H^*_{\et}(\oZ,\Z_{\ell})\ar^{\rho}[d]\ar^{\nu^*}[r]&H^*_{\et}(\oW,\Z_{\ell})\ar^{\rho}[d] \\
H^*(BE_8,\Z/\ell)[t]=H^*(B(E_8\times\C^*),\Z/\ell)\ar^{}[r]& H^*_{\et}(\oZ,\Z/\ell)\ar^{\nu^*}[r]&H^*_{\et}(\oW,\Z/\ell)
}
\end{aligned}
\end{equation}
whose left horizontal arrows are isomorphisms in degree $\leq 2\ell+2$ and injective in degree~$2\ell+3$, whose right horizontal arrows are injective by computation of the cohomology of a blow-up, whose bottom row is $\cA_{\ell}$-equivariant, and where $t\in H^2(B\C^*,\Z)$ is a generator of $H^*(B\C^*,\Z)=H^*(\P^{\infty}(\C),\Z)=\Z[t]$. 

Since $H^{2\ell+2}_{\et}(\oZ,\Q_{\ell})=\Q_{\ell}^{\oplus s}$ with $s=2$ if $\ell=2$, $s=3$ if $\ell=3$ and $s=4$ if $\ell=5$, by~(\ref{red2}) and Lemma~\ref{exceptional}, we deduce from the computation of the cohomology of a blow-up that $H^{2\ell+2}_{\et}(\oW,\Q_{\ell})$ always has even rank.

\begin{Step}
\label{Lefschetz}
A Lefschetz pencil.
\end{Step}

Let~$\cL$ be a very ample line bundle on $W$ and fix $m\geq 3$. After maybe enlarging the finite field~$\F$, we may choose a Lefschetz pencil $f:\wW\to\P^1_{\F}$ on $W$ associated with $\cL^{\otimes 2m}$ (see \cite[XVII, Th\'eor\`eme 2.5]{SGA72}).
Let $U\subset \P^1_{\F}$ be the smooth locus of the pencil, and let $\oeta$ be a geometric generic point of $\P^1_{\F}$.

By the hard Lefschetz theorem \cite[Corollaire 4.3.9]{Weil2}, one has a decomposition
\begin{equation}
\label{hard}
H^{2\ell+2}_{\et}(\wW_{\oeta},\Q_{\ell}(\ell+1))=H^{2\ell+2}_{\et}(\oW,\Q_{\ell}(\ell+1))\oplus H^{2\ell+2}_{\et}(\wW_{\oeta},\Q_{\ell}(\ell+1))_{\van}
\end{equation}
which is orthogonal with respect to the cup-product pairing.
Let $O$ be the orthogonal group of the vanishing cohomology $H^{2\ell+2}_{\et}(\wW_{\oeta},\Q_{\ell}(\ell+1))_{\van}$ endowed with its cup-product pairing, and let $O^+\subset O$ be the special orthogonal subgroup. We view~$O$ and $O^+$ as $\ell$-adic Lie groups.
By \cite[Proposition~1.1~(iii)]{Schoen}, the image of the geometric monodromy representation
$\pi_1(U_{\oF},\oeta)\to O$ 
is infinite (in \emph{loc.\,cit.}, the proof is written for a Lefschetz pencil on a threefold, but the argument works as well in our situation). 
It follows from \cite[Th\'eor\`eme 4.4.1]{Weil2}, that this image is open for the $\ell$-adic topology. So is, a fortiori, the image $I\subset O$ of the arithmetic monodromy representation $\pi_1(U,\oeta)\to O$. We deduce that $I$ contains a non-empty open subset $\Omega$ of $O^+$.

\begin{Step}
\label{Cebotarev}
Applying the \v{C}ebotarev density theorem.
\end{Step}

By \cite[proof of Lemma 9.2.1 (i)]{Schoen2}, the dimension of $H^{2\ell+2}_{\et}(\wW_{\oeta},\Q_{\ell}(\ell+1))$ is even.
As we had checked at the end of Step \ref{Stepapprox} that $H^{2\ell+2}_{\et}(\oW,\Q_{\ell}(\ell+1))$ has even rank, it follows from (\ref{hard}) that the dimension of $H^{2\ell+2}_{\et}(\wW_{\oeta},\Q_{\ell}(\ell+1))_{\van}$ is also even, say, equal to $2d$. Let $n_0\in \bN$ be such that $\varphi(n)>2d$ for all $n\geq n_0$, where~$\varphi$ is Euler's totient function.
As there exists an element of $\SO_{2d}(\overline{\Q}_{\ell})\simeq \SO_{2d}(\C)$ none of whose eigenvalues are roots of unity, a nonempty Zariski-open subset of $\SO_{2d}(\overline{\Q}_{\ell})$ consists of matrices with no roots of unity of order $<n_0$ as eigenvalues. It follows that there exists a nonempty open subset $\Omega'\subset \Omega$ for the $\ell$-adic topology containing only matrices none of whose eigenvalues are roots of unity of order $<n_0$.

\v{C}ebotarev's theorem now
shows the existence of a closed point ${u\in U}$ such that the image in $O$ of the Frobenius $F_u$ at $u$ belongs to $\Omega'$. Denoting by $X:=(\wW_{u})_{\oF}$ the geometric fiber of $f$ above $u$, this implies that $F_u$ acts on 
the vanishing cohomology $H^{2\ell+2}_{\et}(X,\Q_{\ell}(\ell+1))_{\van}$ with no roots of unity of order $<n_0$ as eigenvalues.
By \cite[Th\'eor\`eme 1.6]{Weil1}, the characteristic polynomials of $F_u$ acting on $H^{2\ell+2}_{\et}(\oW,\Q_{\ell}(\ell+1))$ and $H^{2\ell+2}_{\et}(X,\Q_{\ell}(\ell+1))$ have rational coefficients. By hard Lefschetz \cite[Corollaire 4.3.9]{Weil2}, so is the characteristic polynomial of $F_u$ acting on $H^{2\ell+2}_{\et}(X,\Q_{\ell}(\ell+1))_{\van}$. By our choice of~$n_0$, this characteristic polynomial, which has degree $2d$, cannot annihilate any root of unity of order $\geq n_0$. Consequently, none of the eigenvalues of $F_u$ acting on $H^{2\ell+2}_{\et}(X,\Q_{\ell}(\ell+1))_{\van}$ are roots of unity. In other words, $H^{2\ell+2}_{\et}(X,\Q_{\ell}(\ell+1))_{\van}$ contains no non-zero Tate classes.

\begin{Step}
The cohomology ring $H^*_{\et}(X,\Z_{\ell})$ is torsion-free.
\end{Step}

 By the weak Lefschetz theorem, the restriction map $H^q_{\et}(\oW,\Z_{\ell})\to H^q_{\et}(X,\Z_{\ell})$ is an isomorphism for $q\leq 2\ell+1$ and is injective with torsion-free cokernel for $q=2\ell+2$ (see \cite[(4.1.6)]{Weil2}). In view of (\ref{red2}) and Lemma~\ref{exceptional}, 
this implies that $H^q_{\et}(X,\Z_{\ell})$  is torsion-free for $q\leq 2\ell+2$.
Let us show by descending induction on $q$ that $H^q_{\et}(X,\Z_{\ell})$ is also torsion-free for $q\geq 2\ell+3$. The assertion holds for $q\geq 4\ell+5$ by cohomological dimension \cite[X, Corollaire 4.3]{SGA43}. Fix $2\ell+3\leq q\leq 4\ell+4$. 
Since both $H^{q+1}_{\et}(X,\Z_{\ell})$ and $H^{4\ell+5-q}_{\et}(X,\Z_{\ell})$ are torsion\nobreakdash-free, the long exact sequences of cohomology associated with $0\to\Z_\ell\xrightarrow{\ell^m}\Z_{\ell}\to\Z/\ell^m\to 0$
 yield isomorphisms
$$H^q_{\et}(X,\Z_{\ell})/\ell^m\isoto H^q_{\et}(X,\Z/\ell^m)\textrm{ \,and }H^{4\ell+4-q}_{\et}(X,\Z_{\ell})/\ell^m\isoto H^{4\ell+4-q}_{\et}(X,\Z/\ell^m)$$
for all $m\in\bN$. As $H^q_{\et}(X,\Z/\ell^m)$ and $H^{4\ell+4-q}_{\et}(X,\Z/\ell^m)$ have the same cardinality by Poincar\'e duality (see \cite[XVIII, (3.2.6.2)]{SGA43}), and as the cardinality of $H^{4\ell+4-q}_{\et}(X,\Z_{\ell})/\ell^m$ is a linear function of $\ell^m$ because $H^{4\ell+4-q}_{\et}(X,\Z_{\ell})$ is torsion-free, we deduce that the cardinality of $H^q_{\et}(X,\Z_{\ell})/\ell^m$ is a linear function of $\ell^m$. It follows that $H^q_{\et}(X,\Z_{\ell})$ is torsion-free. We have thus proven assertion (i).

\begin{Step}
The cohomology class $x$.
\end{Step}

Let $y\in H^4(BE_8,\Z)$ be the class given by Lemma~\ref{exceptional}, let $z\in H^4_{\et}(\oW,\Z_{\ell})$ be the class it induces via (\ref{red2}), and let $x\in H^4_{\et}(X,\Z_{\ell}(2))$ be the class obtained from $z|_X\in H^4_{\et}(X,\Z_{\ell})$ by choosing an isomorphism $\Z_{\ell}\isoto\Z_{\ell}(2)$ over $\oF$.

By Edidin and Graham \cite[Theorem 1(c)]{EG} (see also \cite[Theorem 2.14]{Totarobook}), the cycle class map $\CH^2(B(E_8\times\C^*))\otimes_{\Z}\Q\to H^4(B(E_8\times\C^*),\Q)$ is surjective. Consequently, a multiple of the class $y$, hence also of $x$, is algebraic, proving~(ii).

Assume by contradiction that $\alpha(\rho(x))\in H^{2\ell+2}_{\et}(X,\bmu_{\ell}^{\otimes 2})=H^{2\ell+2}_{\et}(X,\bmu_{\ell}^{\otimes \ell +1})$ is the reduction modulo $\ell$ of an integral Tate class $w\in H^{2\ell+2}_{\et}(X,\Z_\ell(\ell+1))$. By the weak Lefschetz theorem (see \cite[(4.1.6)]{Weil2}), there is an exact sequence
\begin{equation}
\label{ses}
0\to H^{2\ell+2}_{\et}(\oW,\Z_{\ell}(\ell+1))\to H^{2\ell+2}_{\et}(X,\Z_{\ell}(\ell+1))\to K\to 0
\end{equation}
with $K$ torsion-free. Since $K\otimes_{\Z_{\ell}}\Q_{\ell}=H^{2\ell+2}_{\et}(X,\Q_{\ell}(\ell+1))_{\van}$ contains no non-zero Tate class by Step \ref{Cebotarev}, the image of $w$ in $K$ is torsion, hence zero because $K$ is torsion-free. By (\ref{ses}), we deduce the existence of a Tate class $v\in H^{2\ell+2}_{\et}(\oW,\Z_\ell(\ell+1))$ with $\alpha(\rho(x))=\rho(v|_X)$.
As the restriction map $H^{2\ell+2}_{\et}(\oW,\bmu_{\ell}^{\otimes \ell +1})\to H^{2\ell+2}_{\et}(X,\bmu_{\ell}^{\otimes \ell +1})$ is injective, again by weak Lefschetz \cite[(4.1.6)]{Weil2}, one has $\alpha(\rho(z))=\rho(v)$ in $H^{2\ell+2}_{\et}(\oW,\bmu_{\ell}^{\otimes \ell +1})$. Applying $\beta$ to this identity shows that $\beta\alpha(\rho(z))=0$, because~$\beta$ kills the reductions mod $\ell$ of integral classes.
This is a contradiction because $\beta\alpha(\rho(z))$ is non-zero by~(\ref{red2}) and Lemma~\ref{exceptional}. We have finally proved~(iii).
\end{proof}

\begin{thm}
\label{th2b}
Choose $\ell\in\{2,3,5\}$ and let $p$ be a prime number distinct from~$\ell$.
There exists a smooth projective variety $X$ of dimension $2\ell+2$ over $\oF$ with $H^*_{\et}(X,\Z_{\ell})$ torsion-free on which the $\ell$-adic integral Tate conjecture for codimension~$2$ cycles fails.
\end{thm}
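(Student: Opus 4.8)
The plan is to deduce the theorem immediately from Proposition~\ref{proptf} and the Galois-theoretic obstruction to algebraicity over $\oF$ recorded in Corollary~\ref{Tateobstr}. Let $X$ and $x\in H^4_{\et}(X,\Z_{\ell}(2))$ be as provided by Proposition~\ref{proptf}. Then $X$ is a smooth projective variety over $\oF$ of dimension $2\ell+2$, and $H^*_{\et}(X,\Z_{\ell})$ is torsion-free by assertion~(i); it therefore suffices to show that $x$ is a Tate class which is not algebraic, for this exhibits a failure of the $\ell$-adic integral Tate conjecture for codimension~$2$ cycles on $X$.

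First I would verify that $x$ is a Tate class. By assertion~(ii) of Proposition~\ref{proptf}, some non-zero integer multiple $Nx$ is algebraic, hence is a Tate class: there is a model $X_1$ of $X$ over a finite field $k_1$ such that $Nx$ is $G_{k_1}$-invariant in $H^4_{\et}(X,\Z_{\ell}(2))=H^4_{\et}(X_1\times_{k_1}\oF,\Z_{\ell}(2))$. Since this group is torsion-free by assertion~(i), the identity $N(\sigma x-x)=0$, valid for all $\sigma\in G_{k_1}$, forces $\sigma x=x$, so $x$ itself is $G_{k_1}$-invariant and hence a Tate class.

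Next I would argue by contradiction that $x$ is not algebraic. If it were, its reduction $\rho(x)$ would lie in $H^4_{\et}(X,\bmu_{\ell}^{\otimes 2})_{\alg}$. Put $\alpha:=\Sq^2$ if $\ell=2$ and $\alpha:=P^1$ if $\ell$ is odd; in both cases $\alpha\in\cA_{\ell}$ has \emph{even} degree (equal to $2$, resp.\ $2(\ell-1)$). As $\oF$ is the algebraic closure of the finitely generated field $\F_p$, Corollary~\ref{Tateobstr}~(ii) applies and shows that $\alpha(\rho(x))$ is the reduction modulo $\ell$ of an integral Tate class, in direct contradiction with assertion~(iii) of Proposition~\ref{proptf}. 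Therefore $x$ is a non-algebraic Tate class of codimension~$2$ on $X$, and the theorem follows.

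All the substantial work is carried out in Proposition~\ref{proptf} (and, upstream of it, in the relative Wu theorem of Section~\ref{secWu}), so no genuine obstacle remains at this stage. The only point requiring a moment's care is the deduction that $x$ itself---rather than merely a multiple---is a Tate class; this is exactly where the torsion-freeness of assertion~(i) is used, in addition to its role in the statement of the theorem.
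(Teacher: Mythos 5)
Your proof is correct and takes essentially the same route as the paper: take $X$ and $x$ from Proposition~\ref{proptf}, observe that $x$ is a Tate class because a multiple of it is algebraic, and exclude algebraicity of $x$ by playing Corollary~\ref{Tateobstr}~(ii) against Proposition~\ref{proptf}~(iii). Your use of the torsion-freeness of $H^4_{\et}(X,\Z_{\ell}(2))$ to pass from ``$Nx$ is Galois-invariant'' to ``$x$ is Galois-invariant'' is a valid and welcome fleshing-out of a step the paper leaves implicit.
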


\begin{proof}
Let $X$ and $x$ be as in Proposition \ref{proptf}. The class $x$ is Tate because a multiple of it is algebraic by Proposition \ref{proptf} (ii). It cannot be algebraic itself by Corollary~\ref{Tateobstr}~(ii) and Proposition~\ref{proptf} (iii).
\end{proof}

\begin{rem}
\label{rem2}
Fix $\ell\in\{2,3,5\}$.
The proofs of Proposition~\ref{proptf} and Theorem \ref{th2b} may be adapted to show that, over the complex numbers, a $(2\ell+2)$-dimensional complete intersection of very general sufficiently ample hypersurfaces in a smooth projective variety obtained by applying Proposition~\ref{approx} to $H=E_8$
fails the integral Hodge conjecture for codimension $2$ cycles (but their cohomology ring has no $\ell$-torsion, and in fact no torsion at all when $\ell=2$). 

The main modifications to implement are the following. First, one has to use Corollary~\ref{Hodgeobstr}~(ii) instead of Corollary~\ref{Tateobstr}~(ii).
More importantly, instead of verifying that the vanishing cohomology contains no non-zero Tate classes as in Steps~\ref{Lefschetz} and~\ref{Cebotarev} of the proof of Proposition \ref{proptf}, one needs to verify that it contains no non-zero Hodge classes. This is a consequence of a Noether--Lefschetz theorem (see \cite[Th\'eor\`eme 18.28]{Voisinbook}). 
 \end{rem}

\subsection{Geometrically trivial non-algebraic classes over all finite fields}
\label{cexfinite}

In this paragraph, we work over a finite field $\F$ of characteristic $p$, with algebraic closure~$\oF$. If $X$ is a variety over~$\F$, we set $\oX:=X_{\oF}$.
 Let  $G:=\Gal(\oF/\F)$ be the absolute Galois group of $\F$, and $F\in G$ be the geometric Frobenius (see \cite[(1.15.1)]{Weil1}). 
Recall that $G$ has cohomological dimension $1$ and that, if $M$ is a finite $G$-module, one has a natural isomorphism 
\begin{equation}
\label{H1}
M/(F-\Id)\isoto H^1(G,M).
\end{equation}

\begin{prop}
\label{constrFq}
Let $p\neq \ell$ be prime numbers, and let $\F$ be a finite field of characteristic $p$.
There exist a smooth projective geometrically connected  variety~$X$ of dimension $2\ell+3$ over~$\F$ and a class $z\in H_{\et}^2(\oX,\bmu_{\ell}^{\otimes 2})$ such that the class  $\Sq^3\Sq^1(z)$ (if $\ell=2$), or $\beta P^1\beta(z)$ (if $\ell$~is odd), does not belong to the image of $F-\Id$.
\end{prop}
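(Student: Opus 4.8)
The plan is to build $X$ as an algebraic approximation of the classifying space of a well-chosen finite \'etale group scheme over $\Z_{(p)}$, using Proposition \ref{approxfinite}, so that the mod $\ell$ cohomology of $\oX$ agrees with that of a classifying space $B(A\times\C^*)$ in low degrees, and so that the Galois action on $H^1_{\et}(\oX,\Z/\ell)$ is visibly governed by the Galois action on $\Hom(\cH(\oF),\Z/\ell)$ through the isomorphism (\ref{GaloisH1}). First I would choose the finite group $A$ and the $\Z_{(p)}$-group scheme $\cH$: the point of Lemma \ref{exceptional} (iii) is that in the cohomology of $K(\Z/\ell,3)$ the iterated operation $\Sq^3\Sq^1$ (resp.\ $\beta P^1\beta$) is non-zero on the fundamental class. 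So I want a finite group whose classifying space carries a degree-$2$ mod $\ell$ class $z$ with $\beta(z)$ (resp.\ $\Sq^1 z$) behaving like that degree-$3$ class; for instance $A=\Z/\ell$ works at the level of topology, since $H^*(B\Z/\ell,\Z/\ell)$ contains a polynomial (or exterior$\otimes$polynomial) generator in degree $2$ whose Bockstein is the degree-$3$ generator, on which $\Sq^3\Sq^1$ (resp.\ $\beta P^1\beta$) is non-zero. To make the Galois action genuinely nontrivial across \emph{all} finite fields $\F$, I would take $\cH$ to be a non-constant form of $\Z/\ell$ (a twisted $\mathbf\mu_\ell$, or more precisely a suitable $\bmu_\ell$-torsor-type finite \'etale group scheme over $\Z_{(p)}$ whose generic fiber over $\F$ has the Frobenius acting on $\Hom(\cH(\oF),\Z/\ell)$ without fixed vectors), so that $F-\Id$ is an automorphism of $H^1_{\et}(\oX,\Z/\ell)$ — this is exactly what lets us sidestep the primitive $\ell^2$-th root of unity hypothesis of Scavia--Suzuki.

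Concretely, the steps I would carry out are: (1) Apply Proposition \ref{approxfinite} with $N=2\ell+3$ to get $\cY$ over $\Z_{(p)}$ and $\cL$; set $X:=\cY_\F$ (after possibly enlarging $\F$ so that $X(\F)\ne\varnothing$ and so that the group-scheme structure is defined), and $\oX:=\cY_{\oF}$. Geometric connectedness of $X$ follows because $\oX$ is an $N$-equivalence target of the simply connected space $B(\cH(\C)\times\C^*)$, hence $H^0_{\et}(\oX,\Z/\ell)=\Z/\ell$. (2) Identify $H^*_{\et}(\oX,\Z/\ell)$ in degrees $\le 2\ell+3$ with $H^*(B(A\times\C^*),\Z/\ell)=H^*(BA,\Z/\ell)[t]$ via Artin comparison and the smooth/proper base change theorems, $\cA_\ell$-equivariantly, exactly as in diagram (\ref{red2}); here one needs $2\ell+3$ to be large enough that the relevant operation, which lands in degree $2+1+\deg(\Sq^3)+\deg(\Sq^1)=6$ when $\ell=2$ and in degree $2+2(\ell-1)+2$ when $\ell$ odd, i.e.\ degree $\le 2\ell+3$, is captured — so one should double-check that $6\le 2\cdot2+3=7$ and $2\ell+2\le 2\ell+3$, which hold. (3) Pick $z\in H^2_{\et}(\oX,\bmu_\ell^{\otimes2})$ to be the class corresponding under (2) to a degree-$2$ generator of $H^*(BA,\Z/\ell)$ — for $\ell=2$ the square $\lambda^2$ of the degree-$1$ generator, for $\ell$ odd the polynomial generator $y_2$; by Lemma \ref{exceptional} (iii)-style reasoning (i.e.\ the non-vanishing of $\Sq^3\Sq^1$, resp.\ $\beta P^1\beta$, on the appropriate class), $\Sq^3\Sq^1(z)\ne0$ (resp.\ $\beta P^1\beta(z)\ne0$) in $H^{2\ell+?}_{\et}(\oX,\bmu_\ell^{\otimes*})$. (4) Show this nonzero class is not in the image of $F-\Id$: this is where I would use that, under (\ref{GaloisH1}), $F$ acts on the degree-$1$ part (and hence, via the Cartan formula and $\cA_\ell$-equivariance, predictably on the whole subalgebra) through the chosen nontrivial character, so that $F-\Id$ acts invertibly on the relevant graded piece coming from $BA$ — but $\Sq^3\Sq^1(z)$ lies in the part where $F$ acts trivially or through a power incompatible with $-\Id$...

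Actually, let me reconsider step (4), because it is the real crux. The cleanest way: arrange the Galois module so that $F$ acts on $H^1_{\et}(\oX,\Z/\ell)\cong\Hom(\cH(\oF),\Z/\ell)$ by multiplication by a scalar $\chi\in(\Z/\ell)^*$ with $\chi\ne1$ (possible by choosing $\cH$ to be a suitable twist, using that over any finite field $\F$ one can find a degree-$d$ extension with $\Gal=\Z/d$ surjecting onto $(\Z/\ell)^*$-related data — here one uses $p\ne\ell$ so $\bmu_\ell$ is \'etale). Then $F$ acts on a monomial of degree $q$ in the generators (plus Tate twists) by $\chi^{a}$ for the appropriate exponent $a$, while $F-\Id$ is invertible on each eigenspace with eigenvalue $\ne1$; one then checks that the eigenvalue by which $F$ acts on $\Sq^3\Sq^1(z)$ (resp.\ $\beta P^1\beta(z)$) is $\ne1$, so if $\Sq^3\Sq^1(z)$ \emph{were} in $\mathrm{Im}(F-\Id)$ it would still be a nonzero element of that eigenspace — wait, that shows nothing. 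The correct argument must go the other way: I would instead arrange that $\Sq^3\Sq^1(z)$ lies in the $F=\Id$ eigenspace (i.e.\ in a Tate-twist-compatible position where Frobenius is trivial — e.g.\ by suitable choice of twist so the total weight is $0$), and simultaneously that $z$ itself, being in a \emph{different} eigenspace, cannot contribute: more precisely, on the $F=\Id$ eigenspace $F-\Id$ is the zero map, so its image there is $0$, yet $\Sq^3\Sq^1(z)$ is a nonzero element of that eigenspace. This forces careful bookkeeping of which graded/eigen-piece each class lands in, using the decomposition of $H^*_{\et}(\oX,\Z/\ell)$ into Frobenius eigenspaces inherited from the $\cH(\oF)$-action; the hard part will be verifying that the operation $\Sq^3\Sq^1$ (resp.\ $\beta P^1\beta$) applied to the chosen generator lands in the trivial-Frobenius summand while being nonzero — equivalently, keeping track of Tate twists and the action of $\Gamma=(\Z/\ell)^*$ versus Frobenius, and checking the purely topological non-vanishing in the cohomology of $B(\Z/\ell)$ (or whichever finite group is used) by a direct computation with the known structure of $H^*(B\Z/\ell,\Z/\ell)$ and the Adem relations. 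That eigenspace/weight bookkeeping, rather than any single cohomology computation, is the main obstacle.
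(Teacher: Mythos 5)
Your overall strategy is the paper's: apply Proposition \ref{approxfinite} with $N=2\ell+3$ to a finite \'etale group scheme, transport the computation to $H^*(B(A\times\C^*),\Z/\ell)$ via the $\cA_\ell$-equivariant comparison map, and then do Frobenius-eigenvalue bookkeeping through (\ref{GaloisH1}) and Proposition \ref{approxfinite} (ii). But your choice of group and of class $z$ fails at the decisive point. With $A=\Z/\ell$ (a single copy), \emph{every} class in $H^2(B(\Z/\ell\times\C^*),\Z/\ell)$ is killed by the Bockstein: for $\ell$ odd the degree-$2$ generator is $y=\beta(x)$, so $\beta(y)=0$ (your claim that the degree-$2$ generator has Bockstein equal to a degree-$3$ generator is false --- $\beta\beta=0$), and $\beta(t)=0$; for $\ell=2$ the degree-$2$ classes are spanned by $x^2$ and $t$, and $\Sq^1(x^2)=\Sq^1(t)=0$. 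Hence $\beta P^1\beta(z)=0$ (resp.\ $\Sq^3\Sq^1(z)=0$) for any admissible $z$, and the construction collapses; the non-vanishing of $\Sq^3\Sq^1$ and $\beta P^1\beta$ on degree-$3$ classes of $K(\Z/\ell,3)$ (Lemma \ref{exceptional} (iii)) does not transfer to degree-$2$ classes of $B\Z/\ell$. The paper avoids this by taking $\cH=(\bmu_\ell)^2$ and the \emph{product} class $w=\Phi(x_1x_2)$ of the two degree-$1$ generators, for which $\Sq^3\Sq^1(x_1x_2)=x_1^4x_2^2+x_1^2x_2^4\neq 0$ and $\beta P^1\beta(x_1x_2)=y_1^{\ell}y_2-y_1y_2^{\ell}\neq 0$; some rank-$\geq 2$ choice of this kind is genuinely needed.

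The second gap is your step (4), which you leave as ``the main obstacle'' after first proposing the wrong direction (making $F-\Id$ invertible on $H^1$, which indeed proves nothing). The mechanism you finally gesture at is the right one, but it has to be implemented: with $\cH=(\bmu_\ell)^2$ one gets $F(\Phi(x_i))=p^a\Phi(x_i)$ from the $G$-equivariance of (\ref{GaloisH1}), $F(\Phi(t))=p^a\Phi(t)$ from Proposition \ref{approxfinite} (ii), and after multiplying by a class $\xi\in H^0_{\et}(\oX,\bmu_\ell^{\otimes 2})$ to account for the Tate twist, $F$ is \emph{diagonal} in the monomial basis of $H^{2\ell+2}_{\et}(\oX,\bmu_\ell^{\otimes 2})$, and the monomials occurring in $\Sq^3\Sq^1(z)$ (resp.\ $\beta P^1\beta(z)$) are $F$-fixed (for $\ell$ odd the eigenvalue is $p^{a(\ell+1)}\cdot p^{-2a}=p^{a(\ell-1)}\equiv 1 \pmod \ell$). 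Diagonalizability is what makes ``nonzero and $F$-fixed $\Rightarrow$ not in $\Ima(F-\Id)$'' valid, and none of this needs $F$ to act without fixed vectors on $H^1$. Finally, a small but real point: the statement is for an arbitrary finite field $\F$, so your parenthetical ``after possibly enlarging $\F$'' is not allowed (and is not needed).
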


\begin{proof}
Consider the \'etale abelian sheaf $(\bmu_{\ell})^2$ 
over $\Z_{(p)}$ and view it as a finite \'etale group scheme~$\cH$
 over $\Z_{(p)}$. Let $\cY$ be a smooth projective scheme over~$\Z_{(p)}$ obtained by applying Proposition \ref{approxfinite} to $\cH$, with $N=2\ell+3$. Define $X:=\cY_{\F}$.

Proposition \ref{approxfinite} (i), Artin's comparison theorem \cite[XI, Th\'eor\`eme~4.4]{SGA43}, the invariance of \'etale cohomology under extensions of algebraically closed fields
\cite[XVI, Corollaire~1.6]{SGA43} and the smooth and proper base change theorems 
\cite[XVI, Corollaire~2.2]{SGA43} yield a morphism
\begin{equation*}
\Phi:H^*(B((\Z/\ell)^2\times\C^*),\Z/\ell)\to H^*_{\et}(\oX,\Z/\ell)
\end{equation*}
which is an isomorphism in degree $\leq 2\ell+2$, and which is $\cA_{\ell}$-equivariant. That~$\Phi$ is an isomorphism in degree $0$ implies that $X$ is geometrically connected.

It is well-known that $H^*(B\C^*,\Z/\ell)=\Z/\ell[t]$ with $t\in H^2(B\C^*,\Z/\ell)$, that
$H^*(B\Z/2,\Z/2)=\Z/2[x]$ with $x\in H^1(B\Z/2,\Z/2)$, and that, when $\ell$ is odd, $H^*(B\Z/\ell,\Z/\ell)=\Z/\ell[x,y]/\langle x^2\rangle$ with $x\in H^1(B\Z/\ell,\Z/\ell)$ and $y=\beta(x)$.
It then follows from the K\"unneth formula that
$$H^*(B((\Z/2)^2\times\C^*),\Z/2)=\Z/2[x_1,x_2,t],$$
and that, when $\ell$ is odd, one has 
$$H^*(B((\Z/\ell)^2\times\C^*),\Z/\ell)=\Z/\ell[x_1,y_1,x_2,y_2,t]/\langle x_1^2,x_2^2\rangle.$$

We now compute Galois actions. Let $|\F|=p^a$ be the cardinality of $\F$. By $G$-equivariance of (\ref{GaloisH1}) and our choice of~$\cH$, one has $F(\Phi(x_i))=p^a\Phi(x_i)$ in $H^1_{\et}(\oX,\Z/\ell)$ for $i\in\{1,2\}$. 
When $\ell$ is odd, one also has $F(\Phi(y_i))=p^a\Phi(y_i)$ in $H^2_{\et}(\oX,\Z/\ell)$ for $i\in\{1,2\}$ by $G$-equivariance of the Bockstein. In addition, since $\Phi(t)\in H^2_{\et}(\oX,\Z/\ell)$ is the first Chern class of a line bundle which is defined over $\F$, by Proposition \ref{approxfinite} (ii), one has $F(\Phi(t))=p^a\Phi(t)$ in $H^2_{\et}(\oX,\Z/\ell)$.

Define $w:=\Phi(x_1x_2)$.  When $\ell=2$, one has 
$$\Sq^3\Sq^1(w)=\Phi(\Sq^3\Sq^1(x_1x_2))=\Phi(\Sq^3(x_1^2x_2+x_1x_2^2))=\Phi(x_1^4x_2^2+x_1^2x_2^4).$$
When $\ell$ is odd, one computes similarly that
\begin{alignat*}{5}
\beta P^1\beta(w)&=\Phi(\beta P^1\beta(x_1x_2))=\Phi(\beta P^1(y_1x_2-x_1y_2))\\
&=\Phi(\beta(y^{\ell}_1x_2-x_1y^{\ell}_2))=\Phi(y_1^{\ell}y_2-y_1y_2^{\ell}).
\end{alignat*}
Choose a class $\xi\in H^0_{\et}(\oX,\bmu_{\ell}^{\otimes 2})$ and define $z:=\xi\cdot w\in H_{\et}^{2}(\oX,\bmu_{\ell}^{\otimes 2})$.
The action of~$F$ on the group $H_{\et}^{2\ell+2}(\oX,\bmu_{\ell}^{\otimes 2})$ is diagonal in a basis formed by multiples by $\xi$ of appropriate monomials in $\Phi(t)$, $\Phi(x_1)$, $\Phi(x_2)$ (and, when $\ell$ is odd, $\Phi(y_1)$ and~$\Phi(y_2)$).
Paying attention to the Tate twist, we see that the elements $\xi\cdot\Phi(x_1^4x_2^2)$ (when $\ell=2$) and 
$\xi\cdot\Phi(y^{\ell}_1y_2)$ (when~$\ell$ is odd) of this group are $F$-stable.
 Taking Remark~\ref{rems}~(iv) into account, we deduce that $\Sq^3\Sq^1(z)=\xi\cdot\Phi(x_1^4x_2^2+x_1^2x_2^4)$ (when $\ell=2$) and $\beta P^1\beta(z)=\xi\cdot\Phi(y_1^{\ell}y_2-y_1y_2^{\ell})$ (when $\ell$ is odd), are not in the image of $F-\Id$.
\end{proof}

The following theorem was proven by Scavia and Suzuki \cite[Theorem 1.3]{ScaSuz} when $\F$ contains a primitive $\ell^2$-th root of unity. 

\begin{thm}
\label{finitethm}
Let $p\neq \ell$ be prime numbers, and let $\F$ be a finite field of characteristic $p$.
There exist a smooth projective geometrically connected
variety $X$ of dimension $2\ell+3$ over~$\F$ and a non-algebraic class 
$$x\in \Ker\big(H^4_{\et}(X,\Z_{\ell}(2))\to H^4_{\et}(\oX,\Z_{\ell}(2))\big).$$
\end{thm}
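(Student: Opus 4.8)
The plan is to use the Hochschild--Serre spectral sequence to promote the geometrically trivial mod~$\ell$ data of Proposition \ref{constrFq} to an integral class in $H^4_{\et}(X,\Z_\ell(2))$, and then to obstruct its algebraicity by the Steenrod-operation compatibilities of Theorem \ref{compatibility} and Proposition \ref{weirdobstr}, exploiting crucially that $\cd(G)=1$, where $G:=\Gal(\oF/\F)$ and $F\in G$ is the geometric Frobenius.

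Let $X$ and $z\in H^2_{\et}(\oX,\bmu_\ell^{\otimes 2})$ be as in Proposition \ref{constrFq}; this already supplies the smooth projective geometrically connected variety of dimension $2\ell+3$ of the statement. Since $\cd(G)=1$, the Hochschild--Serre spectral sequence degenerates, for any coefficients, into short exact sequences $0\to H^1(G,H^{n-1}_{\et}(\oX,-))\to H^n_{\et}(X,-)\to H^n_{\et}(\oX,-)^G\to 0$, and I identify $H^1(G,M)$ with $M/(F-\Id)M$. In particular $\Ker\big(H^4_{\et}(X,\Z_\ell(2))\to H^4_{\et}(\oX,\Z_\ell(2))\big)=H^1(G,H^3_{\et}(\oX,\Z_\ell(2)))$. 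I let $\tilde z\in H^3_{\et}(\oX,\Z_\ell(2))$ be the integral Bockstein of $z$, that is, the image of $z$ under the boundary map of $0\to\Z_\ell(2)\xrightarrow{\ell}\Z_\ell(2)\to\bmu_\ell^{\otimes 2}\to 0$, and I define $x\in H^4_{\et}(X,\Z_\ell(2))$ to be the image of the class of $\tilde z$ in $H^1(G,H^3_{\et}(\oX,\Z_\ell(2)))$ under the edge inclusion. By construction $x$ restricts to $0$ over $\oX$.

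To see that $x$ is not algebraic, set $\theta:=\Sq^3$ if $\ell=2$ and $\theta:=\beta P^1$ if $\ell$ is odd. Because $\omega|_{\oX}=0$, Lemma \ref{calculBock} gives $\beta(z)=\rho(\tilde z)$ in $H^3_{\et}(\oX,\bmu_\ell^{\otimes 2})$; since reduction modulo $\ell$ and the Steenrod operations commute with the Hochschild--Serre edge inclusions --- the latter by Scavia and Suzuki's analysis of Steenrod operations in the Hochschild--Serre spectral sequence \cite{ScaSuz} --- the class $\theta(\rho(x))$ is the image of the class of $\theta(\beta(z))$ in $H^1(G,H^{2\ell+2}_{\et}(\oX,\bmu_\ell^{\otimes 2}))$. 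Here $\theta(\beta(z))$ is precisely $\Sq^3\Sq^1(z)$ if $\ell=2$ and $\beta P^1\beta(z)$ if $\ell$ is odd, which by Proposition \ref{constrFq} is not in the image of $F-\Id$; hence its class is nonzero and, by injectivity of the edge inclusion, $\theta(\rho(x))\neq 0$ in $H^{2\ell+3}_{\et}(X,-)$. Suppose now, for contradiction, that $x=\cl(\chi)$ with $\chi\in\CH^2(X)$, and write $\bar\chi$ for the reduction of $\chi$ modulo $\ell$, so that $\rho(x)=\cl(\bar\chi)$. Since $\cd(G)=1$, the Hochschild--Serre filtration on $H^*_{\et}(X,-)$ is multiplicative with vanishing second step, so the product of any two classes that restrict to $0$ over $\oX$ is zero; in particular $\omega^2=0$ on $X$, and $\omega$ times any class dying over $\oX$ vanishes. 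If $\ell=2$, Proposition \ref{weirdobstr}(ii) with $c=2$ gives $\big(\Sq^3+\omega\cdot\Sq^2+\omega^3\big)(\rho(x))=0$; as $\rho(x)$, and hence $\Sq^2(\rho(x))$, dies over $\oX$ and $\omega^3=0$, this forces $\Sq^3(\rho(x))=0$. If $\ell$ is odd, Theorem \ref{compatibility} gives $P^1(\rho(x))=\cl(P^1(\bar\chi))$, and then Lemma \ref{compaBock} gives $\beta P^1(\rho(x))=-(\ell+1)\,\omega\cdot P^1(\rho(x))=0$, since $\omega$ and $P^1(\rho(x))$ both die over $\oX$. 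In either case $\theta(\rho(x))=0$, contradicting the previous sentence; so $x$ is not algebraic.

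The content beyond Proposition \ref{constrFq} is concentrated in the last paragraph, and this is the delicate point: $\omega$ vanishes over $\oX$ but not over $X$, so one cannot invoke the naive statement that odd-degree Steenrod operations kill algebraic classes --- false over finite fields lacking a primitive $\ell^2$-th root of unity --- and must instead use the $\omega$-corrected compatibilities of Theorem \ref{compatibility} and Proposition \ref{weirdobstr}, whose correction terms then disappear exactly because $\cd(G)=1$; one must also keep careful track of the Tate twists throughout. The only other non-formal ingredient is Scavia and Suzuki's compatibility of Steenrod operations with the Hochschild--Serre spectral sequence.
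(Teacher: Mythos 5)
Your proposal is correct and takes essentially the same route as the paper: both start from Proposition \ref{constrFq}, push $\beta(z)$ (equivalently the integral Bockstein of $z$) through the Hochschild--Serre edge map using $\cd(G)=1$, invoke Scavia--Suzuki's compatibility of Steenrod operations with this edge map, and dispose of the $\omega$-correction terms coming from Theorem \ref{compatibility} and Proposition \ref{weirdobstr} via the vanishing of the second step of the Hochschild--Serre filtration. The only points you gloss --- deducing the twisted-coefficient version of the Scavia--Suzuki compatibility via $\Gamma$-isotypic components over $S'_{\ell}$, and obtaining the $\Z_{\ell}(2)$-coefficient short exact sequence as a limit over finite coefficients --- are routine, and your explicit handling of the term $\omega\cdot P^1(\rho(x))$ in the odd case makes precise what the paper states more briefly.
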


\begin{proof}
Let $X$ and~$z$ be as in Proposition \ref{constrFq}. The inverse limit, when $m$ varies, of the short exact sequences 
\begin{equation}
\label{HS}
0\to H^1(G,H^{q-1}_{\et}(\oX,\bmu_{\ell^m}^{\otimes r}))\to H^q_{\et}(X,\bmu_{\ell^m}^{\otimes r})\to H^q_{\et}(\oX,\bmu_{\ell^m}^{\otimes r})^{G}\to 0
\end{equation}
stemming from Hochschild--Serre spectral sequences yield short exact sequences
\begin{equation}
\label{HS2}
0\to H^1(G,H^{q-1}_{\et}(\oX,\Z_{\ell}(r)))\to H^q_{\et}(X,\Z_{\ell}(r))\to H^q_{\et}(\oX,\Z_{\ell}(r))^{G}\to 0
\end{equation}
because the groups $H^q_{\et}(\oX,\bmu_{\ell^m}^{\otimes r}))$ are finite and the $G$-cohomology groups of a finite $G$-module are finite.

Denote by $\psi_X: H^{q-1}_{\et}(\oX,\bmu_{\ell^m}^{\otimes r})\to H^q_{\et}(X,\bmu_{\ell^m}^{\otimes r})$ the compositions
$$H^{q-1}_{\et}(\oX,\bmu_{\ell^m}^{\otimes r})\to H^{q-1}_{\et}(\oX,\bmu_{\ell^m}^{\otimes r})/(F-\Id)\to H^1(G,H^{q-1}_{\et}(\oX,\bmu_{\ell^m}^{\otimes r}))\to H^q_{\et}(X,\bmu_{\ell^m}^{\otimes r})$$
of the quotient map, of the isomorphism (\ref{H1}), and of the left arrow of (\ref{HS}). Consider the element $y:=\psi_X(\beta(z))$ of $ H^4_{\et}(X,\bmu_{\ell}^{\otimes 2})$. As $\beta$ is the Bockstein, 
$\beta(z)$ lifts compatibly to $H^3_{\et}(\oX,\bmu_{\ell^m}^{\otimes 2})$ for all $m$. Applying $\psi_X$ to these  compatible lifts, we deduce from (\ref{HS2}) that $y$ lifts to a class $x\in H^1(G,H^3_{\et}(\oX,\Z_{\ell}(2)))\subset H^4_{\et}(X,\Z_{\ell}(2))$. 
Moreover, it follows from~(\ref{HS2}) that $x\in \Ker\big(H^4_{\et}(X,\Z_{\ell}(2))\to H^4_{\et}(\oX,\Z_{\ell}(2))\big)$.

It remains to prove that $x$ is not algebraic.
 We will prove the stronger statement that $y$ is not algebraic. 
 Recall from \S\ref{Steenrodtwistedetale} the definition of the finite \'etale cover $\pi:S'_{\ell}\to S_{\ell}$  with Galois group $\Gamma:=(\Z/\ell)^*$, set $X':=X\times_{S_{\ell}}S'_{\ell}$ and 
$\oX':=\oX\times_{S_{\ell}}S'_{\ell}$, and define $\psi_{X'}$ in the same way as $\psi_X$.
For $\alpha=\beta P^1$ (if $\ell$ is odd), or $\alpha\in\{\Sq^2,\Sq^3\}$ (if $\ell=2$), the diagram
\begin{equation*}
\begin{aligned}
\xymatrix
@R=0.3cm
{
 H^{3}_{\et}(\oX',\Z/\ell)\ar^{\alpha\hspace{1.5em}}[r]\ar^{\psi_{X'}}[d]&H^{3+\deg(\alpha)}_{\et}(\oX',\Z/\ell)\ar^{\psi_{X'}}[d]  \\
H^4_{\et}(X',\Z/\ell)\ar^{\alpha\hspace{1.5em}}[r]&H^{4+\deg(\alpha)}_{\et}(X',\Z/\ell)
}
\end{aligned}
\end{equation*}
commutes by \cite[Corollary 3.5]{ScaSuz}. Retaining the appropriate isotypic components for the action of $\Gamma$, we deduce the commutativity of the following diagram:
\begin{equation}
\label{SteenHS}
\begin{aligned}
\xymatrix
@R=0.3cm
{
 H^{3}_{\et}(\oX,\bmu_{\ell}^{\otimes r})\ar^{\alpha\hspace{1.5em}}[r]\ar^{\psi_X}[d]&H^{3+\deg(\alpha)}_{\et}(\oX,\bmu_{\ell}^{\otimes r})\ar^{\psi_X}[d]  \\
H^4_{\et}(X,\bmu_{\ell}^{\otimes r})\ar^{\alpha\hspace{1.5em}}[r]&H^{4+\deg(\alpha)}_{\et}(X,\bmu_{\ell}^{\otimes r})\rlap{.}
}
\end{aligned}
\end{equation}

Assume first that $\ell$ is odd. As $\beta P^1\beta(z)$ does not  belong to the image of ${F-\Id}$ by Proposition \ref{constrFq}, one has $\psi_X(\beta P^1\beta(z))\neq 0$, hence $\beta P^1(y)\neq 0$ by the commutativity of (\ref{SteenHS}) for $\alpha=\beta P^1$. It follows from Proposition~\ref{stability}~(ii) and Lemma \ref{compaBock} that $\beta P^1$ kills $H^4_{\et}(X,\bmu_{\ell}^{\otimes 2})_{\alg}$. Hence~$y$ cannot be algebraic.

Assume now that $\ell=2$. As $\Sq^3\Sq^1(z)$ does not  belong to the image of ${F-\Id}$ by Proposition \ref{constrFq}, one has $\psi_X(\Sq^3\Sq^1(z))\neq 0$, hence $\Sq^3(y)\neq 0$ by the commutativity of (\ref{SteenHS}) for $\alpha=\Sq^3$. The commutativity of (\ref{SteenHS}) for $\alpha=\Sq^2$ shows that $\Sq^2(y)\in H^1(G,H^{5}_{\et}(\oX,\bmu_{2}^{\otimes 2}))\subset H^6_{\et}(X,\bmu_{2}^{\otimes 2})$. In view of (\ref{HS}), one thus has
$$\Sq^2(y)\in F^1H^6_{\et}(X,\bmu_{2}^{\otimes 2})=\Ker\big(H^6_{\et}(X,\bmu_{2}^{\otimes 2})\to H^6_{\et}(\oX,\bmu_{2}^{\otimes 2})\big),$$
where $F^{\bullet}H^q_{\et}(X,\bmu_{2}^{\otimes r})$ denotes the abutment filtration of the Hochschild--Serre spectral sequence.
Since $\varpi\in F^1H^1_{\et}(X,\Z/2)=\Ker\big(H^1_{\et}(X,\Z/2)\to H^1_{\et}(\oX,\Z/2)\big)$, the multiplicativity properties of Hochschild--Serre spectral sequences imply that 
$\varpi\cdot\Sq^2(y)\in F^2H^7_{\et}(X,\bmu_{2}^{\otimes 2})$. This group vanishes because $G$ has cohomological dimension $1$, so that $\varpi\cdot\Sq^2(y)=0$. It follows that $(\Sq^3+\varpi\cdot \Sq^2)(y)\neq 0$. The class~$y$ is not algebraic because
$\Sq^3+\varpi\cdot\Sq^2$ kills $H^4_{\et}(X,\bmu_{2}^{\otimes 2})_{\alg}$ 
by Proposition~\ref{weirdobstr}~(ii).
\end{proof}

\subsection{A\hspace{-.05em} fourfold\hspace{-.05em} failing\hspace{-.05em} the\hspace{-.05em} integral\hspace{-.05em} Hodge\hspace{-.05em} conjecture\hspace{-.05em} over\hspace{-.05em} \texorpdfstring{$\R$}{R} but\hspace{-.05em} not\hspace{-.05em} over\hspace{-.05em}~\texorpdfstring{$\C$}{C}}
\label{cexreal}

In this  paragraph, we work over $\R$. Let $G:=\Gal(\C/\R)\simeq \Z/2$ be generated by the complex conjugation $\sigma$. Let $\Z(k)$ be the $G$-module whose underlying abelian group is $\Z$, on which $\sigma$ acts by~$(-1)^k$.
Let ${\omega\in H^1(G,\Z(1))=H^1_G(\pt,\Z(1))\simeq \Z/2}$ and $\varpi\in H^1(G,\Z/2)=H^1_G(\pt,\Z/2)\simeq \Z/2$ and  be the generators.  One has $\rho(\omega)=\varpi$. We still denote by $\omega\in H^1_G(X(\C),\Z(1))$ and $\varpi\in H^1_G(X(\C),\Z/2)$ their pull-backs to the $G$-equivariant Betti cohomology of any variety $X$ over $\R$.  The class $\varpi\in H^1_G(X(\C),\Z/2)$ corresponds to the one defined in \S\ref{parBockstein} through the isomorphism $H^*_{\et}(X,\Z/2)\isoto H^*_G(X(\C),\Z/2)$ between \'etale cohomology and equivariant Betti cohomology \cite[Corollary 15.3.1]{Scheidererbook}. If $X$ is smooth, Krasnov (\cite[\S 2.1]{Kraccvb}, see also \cite[\S 1.6.1]{BW1}) has defined, for all $c\geq 0$, a cycle class map
\begin{equation}
\label{eqccm}
\cl:\CH^c(X)\to H^{2c}_G(X(\C),\Z(c)).
\end{equation}
The classes in the image of (\ref{eqccm}) are said to be algebraic.

Let $E$ be the real elliptic curve with set of complex points $E(\C)=\C/(\Z\oplus i\Z)$, on which $\sigma\in G$ acts through the usual complex conjugation of $\C$.

\begin{prop}
\label{realce}
There exist $a\in H^1_G(E(\C),\Z)$ and $b\in H^1_G(E(\C),\Z(1))$ such that $a\cdot b\in H^2_G(E(\C),\Z(1))$ is the cycle class of a real point $P\in E(\R)$. 
\end{prop}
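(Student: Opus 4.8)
The plan is to compute the $G$-equivariant cohomology of $E(\C)$ explicitly and identify inside it the cycle class of a real point. First I would recall that, for the real elliptic curve $E$ with $E(\C)=\C/(\Z\oplus i\Z)$ and $\sigma$ acting as complex conjugation, the fixed locus $E(\R)$ is nonempty (it contains $0$), so the cycle class map $\cl:\CH^1(E)\to H^2_G(E(\C),\Z(1))$ sends the class of a real point $P$ to a nonzero element: indeed the composition with restriction to $H^2(E(\C),\Z(1))\cong\Z$ gives the ordinary fundamental class, which is a generator. So it suffices to exhibit $a\in H^1_G(E(\C),\Z)$ and $b\in H^1_G(E(\C),\Z(1))$ whose product is that generator of $H^2_G(E(\C),\Z(1))$ mapping to $1\in H^2(E(\C),\Z(1))$.

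Next I would get hold of $H^1_G(E(\C),\Z)$ and $H^1_G(E(\C),\Z(1))$ via the Hochschild--Serre (or Leray--Serre for the Borel construction) spectral sequence $H^p(G,H^q(E(\C),\Z(k)))\Rightarrow H^{p+q}_G(E(\C),\Z(k))$. The key input is the $G$-module structure on $H^*(E(\C),\Z)$: writing $E(\C)$ with its two standard $1$-cycles, the real structure $z\mapsto\bar z$ on $\C/(\Z\oplus i\Z)$ acts on $H^1(E(\C),\Z)\cong\Z^2$ by the matrix $\mathrm{diag}(1,-1)$ in the basis dual to the real and imaginary periods. Thus $H^1(E(\C),\Z)\cong\Z(0)\oplus\Z(1)$ as a $G$-module, and $H^2(E(\C),\Z)\cong\Z(1)$ with the cup product pairing $\Z(0)\otimes\Z(1)\to\Z(1)$ being the standard one (determinant $=1$). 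The edge maps then show there is a class $a\in H^1_G(E(\C),\Z)$ lifting the generator of $H^1(E(\C),\Z(0))^G$ and a class $b\in H^1_G(E(\C),\Z(1))$ lifting the generator of $H^1(E(\C),\Z(1))^G$ (both $H^0(G,-)$ terms survive since there is nothing in the relevant $E_2$-positions to kill them — one checks $H^2(G,H^0)=H^2(G,\Z)$ resp.\ $H^2(G,\Z(1))$ do not interfere with degree-$1$ classes, and the relevant differentials land in groups that vanish). By multiplicativity of the spectral sequence, $a\cdot b\in H^2_G(E(\C),\Z(1))$ restricts to the product of the two generators in $H^2(E(\C),\Z(1))$, which is the fundamental class. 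Comparing with $\cl(P)$, which also restricts to the fundamental class, and checking that the ambiguity (the image of $H^2(G,\Z(1))\hookrightarrow H^2_G(E(\C),\Z(1))$, i.e.\ the span of $\omega^2$-type classes pulled back from the point) can be absorbed, I conclude $a\cdot b=\cl(P)$, possibly after modifying $a$ or $b$ by an $\omega$-multiple or changing the choice of real point (there may be several components of $E(\R)$).

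The main obstacle I anticipate is the last comparison step: $a\cdot b$ and $\cl(P)$ are both classes in $H^2_G(E(\C),\Z(1))$ restricting to the same generator of $H^2(E(\C),\Z(1))$, but $H^2_G(E(\C),\Z(1))$ may be strictly larger than $\Z$ (it can contain a copy of $H^2(G,\Z(1))=\Z/2$ from the base), so $a\cdot b$ and $\cl(P)$ could a priori differ by the nonzero $2$-torsion class $\omega_{\Z}\cdot(\text{something})$ pulled back from $\mathrm{pt}$. To settle this I would either (i) choose the lifts $a,b$ carefully, exploiting the freedom to add $\omega$-multiples of lower classes so as to land exactly on $\cl(P)$, using that modifying $b$ by $\omega_{\Z}\cdot a'$ for a suitable $a'\in H^0$ changes $a\cdot b$ by $\omega_{\Z}$ times the fundamental-class reduction; or (ii) work directly on the finite CW model of the Borel construction $E(\C)\times_G EG$ and compute the ring $H^*_G(E(\C),\Z(\bullet))$ outright — $E(\C)$ is a particularly simple $G$-space ($E(\R)\neq\varnothing$, real part a union of circles, quotient a Möbius band or annulus) and the computation is short. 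Option (ii) is cleanest and I would likely present the proof that way: exhibit the ring structure, name the generators $a$ and $b$ of the two degree-one pieces, and read off $a\cdot b=\cl(P)$ directly.
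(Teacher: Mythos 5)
Your route is genuinely different from the paper's, and it can be made to work, but as written it has two soft spots. The paper does not fix a point $P$ in advance and then match classes: it lifts $a'$ and $b'$ to equivariant classes by invoking Krasnov's degeneration of the Hochschild--Serre spectral sequences for $H^*_G(E(\C),\Z(k))$, then applies Krasnov's real Lefschetz $(1,1)$ theorem to conclude that $a\cdot b$ is the class of \emph{some} real divisor $D$, and finally uses that $a'\cdot b'$ has degree $1$ together with Riemann--Roch on $E$ to replace $D$ by an effective degree-$1$ divisor, i.e.\ a real point. This produces $P$ at the end and entirely avoids the comparison step that you correctly identify as the main obstacle of your approach.

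Two concrete issues in your plan. First, the lifting of $a'$: the obstruction to lifting the generator of $H^0(G,H^1(E(\C),\Z))$ lives in $E_2^{2,0}=H^2(G,\Z)=\Z/2$, which does \emph{not} vanish, so your assertion that ``the relevant differentials land in groups that vanish'' is false for the untwisted coefficient sheaf (it is fine for $\Z(1)$, where $H^2(G,\Z(1))=0$). The conclusion is still true, but needs an argument: either cite Krasnov's degeneration results (as the paper does), or use that $E(\R)\neq\varnothing$, so restriction to a real point splits the pullback from the point and forces every differential landing in the bottom row to vanish. Second, the matching step: once $a$ and $b$ are chosen, $a\cdot b-\cl(P)$ lies in the kernel of restriction to $H^2(E(\C),\Z(1))$, i.e.\ in $F^1H^2_G(E(\C),\Z(1))$; since $F^2=H^2(G,\Z(1))=0$ and $E_2^{1,1}=H^1(G,H^1(E(\C),\Z(1)))\simeq\Z/2$ is generated by the symbol of $\omega_{\Z}\cdot a$, the discrepancy is either $0$ or exactly $\omega_{\Z}\cdot a$, and replacing $b$ by $b\pm\omega_{\Z}$ absorbs it, as you suggest. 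So option (i) closes the gap, but these verifications (degeneration in the relevant range, $F^2=0$, and identification of the generator of $F^1$ with $\omega_{\Z}\cdot a$) must actually be carried out; your option (ii) is not an argument as it stands. The trade-off: your proof is more self-contained (no real Lefschetz $(1,1)$, no Riemann--Roch) but requires this explicit bookkeeping in the equivariant spectral sequence, whereas the paper's argument delegates exactly that bookkeeping to Krasnov's theorems.
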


\begin{proof}
One has $H^1(E(\C),\Z)\simeq \Z\oplus \Z(1)$ as $G$-modules, where the first factor is generated by the class $a'\in H^1(E(\C),\Z)$ of the loop $\R/\Z\hookrightarrow \C/(\Z \oplus i\Z)$ and the second factor by the class $b'\in H^1(E(\C),\Z)$ of the loop $i\R/i\Z\hookrightarrow \C/(\Z \oplus i\Z)$. We note that $a'\cdot b'\in H^2(E(\C),\Z)$ has degree $1$. 

By work of Krasnov, the Hochschild--Serre spectral sequences
$$E_2^{s,t}=H^s(G,H^t(E(\C),\Z(k))\implies H^{s+t}_G(E(\C),\Z(k))$$
degenerate (combine \cite[\S 5.1, Proposition 3.6, Corollary 3.4]{KraHarnack} and \cite[Remark 1.2]{KraGMZ}). 
It follows that there exist $a\in H^1_G(E(\C),\Z)$ and $b\in H^1_G(E(\C),\Z(1))$ lifting $a'$ and $b'$. By Krasnov's real Lefschetz $(1,1)$ theorem (see for instance \mbox{\cite[IV, Theorem 4.1]{vanHamelthesis}}), the class $a\cdot b\in H^2_G(E(\C),\Z(1))$ is the cycle class of a divisor $D$ on $E$. As $a'\cdot b'$ has degree $1$, the divisor $D$ has degree $1$. The Riemann--Roch theorem applied on the elliptic curve $E$ now shows that $D$ is effective, hence rationally equivalent to a point $P\in E(\R)$.
\end{proof}

Let $Q:=\{\sum_{i=0}^{4} X_i^2=0\}\subset\P^{4}_{\R}$ be the $3$-dimensional anisotropic quadric.  
We consider the smooth projective real algebraic fourfold $X:=E\times_{\R} Q^3$. 

\begin{prop}
\label{realtorsionclass}
There is a class $x\in \Ker\big(H^4_G(X(\C),\Z(2))\to H^4(X(\C),\Z(2))\big)$ such that $\Sq^3(\rho(x))+\varpi\cdot\Sq^2(\rho(x))+\varpi^3\cdot\rho(x)\neq 0$ in $H^7_G(X(\C),\Z/2)$.
\end{prop}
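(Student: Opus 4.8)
The plan is to take
$$x := \omega_\Z^3\cdot p_1^*(b)\ \in\ H^4_G(X(\C),\Z(2)),$$
where $b\in H^1_G(E(\C),\Z(1))$ is the class furnished by Proposition \ref{realce}, $p_1\colon X\to E$ is the first projection, and Tate twists are identified modulo $2$ (so that $\omega_\Z^3\cdot p_1^*b\in H^4_G(X(\C),\Z(4))$ is viewed in $H^4_G(X(\C),\Z(2))$). That $x$ lies in the stated kernel is immediate: $\omega_\Z$ maps to $0$ in $H^1(X(\C),\Z)$ — it is the image of $-1$ under the Kummer boundary map, and $-1$ is a square over $\C$ — hence $x$ maps to $0$ in $H^4(X(\C),\Z(2))$.

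Next I would compute the Steenrod operations on $\rho(x)=\omega^3\cdot p_1^*\rho(b)$, where $\omega=\omega_{\Z/2}=\rho(\omega_\Z)$. One has $\Sq(\omega)=\omega+\omega^2$. Reducing $b$ modulo $4$ exhibits $\rho(b)$ as the reduction of a class with $\bmu_4$-coefficients (over $\R$ one has $\Z/4(1)\simeq\bmu_4$), so Lemma \ref{calculBock} — transported to $G$-equivariant Betti cohomology through the $\cA_2$-equivariant comparison isomorphism recalled in \S\ref{Steenrodetale} — gives $\Sq^1\rho(b)=\beta\rho(b)=-\,\omega\cdot\rho(b)=\omega\cdot\rho(b)$; hence $\Sq(p_1^*\rho(b))=(1+\omega)\,p_1^*\rho(b)$. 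The Cartan formula then yields
$$\Sq(\rho(x))=(\omega+\omega^2)^3(1+\omega)\,p_1^*\rho(b)=\omega^3(1+\omega)^4\,p_1^*\rho(b)=(\omega^3+\omega^7)\,p_1^*\rho(b),$$
so that $\Sq^1\rho(x)=\Sq^2\rho(x)=\Sq^3\rho(x)=0$, and therefore $\Sq^3(\rho(x))+\omega\cdot\Sq^2(\rho(x))+\omega^3\cdot\rho(x)=\omega^6\cdot p_1^*\rho(b)$.

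It then remains to show that $\omega^6\cdot p_1^*\rho(b)$ is nonzero in $H^7_G(X(\C),\Z/2)$, and this is where the real work lies. Since $Q^3(\R)=\varnothing$, the group $G$ acts freely on $Q^3(\C)$, hence on $X(\C)=E(\C)\times Q^3(\C)$, so $p_2$ exhibits $X(\C)/G$ as a fibre bundle over $Q^3(\C)/G$ with fibre $E(\C)$. I would argue that its mod $2$ Serre spectral sequence has trivial coefficient systems — the monodromy $\pi_1(Q^3(\C)/G)=\Z/2$ acts on $H^*(E(\C),\Z/2)$ through complex conjugation, which acts by $\pm 1$ on each $H^j(E(\C),\Z)$ and hence trivially mod $2$ — and that it degenerates at $E_2$: the classes $p_1^*\rho(a)$ and $p_1^*\rho(b)$ restrict to a basis of $H^1(E(\C),\Z/2)$ on the fibre, so they are permanent cocycles, and since $H^*(E(\C),\Z/2)$ is generated in degree $1$ all differentials vanish by multiplicativity. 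Consequently $H^6_G(Q^3(\C),\Z/2)\otimes H^1(E(\C),\Z/2)$ is a subquotient of $H^7_G(X(\C),\Z/2)$, in which $\omega^6\cdot p_1^*\rho(b)$ has image $\omega^6\otimes\rho(b')$; this is nonzero because $\omega^6\neq 0$ in $H^6_G(Q^3(\C),\Z/2)$ by Proposition \ref{Pfister} applied with $n=2$, and $\rho(b')\neq 0$ in $H^1(E(\C),\Z/2)$ since $b'$ is not divisible by $2$ (Proposition \ref{realce} and its proof). The hard part is precisely this last step: obtaining enough control of the ring $H^*_G(X(\C),\Z/2)$ — via the free $G$-action on $Q^3(\C)$ together with Proposition \ref{Pfister} — to certify that the class output by the formal Steenrod computation does not vanish.
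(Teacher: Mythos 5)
Your class $x=\omega_{\Z}^3\cdot p_1^*b$ and your Cartan-formula computation reducing the statement to the non-vanishing of $\omega^6\cdot p_1^*\rho(b)$ are exactly those of the paper (which justifies $\Sq^1(\omega)=\omega^2$ and $\Sq^1(p_1^*\rho(b))=\omega\cdot p_1^*\rho(b)$ by the same lifting-to-$\Z(1)$ mechanism you invoke via Lemma \ref{calculBock}); where you genuinely diverge is the certification that $\omega^6\cdot p_1^*\rho(b)\neq 0$. The paper multiplies by $p_1^*\rho(a)$, uses the identity $a\cdot b=\cl(P)$ from Proposition \ref{realce}, and pushes forward along the projection $g\colon X\to Q^3$ via the projection formula, so that the non-vanishing reduces in one line to $\omega^6\neq 0$ in $H^6_G(Q^3(\C),\Z/2)$ (Proposition \ref{Pfister}). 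You instead exploit the freeness of the $G$-action on $Q^3(\C)$ (as $Q^3(\R)=\varnothing$) to identify equivariant cohomology with the cohomology of the quotient, and run the mod $2$ Serre spectral sequence of the bundle $X(\C)/G\to Q^3(\C)/G$ with fibre $E(\C)$: trivial monodromy mod $2$, degeneration by Leray--Hirsch since $p_1^*\rho(a)$ and $p_1^*\rho(b)$ restrict to a basis of $H^1(E(\C),\Z/2)$ (which indeed follows from Proposition \ref{realce}, since $a\cdot b$ restricts to a degree-one class, forcing $a',b'$ to be an integral basis), and then reading off $\omega^6\otimes\rho(b')\neq 0$ in $E_\infty^{6,1}$, again via Proposition \ref{Pfister}. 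Both routes are correct and both ultimately rest on Propositions \ref{realce} and \ref{Pfister}; the paper's pushforward argument is shorter and avoids spectral sequences, while yours is a bit heavier but yields more, namely a full Leray--Hirsch decomposition of $H^*_G(X(\C),\Z/2)$ as a free module over $H^*_G(Q^3(\C),\Z/2)$ on $1$, $p_1^*\rho(a)$, $p_1^*\rho(b)$, $p_1^*\rho(a)p_1^*\rho(b)$, and it makes transparent that only the mod $2$ independence of $a',b'$ (not the algebraicity of $a\cdot b$ as the class of a real point) is needed at this stage.
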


\begin{proof}
Let $a$, $b$ and $P$ be as in Proposition \ref{realce}.
 Let $f:X\to E$ and $g:X\to Q$ be the projections. Set $x:=f^*b\cdot\omega^3\in H^4_G(X(\C),\Z(4))$. View it as an element of $H^4_G(X(\C),\Z(2))$ using an isomorphism of $G$-modules $\Z(4)\simeq \Z(2)$. 
The image of $x$ in $H^4(X(\C),\Z(2))$ vanishes because so does the image of $\omega$ in $H^1(X(\C),\Z(1))$.

As the elements $\varpi$ and $f^*\rho(b)$ of $H^1_G(X(\C),\Z/2)$ lift to $H^1_G(X(\C),\Z(1))$ and as~$\Sq^1$ is the Bockstein, one has $\Sq^1(\varpi)=\varpi^2$ and $\Sq^1(f^*\rho(b))=f^*\rho(b)\cdot\varpi$ (see for instance \cite[\S 2.2]{periodindex}).
Since $\rho(x)=f^*\rho(b)\cdot\varpi^3$ in $H^4_G(X(\C),\Z/2)$, the Cartan formula implies that
\begin{equation*}
\label{computaSq}
\Sq^3(\rho(x))+\varpi\cdot\Sq^2(\rho(x))+\varpi^3\cdot\rho(x)= 
f^*\rho(b)\cdot\varpi^6
\end{equation*}
 in $H^7_G(X(\C),\Z/2)$.
This class is non-zero because the element 
$$g_*(f^*\rho(a)\cdot f^*\rho(b)\cdot\varpi^6)=g_*(f^*\rho(\cl(P))\cdot\varpi^6)=\rho(g_*f^*\cl(P))\cdot\varpi^6=\varpi^6$$
of $H^6_G(Q(\C),\Z/2)$ is non-zero by \cite[Proposition 6.1]{Wu}.
\end{proof}

We now reach the goal of this paragraph.

\begin{thm}
\label{threal}
There exists a smooth projective variety $X$ of dimension $4$ over~$\R$ such that $X(\R)=\varnothing$,
and a non-algebraic class 
$$x\in \Ker\big(H^4_G(X(\C),\Z(2))\to H^4(X(\C),\Z(2))\big).$$
\end{thm}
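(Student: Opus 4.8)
The plan is to take $X:=E\times_{\R}Q^3$ together with the class $x$ constructed in Proposition~\ref{realtorsionclass}, and to deduce the non-algebraicity of $x$ from the obstruction of Proposition~\ref{weirdobstr}~(ii). First I would record the elementary properties of $X$: it is smooth and projective of dimension $1+3=4$ over $\R$, and $X(\R)=E(\R)\times Q^3(\R)=\varnothing$ because the anisotropic quadric $Q^3=\{\sum_{i=0}^{4}X_i^2=0\}\subset\P^4_{\R}$ has no real points. By Proposition~\ref{realtorsionclass}, the class $x\in H^4_G(X(\C),\Z(2))$ already lies in $\Ker\big(H^4_G(X(\C),\Z(2))\to H^4(X(\C),\Z(2))\big)$, so all that remains is to show that $x$ is not algebraic.

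Suppose for contradiction that $x=\cl(z)$ for some $z\in\CH^2(X)$, with $\cl$ the cycle class map~(\ref{eqccm}). I would then pass through the $\cA_2$-equivariant comparison isomorphism $H^*_{\et}(X,\Z/2)\isoto H^*_G(X(\C),\Z/2)$ of \cite[Corollary~15.3.1]{Scheidererbook}: under it the two incarnations of $\omega$ correspond, as recalled before Proposition~\ref{realce}, and the cycle class map~(\ref{eqccm}) is compatible with the mod~$2$ \'etale cycle class map, so that $\rho(x)$ is identified with an element of $H^4_{\et}(X,\bmu_2^{\otimes 2})_{\alg}$. Proposition~\ref{weirdobstr}~(ii), applied with $\ell=2$ and $c=2$ (whence $(c+1)\equiv 1$ and $\tfrac{c(c-1)}{2}\equiv 1$ modulo $2$), shows that $\Sq^3+\omega\cdot\Sq^2+\omega^3$ annihilates $\rho(x)$, that is,
\[
\Sq^3(\rho(x))+\omega\cdot\Sq^2(\rho(x))+\omega^3\cdot\rho(x)=0.
\]
Transported back to $G$-equivariant Betti cohomology, this contradicts the non-vanishing asserted in Proposition~\ref{realtorsionclass}. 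Hence $x$ is not algebraic, and $(X,x)$ has all the required properties; since the image of $x$ in $H^4(X(\C),\Z(2))$ vanishes, $X$ moreover gives a counterexample to the real integral Hodge conjecture that is not explained by a failure of the complex one (see Corollary~\ref{corIHCreal}).

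Given Propositions~\ref{realtorsionclass} and~\ref{weirdobstr}, this argument is essentially an assembly, so the main ``obstacle'' is merely bookkeeping: one must check carefully that the Steenrod operations, the class $\omega$ together with the relevant Tate twists (all of which are trivial mod~$2$), and the notion of algebraic class all match across the comparison isomorphism between $G$-equivariant Betti cohomology and mod~$2$ \'etale cohomology. All of this has been set up in \S\ref{Steenrodetale} and in the discussion preceding Proposition~\ref{realce}, so no new computation is needed beyond what the cited propositions already provide.
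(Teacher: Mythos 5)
Your proposal is correct and follows essentially the same route as the paper: take $X=E\times_{\R}Q^3$ with the class $x$ of Proposition~\ref{realtorsionclass}, transport $\rho(x)$ through the $\cA_2$-equivariant comparison isomorphism of \cite[Corollary~15.3.1]{Scheidererbook}, and apply Proposition~\ref{weirdobstr}~(ii) with $c=2$. The only point the paper treats more carefully is the step you call bookkeeping, namely the compatibility of the cycle class map~(\ref{eqccm}) with the mod~$2$ \'etale cycle class map, for which no published reference is known; the paper therefore also offers the alternative of characterizing algebraic degree~$2c$ classes as those of coniveau $\geq c$ via purity, in both \'etale and equivariant Betti cohomology.
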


\begin{proof}
Let $X$ be as above and let $x\in H^4_G(X(\C),\Z(2))$ be as in Proposition \ref{realtorsionclass}. One has $X(\R)=\varnothing$ because $Q^3$ is anisotropic. The inverse image $y\in H^4_{\et}(X,\bmu_2^{\otimes 2})$ of~$\rho(x)$ by the comparison isomorphism $H^4_{\et}(X,\bmu_2^{\otimes 2})\isoto H^4_G(X(\C),\Z/2(2))$ of \cite[Corollary 15.3.1]{Scheidererbook} (which is $\cA_2$-equivariant, see \S\ref{Steenrodetale}) is not algebraic by Propositions \ref{weirdobstr}~(ii) and~\ref{realtorsionclass}. As a consequence, neither is $x$. (This last assertion follows from the compatibility of cycle class maps in \'etale and equivariant Betti cohomology. This result holds true, but we know no published reference. Alternatively, one can use the fact that the degree $2c$ classes that are algebraic are exactly those of coniveau $\geq c$, both in \'etale and equivariant Betti cohomology, by purity (for which see \cite[XIX, Th\'eor\`emes 3.2 et 3.4]{SGA43} and \cite[(1.21)]{BW1}).)
\end{proof}


If $X$ is a smooth projective variety over $\R$, the real integral Hodge conjecture for codimension~$c$ cycles on $X$, defined in \cite[Definition 2.2]{BW1}, is the statement that all the classes of $H^{2c}_G(X(\C),\Z(c))$ whose images in $H^{2c}(X(\C),\Z)$ are Hodge, and that satisfy a topological condition described in \cite[Definition 1.19]{BW1} (and which is always satisfied if $X(\R)=\varnothing$), are algebraic.

\begin{cor}
\label{corIHCreal}
There exists a smooth projective variety $X$ of dimension $4$ over~$\R$ such that $X$ fails the real integral Hodge conjecture but $X_{\C}$ satisfies the usual (complex) integral Hodge conjecture.
\end{cor}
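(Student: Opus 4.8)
The plan is to take for $X$ the same smooth projective real fourfold $X=E\times_{\R}Q^3$ used in Theorem~\ref{threal}, and to verify the two assertions separately.

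\emph{Failure of the real integral Hodge conjecture.} The class $x\in\Ker\big(H^4_G(X(\C),\Z(2))\to H^4(X(\C),\Z(2))\big)$ provided by Theorem~\ref{threal} has image $0$ in $H^4(X(\C),\Z(2))$, which is in particular a Hodge class; since $X(\R)=\varnothing$, the topological condition of \cite[Definition~1.19]{BW1} is automatically satisfied (as recalled in the paragraph introducing the real integral Hodge conjecture); and $x$ is not algebraic by Theorem~\ref{threal}. Thus $x$ witnesses the failure of the real integral Hodge conjecture for codimension~$2$ cycles on $X$.

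\emph{The complex integral Hodge conjecture on $X_{\C}$.} Here I would prove the stronger statement that \emph{every} class in $H^*(X_{\C},\Z)$ is algebraic, for $X_{\C}=E_{\C}\times_{\C}Q^3_{\C}$. The smooth quadric threefold $Q^3_{\C}$ has an affine cell decomposition, so $H^*(Q^3_{\C},\Z)$ is torsion-free, concentrated in even degrees, and generated by classes of algebraic cycles; in particular $H^{\mathrm{odd}}(Q^3_{\C},\Z)=0$. Since $H^*(E_{\C},\Z)$ is also torsion-free, the Künneth formula gives $H^*(X_{\C},\Z)=H^*(E_{\C},\Z)\otimes_{\Z}H^*(Q^3_{\C},\Z)$ with no Tor terms, and for every $c$ one gets $H^{2c}(X_{\C},\Z)=\big(H^0(E_{\C},\Z)\otimes H^{2c}(Q^3_{\C},\Z)\big)\oplus\big(H^2(E_{\C},\Z)\otimes H^{2c-2}(Q^3_{\C},\Z)\big)$ (the mixed term $H^1(E_{\C},\Z)\otimes H^{2c-1}(Q^3_{\C},\Z)$ vanishing because $Q^3_{\C}$ has no odd cohomology). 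Each summand is spanned by external products $\alpha\times\beta$ of classes of algebraic cycles on $E_{\C}$ (the fundamental class of $E_{\C}$ or of a point) and on $Q^3_{\C}$; since cycle class maps are compatible with external products, every such class is algebraic on $X_{\C}$. Hence all Hodge classes on $X_{\C}$ are algebraic, and $X_{\C}$ satisfies the integral Hodge conjecture in every codimension.

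I do not expect a serious obstacle: the only points requiring care are recalling the torsion-free, even-degree structure of $H^*(Q^3_{\C},\Z)$ together with its algebraicity, checking that the Künneth decomposition in degree $2c$ receives no contribution involving $H^1(E_{\C},\Z)$, and invoking the standard compatibility of cycle class maps with external products; the vacuity of the topological condition of \cite{BW1} when $X(\R)=\varnothing$ is already noted in the excerpt.
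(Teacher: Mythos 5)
Your proposal is correct and follows essentially the same route as the paper: use the non-algebraic class $x$ of Theorem~\ref{threal} (Hodge since it vanishes in $H^4(X(\C),\Z(2))$, topological condition automatic as $X(\R)=\varnothing$) to contradict the real integral Hodge conjecture, and deduce the complex statement from the K\"unneth formula since $H^*(Q^3(\C),\Z)$ is torsion-free and algebraic and $H^{2*}(E(\C),\Z)$ is algebraic. Your write-up merely spells out the K\"unneth step in more detail than the paper does.
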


\begin{proof}
Let $X$ and $x$ be as in Theorem \ref{threal}. The class $x\in H^4_G(X(\C),\Z(2))$ is Hodge because it vanishes in $H^4(X(\C),\Z(2))$. It satisfies the topological condition appearing in the definition of the real integral Hodge conjecture because ${X(\R)=\varnothing}$.  As $x$ is not algebraic, the variety $X$ does not satisfy the real integral Hodge conjecture.

Finally, as $H^*(Q^3(\C),\Z)$ is algebraic and torsion-free and $H^{2*}(E(\C),\Z)$ is algebraic, the K\"unneth formula implies that $H^{2*}(X(\C),\Z)$ is algebraic, hence that $X_{\C}$ satisfies the integral Hodge conjecture.
\end{proof}

\begin{rems}
\label{remIHCR}
(i)
Three-dimensional counterexamples to the real integral Hodge conjecture are constructed in \cite[Examples 4.7 and 4.8]{BW1}. This is the smallest dimension possible by \cite[Propositions 2.8 and 2.10]{BW1}.

(ii)
The counterexamples to the real integral Hodge conjecture mentioned in~(i) are explained by a failure of the complex integral Hodge conjecture. A $7$\nobreakdash-dimensional counterexample to the real integral Hodge conjecture for which the complex integral Hodge conjecture holds is described in \cite[Example 2.5]{BW1}. The fourfold of Theorem \ref{threal} is the first counterexample of this kind in dimension $\leq 6$. We do not know if some exist in dimension $3$ (a question closely related to  
\cite[Question~4.9]{BW1}).

(iii)
Over the non-archimedean real closed field $R:=\cup_n\R((t^{1/n}))$, there exist smooth projective threefolds failing the (analogue in this context of the) real integral Hodge conjecture, but for which the (analogue of the) complex integral Hodge conjecture holds, see \cite[Examples 9.5, 9.6, 9.7, 9.10 and 9.22]{BW2}.
\end{rems}

\subsection{Applications to unramified cohomology} 
\label{unramified}

Let $X$ be a smooth variety over a field $k$. Let $\ell$ be a prime number invertible in $k$, and let $$\cl:\CH^2(X)\otimes_{\Z}\Z_{\ell}\to H^4_{\cont}(X,\Z_{\ell}(2))$$
be Jannsen's cycle class map to continuous \'etale cohomology (see \cite{Jannsen}). (For the concrete fields considered in this article, such as algebraically closed fields, real closed fields, or finite fields, the continuous \'etale cohomology group 
$H^4_{\cont}(X,\Z_{\ell}(2))$ coincides with the $\ell$-adic cohomology group $H^4_{\et}(X,\Z_{\ell}(2)):=\varprojlim_m H^4_{\et}(X,\bmu_{\ell^m}^{\otimes 2})$; see \cite[Remark~3.5~c)]{Jannsen}.)

As was discovered by Colliot-Th\'el\`ene and Voisin over the complex numbers \cite[Th\'eor\`eme~3.7]{CTV}, and extended to arbitrary fields by Kahn \cite[Th\'eo\-r\`eme~1.1]{Kahnetale}, the torsion subgroup~$T$ of the cokernel of $\cl$ is a quotient of the third unramified cohomology group  $H^3_{\nr}(X,\Q/\Z(2))$. It follows from the proofs of Theorems \ref{th1b}, \ref{th2b}, \ref{finitethm} and \ref{threal} that the smooth projective varieties $X$ considered there satisfy $T\neq 0$. As a consequence, one has $H^3_{\nr}(X,\Q/\Z(2))\neq 0$ for any of these varieties.

\section{Algebraizability of cohomology classes of \texorpdfstring{$\ci$}{smooth} manifolds}

In this section,  we study the algebraizability of cohomology classes of compact~$\ci$ manifolds. We refer to \S\ref{realintro} for context and for the relevant definitions.

\subsection{Steenrod operations and algebraizable classes}

Our main results (Theorems \ref{propnonalgebraizable} and \ref{cexSWsub} below) both rely on the fact that Steenrod operations preserve algebraizable classes.
This statement follows from the next proposition, which is due to Akbulut and King \cite[Theorem~6.6]{AK} at least when $X$ is projective. 
For the convenience of the reader, we give a short proof based on the relative Wu theorem of Atiyah and Hirzebruch \cite[Satz 3.2]{AHOp}. Our argument is also closely related to \cite[Proposition 5.19]{BH}.

\begin{prop}
\label{BHSteen}
Let $X$ be a smooth variety over $\R$.  Then $H^*_{\alg}(X(\R),\Z/2)$ is stable under mod $2$ Steenrod operations.
\end{prop}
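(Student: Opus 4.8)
The plan is to reduce the statement to a relative Wu theorem for the Borel--Haefliger cycle class map, in exact parallel with the proof of Theorem~\ref{compatibility}. First I would set up the basic tools: the Borel--Haefliger cycle class map $\cl_{\R}:\CH^c(X)\to H^c(X(\R),\Z/2)$, the fact (via Gabber's refinement of de~Jong alterations applied over the perfect field $\R$, exactly as in the proof of Theorem~\ref{compatibility}) that it suffices to treat classes of the form $x=f_*[Y]$ for $f:Y\to X$ projective with $Y$ smooth over $\R$, and the compatibility of $\cl_{\R}$ with proper pushforwards. Here the key auxiliary input is an analogue of Theorem~\ref{thWu} for $\cl_{\R}$ on the cohomology of the real locus: for $f:Y\to X$ proper between smooth $\R$-varieties with virtual normal bundle $N_f$, one should have $\Sq(\cl_{\R}(f_*x)) = \cl_{\R}(f_*(\Sq(x)\cdot w(N_f)))$ where $\cl_{\R}$ is extended multiplicatively and $w$ is Brosnan's characteristic class, \emph{or} equivalently the topological relative Wu theorem of Atiyah--Hirzebruch \cite{AHOp} applied to the proper smooth map $f_{\R}:Y(\R)\to X(\R)$ of manifolds, after identifying $\cl_{\R}(N_f)$ with (the total Stiefel--Whitney class twisted appropriately of) the topological normal bundle of $f_{\R}$.

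The heart of the argument is the following computation, to be carried out for $x\in\CH^c(X)/2$ written as $f_*[Y]$: on the one hand $\Sq(\cl_{\R}(x)) = \Sq(f_{\R,*}(1)) = f_{\R,*}(w^{\mathrm{top}}(N_{f_{\R}}))$ by the Atiyah--Hirzebruch relative Wu theorem; on the other hand $\cl_{\R}(\Sq(x)) = \cl_{\R}(f_*(\Sq([Y])\cdot w(N_f))) = \cl_{\R}(f_* w(N_f)) = f_{\R,*}(\cl_{\R}(w(N_f)))$ by Brosnan's relative Wu theorem (Proposition~\ref{WuChow}) and compatibility of $\cl_{\R}$ with pushforward. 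So the whole statement comes down to the identity $w^{\mathrm{top}}(N_{f_{\R}}) = \cl_{\R}(w(N_f))$ of characteristic classes in $H^*(Y(\R),\Z/2)$, where $w$ on the Chow side is $\prod_i(1+\lambda_i)$ with $\lambda_i$ the mod $2$ Chern roots (so $w$ agrees with the total Stiefel--Whitney class under realification, cf.\ (\ref{wBro}) for $\ell=2$). By the splitting principle on both sides and multiplicativity (Lemma~\ref{mult} and its Chow analogue) this reduces to the case of a line bundle $L$ on $Y$, where one needs $w^{\mathrm{top}}(L_{\R}) = \cl_{\R}(1+c_1(L)) = 1 + \cl_{\R}(c_1(L))$; and $\cl_{\R}(c_1(L))$ is precisely $w_1$ of the underlying real line bundle $L(\R)$ of the holomorphic-to-$\R$ realification, which is the content of the standard compatibility of the Borel--Haefliger class with first Chern classes (see \cite{BH}).

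The step I expect to be the main obstacle is making the comparison $w^{\mathrm{top}}(N_{f_{\R}}) = \cl_{\R}(w(N_f))$ fully rigorous --- that is, identifying the virtual normal bundle $N_f$ in $K_0(Y)$ with the topological virtual normal bundle of the smooth manifold map $f_{\R}$ under the realification functor, and checking that Brosnan's $w$-class of an algebraic $K$-theory class pushes to the topological $w$-class of its realification under $\cl_{\R}$. This is where one must be careful that there is no twist by a class like $\omega$ (contrast with Lemma~\ref{wet}): over $\R$ the real locus is a manifold and the relevant Thom classes live in honest $\Z/2$-cohomology of $X(\R)$, so no $\bmu$-twist intervenes, and the Chow-to-$H^*(X(\R),\Z/2)$ realization matches the one in Atiyah--Hirzebruch. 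Alternatively, and perhaps more cleanly, I would avoid re-deriving a Wu theorem and instead argue directly: given $x = f_*[Y]$ with $Y$ smooth, $\cl_{\R}(x) = f_{\R,*}[Y(\R)]$ is the Gysin pushforward of the fundamental class, and Atiyah--Hirzebruch's relative Wu theorem on the smooth manifold map $f_{\R}$ computes $\Sq^j(f_{\R,*}[Y(\R)])$ as a pushforward of Stiefel--Whitney classes of $f_{\R}$, all of which are themselves algebraizable (being Stiefel--Whitney classes of algebraic real vector bundles, hence in $H^*_{\alg}$ by the results of \cite{BT} quoted in \S\ref{realintro}), and pushforwards of algebraic classes are algebraic; hence $\Sq^j(\cl_{\R}(x))\in H^*_{\alg}(X(\R),\Z/2)$. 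This second route sidesteps the characteristic-class bookkeeping at the cost of invoking algebraizability of Stiefel--Whitney classes, and is likely the shortest path to a clean proof.
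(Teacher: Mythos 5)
Your second, preferred route is essentially the paper's own proof: the paper resolves $Z$ by Hironaka, applies the Atiyah--Hirzebruch relative Wu theorem \cite[Satz 3.2]{AHOp} to $f(\R):Y(\R)\to X(\R)$ and the class $1$, and concludes because $w(T_{X(\R)})$ and $w(T_{Y(\R)})$ are algebraic and algebraic classes are stable under pull-back, cup-product and proper pushforward. The only caveat is the reference: the algebraicity of these Stiefel--Whitney classes on the \emph{given} variety is not \cite{BT} (which gives algebraizability on some model) but \cite[Theorem 1.5]{BKHomology} in the projective case and \cite{Kahn} combined with \cite[Theorem 1.18]{BW1} in general; your first route, via Brosnan's Wu theorem and the identity $w^{\mathrm{top}}(N_{f_{\R}})=\cl_{\R}(w(N_f))$, is a correct but unnecessary strengthening.
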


\begin{proof}
Let $g:Z\hookrightarrow X$ be an integral subvariety of codimension $c$ of $X$ and let $\nu:Y\to Z$ be a resolution of singularities of $Z$ (see \cite{Hironaka}). Set $f:=g\circ\nu$ and denote by $f(\R):Y(\R)\to X(\R)$ the map induced between real point sets. Let $w(N_{f(\R)})$ be the total Stiefel--Whitney class of the virtual normal bundle ${N_{f(\R)}:=f^*T_{X(\R)}-T_{Y(\R)}}$ of~$f(\R)$.
The relative Wu theorem of Atiyah and Hirzebruch \cite[Satz 3.2]{AHOp} applied to the map $f(\R)$ and to the class $1\in H^0(Y(\R),\Z/2)$ (exactly as in \cite[(1.60) and Remark~1.6 (i)]{BW1}) implies that 
\begin{equation}
\label{relativeWu}
\Sq(f(\R)_*1)=f(\R)_*w(N_{f(\R)}).
\end{equation}

One has $w(T_{X(\R)})\in H^*_{\alg}(X(\R),\Z/2)$ and $w(T_{Y(\R)})\in H^*_{\alg}(Y(\R),\Z/2)$ (see \cite[Theorem 1.5]{BKHomology} when $X$ and~$Y$ are projective; in general, one may combine \cite[Theorem 4]{Kahn} and \cite[Theorem 1.18]{BW1}). As algebraic classes are stable under pull-backs,  proper pushforwards and cup-product (see \cite[Theorems~1.3 and~1.4]{BKHomology} under projectivity hypotheses and \cite[\S 1.6.2]{BW1} in general), the class~(\ref{relativeWu}) is algebraic. Since $H^*_{\alg}(X(\R),\Z/2)$ is generated by classes of the form $\cl_{\R}(Z)=f(\R)_*1$, the proof is now complete.
\end{proof}

Proposition \ref{BHSteen} implies at once the following corollary.

\begin{cor}
\label{BHSteencor} 
Let $M$ be a compact $\ci$ manifold. The subset of $H^*(M,\Z/2)$ consisting of algebraizable classes is stable under mod $2$ Steenrod operations.
\end{cor}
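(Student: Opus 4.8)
The statement in question is Corollary \ref{BHSteencor}, which deduces the Steenrod-stability of algebraizable cohomology classes of a compact $\ci$ manifold $M$ from Proposition \ref{BHSteen} (the Steenrod-stability of $H^*_{\alg}(X(\R),\Z/2)$ for a smooth real variety $X$).

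The plan is to unwind the definition of ``algebraizable'' and invoke functoriality of Steenrod operations. First I would recall that a class $x\in H^c(M,\Z/2)$ is algebraizable precisely when there is an algebraic model $(X,\chi)$ of $M$ — that is, a smooth projective real variety $X$ with a diffeomorphism $\chi\colon M\isoto X(\R)$ — such that $x\in\chi^*H^*_{\alg}(X(\R),\Z/2)$. So write $x=\chi^*(x')$ with $x'\in H^c_{\alg}(X(\R),\Z/2)$. Now apply any mod $2$ Steenrod operation $\theta\in\cA_2$: since $\chi^*$ is induced by a continuous map (a diffeomorphism, in fact), it commutes with the action of $\cA_2$ on singular mod $2$ cohomology, so $\theta(x)=\theta(\chi^*x')=\chi^*(\theta(x'))$. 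By Proposition \ref{BHSteen}, $\theta(x')\in H^{c+\deg\theta}_{\alg}(X(\R),\Z/2)$, and hence $\theta(x)=\chi^*(\theta(x'))$ lies in $\chi^*H^*_{\alg}(X(\R),\Z/2)$, which means $\theta(x)$ is algebraizable, witnessed by the very same model $(X,\chi)$. Since $\cA_2$ is generated by the Steenrod squares $\Sq^i$, and the set of algebraizable classes is closed under addition (two algebraizable classes $x_1=\chi_1^*x_1'$, $x_2=\chi_2^*x_2'$ need a common model, but one can pass to a single model for both by the standard fact that any finitely many algebraizable classes can be algebraized simultaneously — or, since the statement only concerns the action of a single homogeneous operation and the total operation $\Sq$ decomposes into its homogeneous pieces, one can simply observe that each $\Sq^i$ individually preserves algebraizability and that is all that is needed), this proves the corollary.

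There is essentially no obstacle here: the only point requiring a moment's care is the reconciliation of the two possible phrasings of ``stable under $\cA_2$'' — whether one means each generator $\Sq^i$ preserves the algebraizable classes, or the whole subring generated by them together with all operations is algebraizable. The cleanest route is to note that for a fixed algebraic model $(X,\chi)$, the subspace $\chi^*H^*_{\alg}(X(\R),\Z/2)$ is, by Proposition \ref{BHSteen} and $\cA_2$-equivariance of $\chi^*$, an $\cA_2$-submodule of $H^*(M,\Z/2)$; since the union of these submodules over all algebraic models is exactly the set of algebraizable classes, and each $\Sq^i$ maps each such submodule into itself, every $\Sq^i$ maps the set of algebraizable classes into itself. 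That is the assertion.

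Concretely, here is the proof. Let $x\in H^c(M,\Z/2)$ be algebraizable and let $\theta\in\cA_2$. By definition there is an algebraic model $(X,\chi)$ of $M$ and a class $x'\in H^c_{\alg}(X(\R),\Z/2)$ with $x=\chi^*x'$. As the Steenrod operations are functorial for continuous maps, $\theta(x)=\theta(\chi^*x')=\chi^*(\theta(x'))$. By Proposition \ref{BHSteen}, the class $\theta(x')$ belongs to $H^{*}_{\alg}(X(\R),\Z/2)$, so $\theta(x)=\chi^*(\theta(x'))\in\chi^*H^{*}_{\alg}(X(\R),\Z/2)$ is algebraizable, with the same model $(X,\chi)$ serving as a witness. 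Hence the subset of algebraizable classes of $H^*(M,\Z/2)$ is stable under the action of $\cA_2$, i.e.\ under all mod $2$ Steenrod operations.
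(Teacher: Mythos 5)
Your proof is correct and is exactly the argument the paper intends: the paper deduces the corollary immediately from Proposition \ref{BHSteen} via the $\cA_2$-equivariance of $\chi^*$ for a fixed algebraic model, just as you do. (Your aside about simultaneous algebraization of several classes is unnecessary, as you yourself note, since a single model witnesses both $x$ and $\theta(x)$.)
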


\subsection{Examples of non-algebraizable classes}

Here is our first main result.

\begin{thm}
\label{propnonalgebraizable}
For all $c\geq 2$, there exists a compact $\ci$ manifold $M$ and a class $x\in H^c(M,\Z/2)$ that is not algebraizable.
\end{thm}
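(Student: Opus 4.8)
The plan is to produce, for each $c \geq 2$, a compact manifold $M$ carrying a mod $2$ class $x$ in degree $c$ such that $\Sq^j(x)$ is provably non-algebraizable for some $j$, whence $x$ itself cannot be algebraizable by Corollary~\ref{BHSteencor}. Kucharz \cite{Kuchomo} produces non-algebraizable classes of every even degree $\geq 2$; the content here is to bootstrap these into non-algebraizable classes of \emph{every} degree $\geq 2$, including odd ones, by moving backwards along a Steenrod operation. So the first step is to recall precisely what Kucharz's construction gives: a compact $\ci$ manifold $N$ (concretely, a suitable product or connected sum built from real projective spaces, or a Grassmannian-type model) together with a non-algebraizable class $z \in H^{2e}(N,\Z/2)$ for a given even degree $2e$, the non-algebraizability being detected by the fact that $z$ fails to lie in the subgroup of $H^{2e}(N,\Z/2)$ killed by some explicit topological obstruction valid for all algebraic models — in Kucharz's case, a congruence/divisibility obstruction coming from the behaviour of $\cl_{\R}$ together with resolution of singularities.

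The core step: given a target degree $c$, choose $j \geq 1$ with $c + j = 2e$ even (take $j=1$ if $c$ is odd, $j=2$ if $c$ is even, say), and a space whose cohomology realizes $z = \Sq^j(x)$ for some $x$ in degree $c$. The clean way to arrange this is to work in the cohomology of an Eilenberg--MacLane space or of a product of projective spaces: pick a closed manifold $M$ and a class $x \in H^c(M,\Z/2)$ such that $\Sq^j(x)$ coincides, under a suitable map $M \to N$, with the pullback of Kucharz's class $z$, or more directly, choose $M$ itself so that its cohomology ring contains $x$ with $\Sq^j(x)$ equal to (the image of) a Kucharz-type non-algebraizable class. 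For instance, with $x \in H^1(\R\P^\infty,\Z/2)$ the generator one has $\Sq^1(x) = x^2$, and products of such generators across several $\R\P^\infty$ factors, truncated to a compact manifold model, reproduce the symmetric polynomials in degree-$1$ classes that Kucharz uses; one then sets $M$ to be a compact smooth model of the relevant skeleton (a product of real projective spaces of large enough dimension, or a flag-type manifold) and $x$ to be the appropriate degree-$c$ class, arranging $\Sq^j(x)$ to be exactly Kucharz's non-algebraizable degree-$2e$ class on that model. The non-algebraizability of $x$ is then immediate: if $(X,\chi)$ were an algebraic model with $\chi^* H^*_{\alg}(X(\R),\Z/2) \ni x$, then $\Sq^j(x)$ would also lie in $\chi^* H^*_{\alg}(X(\R),\Z/2)$ by Corollary~\ref{BHSteencor}, contradicting the known non-algebraizability of $\Sq^j(x)$.

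The main obstacle — and where the real work lies — is ensuring that the chosen $x$ genuinely satisfies $\Sq^j(x) = z$ on a single \emph{compact} manifold, i.e. that one can realize both the degree-$c$ class and the Steenrod operation relating it to Kucharz's class without the cohomology of the compact model collapsing the relevant classes to zero by dimension/truncation. This is a matter of taking the ambient projective spaces (or the Grassmannian model) of sufficiently large dimension so that all classes in degrees $\leq 2e$ behave as in the corresponding infinite-dimensional classifying space, and checking that Kucharz's obstruction, which is stated for his particular manifolds, transfers to the model we use — this should follow because his obstruction is functorial for the maps in play and depends only on cohomology in a bounded range of degrees. A secondary point to verify is that $\Sq^j(x)$ is nonzero to begin with (otherwise the implication is vacuous), which again is guaranteed by choosing the model large enough, since in $H^*(\R\P^\infty,\Z/2) = \Z/2[x]$ one has $\Sq^1(x) = x^2 \neq 0$ and more generally $\Sq^j$ of the relevant monomials is a nonzero polynomial by the Cartan formula and the unstable conditions~\eqref{identiSq}. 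Once these bookkeeping issues are settled, the theorem follows by the one-line Steenrod argument above.
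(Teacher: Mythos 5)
Your top-level strategy---find a degree-$c$ class $x$ whose image under a Steenrod operation is demonstrably non-algebraizable, and conclude via the stability of algebraizable classes (Corollary~\ref{BHSteencor})---is exactly the strategy of the paper. The gap is in the execution of the crucial step: you never actually produce, on a single compact manifold, a class $x\in H^c(M,\Z/2)$ for which $\Sq^j(x)$ is non-algebraizable \emph{on that manifold}. Two concrete problems. First, the models you propose for the realization (products of real projective spaces, Grassmannian or flag-type manifolds) cannot possibly work: their standard real algebraic models have entirely algebraic mod~$2$ cohomology (powers of the hyperplane class, Schubert classes, and K\"unneth), so \emph{every} class on such a manifold is algebraizable and no obstruction of Kucharz's type can be nonzero there. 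Second, transferring non-algebraizability from Kucharz's manifold $N$ to your manifold $M$ along a map $M\to N$ is not automatic. The actual obstruction underlying \cite{Kuchomo} is the theorem of Akbulut and King \cite{AKtr} that squares of algebraizable classes are reductions of integral classes; the statement ``$z^2$ does not lift to $H^*(\,\cdot\,,\Z)$'' does not pull back along an arbitrary map, and only transfers when the map induces isomorphisms (integrally and mod~$2$) in the relevant range of degrees. Arranging such a highly connected map while simultaneously forcing the existence of a degree-$c$ class $x$ with $\Sq^j(x)$ equal to the obstructed class is precisely the nontrivial content, and your proposal defers it to an unproved claim of ``functoriality.''

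For comparison, the paper closes this gap by quoting Grant and Sz\"ucs \cite{GS}: for every $c$ they construct a compact $\ci$ manifold $M$ and a class $x\in H^c(M,\Z/2)$ such that $x^2$ (if $c$ is even), respectively $(\Sq^2\Sq^1 x)^2$ (if $c$ is odd), is not the reduction of an integral class. Combined with \cite{AKtr} this shows that $x$, respectively $\Sq^2\Sq^1 x$, is not algebraizable, and Corollary~\ref{BHSteencor} then settles the odd case exactly as in your one-line argument. Your write-up would become a proof if you either cite such a construction or carry it out yourself (e.g.\ by approximating a suitable Thom or Eilenberg--MacLane space by a closed manifold in a large range of degrees, as in Lemma~\ref{approxCW}, together with an explicit computation showing the relevant square does not lift integrally); as it stands, the step ``choose $M$ so that $\Sq^j(x)$ is exactly a Kucharz-type non-algebraizable class'' is asserted rather than proved, and the specific examples you offer for it consist entirely of algebraizable classes.
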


\begin{proof}
Grant and Sz\"ucs have constructed in \cite[Proof of Theorem 1.1 p.\,334]{GS} a compact $\ci$ manifold $M$ and a class $x\in H^c(M,\Z/2)$ such that $x^2$ is not in the image of the reduction modulo $2$ map $H^{2c}(M,\Z)\to  H^{2c}(M,\Z/2)$ (if $c$ is even), or such that $(\Sq^2\Sq^1 x)^2$ is not in the image of the reduction modulo $2$ map $H^{2c+6}(M,\Z)\to  H^{2c+6}(M,\Z/2)$ (if $c$ is odd).

Akbulut and King have shown in \cite[Theorem A (b)]{AKtr} (see also \cite[Remark 4.8]{Kraequivariant}) that squares of algebraizable classes are reductions modulo $2$ of integral classes. 
We deduce that $x$ is not algebraizable (if $c$ is even) and that $\Sq^2\Sq^1 x$ is not algebraizable (if $c$ is odd). Theorem \ref{BHSteencor} now shows that $x$ is not algebraizable even when $c$ is odd.
\end{proof}

\begin{rem}
When $c$ is even, the proof of Proposition \ref{propnonalgebraizable} is exactly the one given in \cite[Theorem 1.13]{Kuchomo}.
\end{rem}

\subsection{Examples of algebraizable classes}

Before stating our second main result, we recall the properties of Thom spaces that will be used in its proof.
Let $\BO(n)$ be the classifying space for $O(n)$, constructed as the increasing union of the Grassmannians $\Grass(n,\R^{n+m})$ of $n$-planes in $\R^{n+m}$ (for $m\geq 0$). Let $E(n)$ be the tautological rank $n$ topological real vector bundle on $\BO(n)$ ($\BO(n)$ and $E(n)$ are denoted by $G_n$ and $\gamma^n$ in \cite[p.\,63]{MS}). One has 
\begin{equation}
\label{cohograss}
H^*(\BO(n),\Z/2)=\Z/2[w_1,\dots,w_n],
\end{equation}
 where the $w_i$ are the Stiefel--Whitney classes of $E(n)$ (see \cite[Theorem 7.1]{MS}). 

Let $\MO(n)$ be the Thom space of $E(n)$ (defined in \cite[p.\,205]{MS}). Denoting by $s_n\in H^n(\MO(n),\Z/2)$ the Thom class of $E(n)$, the Thom isomorphism theorem \cite[Theorem 10.2]{MS} shows that cup-product by $s_n$ induces an isomorphism $H^*(\BO(n),\Z/2)\isoto \wH^{*+n}(\MO(n),\Z/2)$. 
By Thom's definition of Stiefel--Whitney classes (see \cite[p.\,91]{MS}), one has 
\begin{equation}
\label{Thomdef}
\Sq^i(s_n)=w_i\cdot s_n.
\end{equation}
In particular, one computes that
\begin{equation}
\label{s2}
s_n^2=\Sq^n(s_n)=w_n\cdot s_n.
\end{equation}
Moreover, the Wu formula \cite[Problem 8-B]{MS} shows that 
\begin{equation}
\label{Wuformula}
\Sq^i(w_j)=\sum_{t=0}^i\binom{j+t-i-1}{t}w_{i-t}w_{j+t},
\end{equation}
where it is understood that $w_j=0$ in $H^*(\BO(n),\Z/2)$ for $j>n$.
These pieces of information and Cartan's formula entirely determine $H^{*}(\MO(n),\Z/2)$ as an algebra endowed with an action of the mod $2$ Steenrod algebra.

 Let $M$ be a compact $\ci$ manifold and let $x\in H^n(M,\Z/2)$ be a cohomology class. It follows from the Thom--Pontrjagin construction \cite[Th\'eor\`eme II.1]{Thom} that $x$ is Poincar\'e-dual to the fundamental class of a $\ci$ submanifold of codimension $n$ in $M$ if and only if there exists a continuous map $f:M\to \MO(n)$ such that $x=f^*s_n$. 

\vspace{.5em}

We also include a proof of the following well-known lemma for lack of a suitable reference.

\begin{lem}
\label{approxCW}
Let $K$ be a finite $CW$-complex.  For all $N\geq 0$, there exists a compact~$\ci$ manifold $M$ and an $N$-equivalence $f:M\to K$.
\end{lem}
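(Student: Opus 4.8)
The plan is to embed $K$ into Euclidean space, thicken it to a compact manifold with boundary, and then double it (or take a suitable tubular-neighborhood boundary) to get a closed manifold whose inclusion of a skeleton realizes the desired equivalence. Concretely, first I would use the fact that a finite $CW$-complex is homotopy equivalent to a finite simplicial complex, which embeds piecewise-linearly in some $\R^{m}$ for $m$ large. Replacing $K$ by this embedded polyhedron $P\subset\R^m$, I would take a regular neighborhood $W$ of $P$ in $\R^m$: this is a compact $\ci$ manifold with boundary that deformation retracts onto $P$, hence onto $K$. (Alternatively one can smooth $P$ or work with a handlebody thickening; the point is only to produce a compact smooth manifold-with-boundary homotopy equivalent to $K$.)

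Next I would pass from the manifold-with-boundary $W$ to a closed manifold. The cleanest route is to take $M:=\partial(W\times[0,1])$, the boundary of the product, which is a closed $\ci$ manifold of dimension $\dim W$ and which contains two copies of the interior of $W$ glued along $\partial W\times[0,1]$; equivalently, double $W$ along its boundary. There is then a natural map $M\to W\to K$ (collapse the $[0,1]$ factor and retract). By general position / handle-cancellation, the inclusion $\mathrm{int}(W)\hookrightarrow M$ is an $N$-equivalence provided $\dim W\geq$ something like $N+\dim K+1$; since $m$ was arbitrary at the start, I can simply choose $m$ large enough at the outset. Composing with the homotopy equivalence $W\simeq K$ gives the required $N$-equivalence $M\to K$.

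The main obstacle — and the step requiring the most care — is the connectivity estimate guaranteeing that $\mathrm{int}(W)\hookrightarrow M$ (equivalently $W\hookrightarrow$ its double) is an $N$-equivalence. One wants that attaching the "other half" of the double only affects homotopy groups in degrees near $\dim W$: the relative pair $(M,\mathrm{int}(W))$ should be $(\dim W - \dim K -1)$-connected or so, which one sees from the fact that the second half of the double is built from $W$ by attaching cells of dimension $\geq \dim W-\dim K$ (dual to the handles of $W$, whose cores have dimension $\leq\dim K$ since $W$ retracts to the $(\dim K)$-dimensional complex $K$). Making this precise is standard Morse-theory/handlebody bookkeeping, and once the inequality $\dim W - \dim K - 1 \geq N$ is arranged by taking the embedding dimension $m$ large, the map $M\to K$ is an isomorphism on $\pi_q$ for $q<N$ and surjective for $q=N$, as required.
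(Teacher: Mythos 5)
Your construction is correct and, in substance, very close to the paper's: the paper also replaces $K$ by a finite simplicial complex, embeds it piecewise linearly in $\R^d$ with $d\gg 0$, takes a \emph{smooth} regular neighbourhood $\Theta$ (Hirsch's theorem — this is where the smoothness you wave at with ``alternatively one can smooth $P$'' is actually supplied), and sets $M:=\partial\Theta$. Your $M=\partial(W\times[0,1])$, the double of $W$, is essentially the same manifold viewed one dimension up (a regular neighbourhood of $K\subset\R^m\subset\R^{m+1}$ can be taken to be $W\times[-1,1]$), so the real difference is in how the key connectivity statement is proved. The paper argues via the complement: $\partial\Theta\hookrightarrow\Theta\setminus K$ is a weak homotopy equivalence (a standard property of regular neighbourhoods), and $\Theta\setminus K\hookrightarrow\Theta$ is an $N$-equivalence by general position because $K$ has codimension $\geq N+1$; you instead argue that the second half of the double is attached along handles of index $\geq \dim W-\dim K$, i.e.\ cells of high dimension, so $(M,W)$ is highly connected. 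Both routes work and need the same large-codimension hypothesis; the general-position argument avoids handle bookkeeping, while yours avoids invoking the homotopy type of the complement. Two points in your write-up deserve sharpening: the bound on the handle indices of $W$ does not follow from the mere fact that $W$ \emph{retracts} to a $\dim K$-dimensional complex (that implication would require handle-trading arguments with dimension and $\pi_1$ hypotheses); it follows because $W$ is a regular neighbourhood, hence \emph{collapses} simplicially to the complex, giving one handle of index $j$ per $j$-simplex. And to conclude you should note explicitly that the $N$-equivalence of the inclusion $W\hookrightarrow M$ transfers to the projection $M\to W$ because the latter is a left inverse of the former on homotopy groups (plus the usual smoothing of corners when forming $\partial(W\times[0,1])$). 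With those standard points filled in, your proof is complete.
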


\begin{proof}
By \cite[Theorem 2C.5]{Hatcher}, we may assume that $K$ is a finite simplicial complex. Viewing $K$ as a subcomplex of a simplex of high dimension,  we may embed it piecewise linearly in
$\R^d$ for $d\gg 0$.  Let $\Theta\subset\R^d$ be a neighbourhood of~$K$ that is a smooth regular neighbourhood as in \cite[Theorem 1 (a) and (a')]{Hirsch}, 
and set $M:=\partial\Theta$.  It follows from \cite[Corollary 1 p.\,725]{HZ} 
that the inclusion $M\hookrightarrow\Theta\setminus K$ is a weak homotopy equivalence,  the inclusion $\Theta\setminus K\hookrightarrow \Theta$ is an $N$\nobreakdash-equivalence because $K$ has high codimension in $\Theta$ when $d\gg0$,  
and the inclusion $K\hookrightarrow \Theta$ is a homotopy equivalence by choice of $\Theta$. These facts prove the lemma.
\end{proof}

We now reach the goal of this paragraph.

\begin{thm}
\label{cexSWsub}
There exists a compact $\ci$ manifold $M$ and an algebraizable class $x\in H^5(M,\Z/2)$ that does not belong to the subring $A(M)$ of $H^*(M,\Z/2)$ generated by Stiefel--Whitney classes of vector bundles on $M$ and Poincar\'e-duals of fundamental classes of $\ci$ submanifolds of $M$.
\end{thm}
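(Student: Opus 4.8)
The plan is to produce a compact $\ci$ manifold $M$, a class $z\in A(M)$, and a mod $2$ Steenrod operation $\theta$ of degree $5-\deg(z)$ so that $x:=\theta(z)$ — which is algebraizable by Corollary \ref{BHSteencor}, since $z$ is — does not lie in $A(M)$. In other words, one exhibits a situation in which $A(M)$ fails to be stable under the action of the Steenrod algebra.

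First I would reduce the statement to topology. Apply Lemma \ref{approxCW} to a finite skeleton of $\MO(3)$ to obtain a compact $\ci$ manifold $M$ and an $N$-equivalence $\chi\colon M\to\MO(3)$ with $N$ large, say $N=10$; then $\chi^{*}$ identifies $H^{q}(\MO(3),\Z/2)$ with $H^{q}(M,\Z/2)$ for $q\leq 9$, compatibly with cup products and with the $\cA_{2}$-action. By the Thom isomorphism, $\wH^{*}(\MO(3),\Z/2)=\Z/2[w_{1},w_{2},w_{3}]\cdot s_{3}$ with $w_{i}=w_{i}(E(3))$, so that $H^{1}(M,\Z/2)=H^{2}(M,\Z/2)=0$, $H^{3}(M,\Z/2)=\Z/2\langle s\rangle$ with $s:=\chi^{*}s_{3}$, and $H^{5}(M,\Z/2)=\Z/2\langle\chi^{*}(w_{1}^{2}s_{3}),\chi^{*}(w_{2}s_{3})\rangle$. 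By the Thom--Pontrjagin construction \cite{Thom}, $s$ is Poincar\'e-dual to the fundamental class of a codimension $3$ submanifold of $M$, so $s\in A(M)$; hence $x:=\Sq^{2}(s)$ is algebraizable, and $x=\chi^{*}(w_{2}s_{3})\neq 0$ by \eqref{Thomdef}.

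Next I would trap $A(M)\cap H^{5}(M,\Z/2)$. Since $H^{1}(M,\Z/2)=H^{2}(M,\Z/2)=0$, every topological real vector bundle $E$ on $M$ has $w_{1}(E)=w_{2}(E)=0$, and the Wu formula \eqref{Wuformula} (whence $w_{3}=\Sq^{1}w_{2}+w_{1}w_{2}$ and $w_{5}=\Sq^{2}w_{3}+w_{1}w_{4}+w_{2}w_{3}$) forces $w_{3}(E)=w_{5}(E)=0$; moreover no product of positive-degree classes of $M$ reaches degree $5$, because $H^{1}=H^{2}=0$. Therefore $A(M)\cap H^{5}(M,\Z/2)$ is spanned by classes Poincar\'e-dual to codimension $5$ submanifolds of $M$, i.e.\ (by \cite{Thom}) by classes of the form $g^{*}s_{5}$ with $g\colon M\to\MO(5)$. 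It thus remains to show that $x=\chi^{*}(w_{2}s_{3})$ is not of this form.

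The last step is the crux. Since the $5$-th Postnikov truncation of $\MO(5)$ is $K(\Z/2,5)$ with fundamental class $s_{5}$, the classes $g^{*}s_{5}$ are exactly the $z\in H^{5}(M,\Z/2)$ that lift successively through the Postnikov tower of $\MO(5)$; the first obstruction to doing so is $z^{*}(k_{1})$, where $k_{1}\in H^{7}\big(K(\Z/2,5),\pi_{6}(\MO(5))\big)$ is the first $k$-invariant of $\MO(5)$. I would compute $\pi_{6}(\MO(5))$ and $k_{1}$ from the $\cA_{2}$-module structure of $\wH^{*}(\MO(5),\Z/2)=\Z/2[w_{1},\dots,w_{5}]s_{5}$ (in which $\Sq^{i}s_{5}=w_{i}s_{5}$ and $s_{5}^{2}=w_{5}s_{5}$), extract from it a cohomology operation $\Psi$ vanishing on every $g^{*}s_{5}$, and then check by a direct Cartan-formula computation — using $\Sq^{i}s_{3}=w_{i}s_{3}$, $s_{3}^{2}=w_{3}s_{3}$, the Wu formula \eqref{Wuformula} and the Adem relations, e.g.\ $\Sq^{2}\Sq^{2}(s)=\Sq^{3}\Sq^{1}(s)=\chi^{*}\big((w_{1}w_{3}+w_{1}^{2}w_{2})s_{3}\big)$ — that $\Psi(\chi^{*}(w_{2}s_{3}))\neq 0$ in $H^{*}(M,\Z/2)$; this forces $x\notin A(M)$ and proves the theorem. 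The main obstacle is precisely this identification of $\Psi$: one must understand the homotopy type of $\MO(5)$ just above its bottom cell — equivalently, the Steenrod-module and $k$-invariant data of $H^{*}(\MO(5),\Z/2)$ in low degrees — finely enough to produce an operation detecting $\Sq^{2}(s)$; an alternative is to argue geometrically, analyzing the normal bundle $\nu_{Z}$ of the codimension $3$ submanifold $Z$ with $\eta_{Z}=s$ and showing that $\Sq^{2}(\eta_{Z})=(i_{Z})_{*}w_{2}(\nu_{Z})$ cannot be realized by any codimension $5$ submanifold of $M$. All remaining steps are routine bookkeeping with the Thom-space cohomology recalled before the statement, Cartan's formula, the Wu formula, Lemma \ref{approxCW}, and Corollary \ref{BHSteencor}.
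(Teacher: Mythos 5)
Your first half coincides with the paper's proof: same manifold $M$ approximating $\MO(3)$ via Lemma \ref{approxCW}, same class $x=\Sq^2(\chi^*s_3)=\chi^*(w_2s_3)$, same use of Corollary \ref{BHSteencor}, and your observation that every bundle on $M$ has $w_5(E)=0$ (via $w_1=w_2=0$ and the Wu formula) is a clean variant of the paper's elimination of Stiefel--Whitney classes. But there are two genuine gaps. First, the reduction ``it remains to show that $x$ is not of the form $g^*s_5$'' is too weak: $A(M)\cap H^5(M,\Z/2)$ is the \emph{span} of duals of codimension-$5$ submanifolds, and the set of realizable classes is not known to be closed under addition. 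Since $H^5(M,\Z/2)=\{0,x,y,x+y\}$ with $y:=\chi^*((w_1^2+w_2)s_3)$, the hypothesis $x\in A(M)$ only yields that either $x$ is realizable or both $y$ and $x+y$ are; so you must also rule out realizability of $y$ (the paper runs its obstruction for both $x$ and $y$ for exactly this reason).

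Second, and more seriously, the crux --- producing an obstruction $\Psi$ that vanishes on every $g^*s_5$ but not on $x$ --- is not carried out, and the route you sketch would not produce one. One has $\pi_6(\MO(5))\cong\mathfrak{N}_1=0$, so the ``first $k$-invariant'' you propose to compute lives in a zero group; the next one, in $H^8(K(\Z/2,5),\pi_7(\MO(5)))$ with $\pi_7(\MO(5))\cong\mathfrak{N}_2=\Z/2$, is also zero, because $\Sq^3(s_5)=w_3s_5$ and $\Sq^2\Sq^1(s_5)=(w_1^3+w_1w_2)s_5$ are nonzero and independent in $H^8(\MO(5),\Z/2)$, so no degree-$3$ operation annihilates $s_5$; and $\pi_8(\MO(5))\cong\mathfrak{N}_3=0$. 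Hence there is no primary (linear) obstruction in the accessible range, and the needed obstruction cannot be a Steenrod operation applied to $z$ alone. The paper's mechanism is instead a \emph{non-linear} relation exploiting $s_5^2=w_5s_5$ from (\ref{s2}): the identity $\Sq^2\Sq^1(s_5)\cdot s_5^2+\Sq^1(s_5)^3+\Sq^1(s_5)\cdot\Sq^2(s_5)\cdot s_5=0$ in $H^{18}(\MO(5),\Z/2)$, which pulls back to any realizable class and is checked by direct computation (via (\ref{Thomdef}), (\ref{s2}), (\ref{Wuformula})) to fail for both $x$ and $y$ in $H^{18}(M,\Z/2)$. This also shows your choice $N=10$ in Lemma \ref{approxCW} is insufficient: one needs $\chi^*$ (in the paper, $f^*$) to be injective up to degree $18$, since the distinguishing relation lives there.
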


\begin{proof}
Let $K$ be the Thom space of the tautological rank $n$ vector bundle on $\Grass(3,\R^{3+m})$.
Comparing \cite[Theorem 7.1 and Problem~7-B]{MS} and applying the Thom isomorphim theorem \cite[Theorem 10.2]{MS} shows that the restriction map $H^*(\MO(3),\Z/2)\to H^*(K,\Z/2)$ is an isomorphism in degree $\leq 18$ if $m\gg0$. As $K$ is a finite CW-complex, it follows from Lemma \ref{approxCW}
that there exists a compact~$\ci$ manifold~$M$ and a continuous map $f:M\to\MO(3)$ such that the pull-back map $f^*:H^*(\MO(3),\Z/2)\to H^*(M,\Z/2)$ is an isomorphism in degree $\leq 18$.

By the Thom--Pontrjagin construction, the class $f^*s_3$ is Poincar\'e-dual to the fundamental class of a $\ci$ submanifold of~$M$, hence is algebraizable by \cite[Corollary 4.5]{BT} or \cite[Theorem 1.1]{AKrelNash}. 
It follows Corollary \ref{BHSteencor} that the class $x:=f^*(w_2\cdot s_3)\in H^5(M,\Z/2)$, 
which is equal to $\Sq^2(f^*s_3)$ by (\ref{Thomdef}), is algebraizable. 

Assume for contradiction that $x\in A(M)$.
As $H^q(M,\Z/2)=0$ for $q\in\{1,2\}$
and $H^5(M,\Z/2)$ is generated by $x=f^*(w_2\cdot s_3)$ and by $y:=f^*((w_1^2+w_2)\cdot s_3)$, we deduce that there exists $z\in\{x,y\}$ that is either the fifth Stiefel--Whitney class of a vector bundle $E$ on $M$, or Poincar\'e-dual to the fundamental class of a $\ci$ submanifold of codimension $5$ in $M$.
In the first case, one has
$$\Sq^1(z)=\Sq^1(w_5(E))=w_1(E)\cdot w_5(E)=0$$
in $H^{6}(M,\Z/2)$,
where the second equality follows from (\ref{Wuformula}) and the third equality holds since $H^1(M,\Z/2)=0$. One then computes (using the Cartan formula, (\ref{Thomdef}) and (\ref{Wuformula})) that $\Sq^1(x)=f^*(w_3\cdot s_3)$ and $\Sq^1(y)=f^*((w_1^3+w_3)\cdot s_3)$. This is a contradiction since both classes are non-zero by the computation of $H^{*}(\MO(3),\Z/2)$ recalled above and by the injectivity of $f^*$ in degree $6$.

In the second case, the Thom--Pontrjagin construction shows the existence of a continuous map $g:M\to\MO(5)$ with $z=g^*s_5$. Using the Cartan formula, (\ref{Thomdef}) and (\ref{Wuformula}), one computes that
$$\Sq^2\Sq^1(s_5)\cdot s_5^2=\Sq^2(w_1\cdot s_5)\cdot s_5^2=w_1^3\cdot s_5^3+w_1w_2\cdot s_5^3=\Sq^1(s_5)^3+\Sq^1(s_5)\cdot\Sq^2(s_5)\cdot s_5.$$
Pulling back this equality by the map $g$, we deduce that 
\begin{equation}
\label{identityinMO5}
\Sq^2\Sq^1(z)\cdot z^2+\Sq^1(z)^3+\Sq^1(z)\cdot \Sq^2(z)\cdot z=0
\end{equation}
in $H^{18}(M,\Z/2)$.
Using the Cartan formula, (\ref{Thomdef}), (\ref{s2}) and (\ref{Wuformula}), one shows that 
\begin{equation*}
\begin{alignedat}{5}
\Sq^2\Sq^1(x)&\cdot x^2+\Sq^1(x)^3+\Sq^1(x)\cdot \Sq^2(x)\cdot x\\
&=f^*(w_1^2w_3s_3\cdot(w_2s_3)^2+(w_3s_3)^3+w_3s_3\cdot(w_1^2w_2s_3+w_1w_3s_3)\cdot w_2s_3)\\
&=f^*((w_1w_2w_3^4+w_3^5)\cdot s_3),
\end{alignedat}
\end{equation*}
and a similar lengthier computation shows that
\begin{equation*}
\begin{alignedat}{5}
\Sq^2\Sq^1(y)\cdot y^2&+\Sq^1(y)^3+\Sq^1(y)\cdot \Sq^2(y)\cdot y\\
&=f^*((w_1^3w_2^3w_3^2+w_1^2w_2^2w_3^3+w_1w_2w_3^4+w_3^5)\cdot s_3).
\end{alignedat}
\end{equation*}
As both these classes are non-zero by the computation of $H^{*}(\MO(3),\Z/2)$ recalled above and by the injectivity of $f^*$ in degree $18$, this contradicts (\ref{identityinMO5}).
\end{proof}

 \bibliographystyle{myamsalpha}
\bibliography{Steenrod}

\end{document}